\documentclass[12pt,a4paper]{article}
\usepackage{a4wide,amsfonts,amsmath,latexsym,amsthm,amssymb,euscript,eufrak,graphicx,units,mathrsfs,setspace,stmaryrd,dsfont}

\usepackage[usenames,dvipsnames,table]{xcolor}

\definecolor{darkblue}{rgb}{0.3,0.3,0.7}

\usepackage[colorlinks=true, urlcolor=blue,linkcolor=blue, citecolor=blue]{hyperref}




\definecolor{ForestGreen}{RGB}{34,139,34}
\definecolor{mauve}{rgb}{0.7,0,0.43}
\definecolor{dkgreen}{rgb}{0,0.6,0}
\definecolor{darkgreen}{rgb}{0,0.6,0}
\definecolor{darkorange}{rgb}{1.0, 0.55, 0.0}
\definecolor{lightblue}{rgb}{0,0.2,0.5}
\definecolor{blue1}{rgb}{0,0.1,0.9}

\definecolor{lightblue}{rgb}{0,0.2,0.5}

\usepackage{systeme}
\usepackage{float}
\newfloat{figure}{H}{lof}
\floatname{figure}{\figurename}

\textwidth15.3cm
\textheight21.5cm
\oddsidemargin0.5cm
\evensidemargin.5cm
\topmargin1cm
\headheight0cm
\headsep0cm
\baselineskip1in
\parindent0.25in

\usepackage[french,english]{babel}
\usepackage[T1]{fontenc}
\usepackage[utf8]{inputenc}

\DeclareMathAlphabet{\eufrak}{U}{}{}{}  
\SetMathAlphabet\eufrak{normal}{U}{euf}{m}{n}
\SetMathAlphabet\eufrak{bold}{U}{euf}{b}{n}

\newtheorem{prop}{Proposition}[section]

\newtheorem{theorem}[prop]{Theorem}
\newtheorem{lemma}[prop]{Lemma}

\newtheorem{assumption}[prop]{Assumption}

\newtheorem{remark}[prop]{Remark}

\newtheorem{definition}[prop]{Definition}
\newtheorem{notation}[prop]{Notation}

\numberwithin{equation}{section}

\def\E{\mathbb{E}}
\def\P{\mathbb{P}}
\def\real{\mathbb{R}}

\def\F{\mathcal{F}}
\def\1{\textbf{1}}

\newcommand{\eps}{\varepsilon}

\def\E{\mathbb E}
\def\F{{\cal F}}

\def\P{\mathbb P}


\makeatletter
\newcommand*\rel@kern[1]{\kern#1\dimexpr\macc@kerna}
\newcommand*\widebar[1]{%
  \begingroup
  \def\mathaccent##1##2{%
    \rel@kern{0.8}%
    \overline{\rel@kern{-0.8}\macc@nucleus\rel@kern{0.2}}%
    \rel@kern{-0.2}%
  }%
  \macc@depth\@ne
  \let\math@bgroup\@empty \let\math@egroup\macc@set@skewchar
  \mathsurround\z@ \frozen@everymath{\mathgroup\macc@group\relax}%
  \macc@set@skewchar\relax
  \let\mathaccentV\macc@nested@a
  \macc@nested@a\relax111{#1}%
  \endgroup
}
\makeatother


\def\cal#1{\mathcal{#1}}

\DeclareSymbolFontAlphabet{\mathrsfs}{rsfs}

\author{
Mahmoud Khabou\footnote{INSA de Toulouse, IMT UMR CNRS 5219, Universit\'e de Toulouse, 135 avenue de Rangueil 31077 Toulouse Cedex 4 France. \; Email: \texttt{mahmoud.khabou@insa-toulouse.fr}}
\and Nicolas Privault \footnote{School of Physical and Mathematical Sciences, Nanyang Technological University, 21 Nanyang Link, 637371 Singapre. \; Email: \texttt{nprivault@ntu.edu.sg}}
\and Anthony R\'eveillac\footnote{INSA de Toulouse, IMT UMR CNRS 5219, Universit\'e de Toulouse, 135 avenue de Rangueil 31077 Toulouse Cedex 4 France. \; Email: \texttt{anthony.reveillac@insa-toulouse.fr}}
}

\title{\huge Normal approximation of compound Hawkes functionals}

\begin{document}

\maketitle

\allowdisplaybreaks

\baselineskip0.6cm

\vspace{-0.6cm}

\begin{abstract}
  We derive quantitative bounds in the Wasserstein distance
  for the approximation of stochastic integrals with respect to
  Hawkes processes by a normally distributed random variable. 
  In the case of deterministic and non-negative integrands,
  our estimates involve only the third moment of
  integrand in addition to a variance term
  using a square norm of the integrand.
  As a consequence, we are able to observe a ``third moment phenomenon''
  in which the vanishing of the first cumulant can lead to faster
  convergence rates. 
  Our results are also applied to
  compound Hawkes processes,
  and improve on the current literature
  where estimates may not converge to zero in
  large time,
  or have been obtained only for specific
  kernels such as the exponential or Erlang kernels.
\end{abstract}

\noindent {\bf Key words:} 
Hawkes processes;
Stein method;
normal approximation;
Malliavin calculus. 
\\
  {\em Mathematics Subject Classification (2020):}
  60G55,
  60F05,
  60G57,
  60H07.
\baselineskip0.65cm
 
\section{Introduction}
Nourdin and Peccati \cite{Nourdin_Peccati} opened the way to a new methodology mixing Stein's method and the Malliavin calculus, to provide bounds on the distance between the distribution of a functional of a Gaussian field and a target Gaussian distribution. This analysis, which relies on a specific Gaussian structure, has been successfully transferred to the Gaussian approximation of Poisson functionals in \cite{Peccati_2010}.
Since then, several developments of the initial result of \cite{Peccati_2010} have been
obtained, for instance, Stein Gaussian approximation bounds have been obtained in \cite{Privault} in terms of the third cumulants for Poisson functionals
 expressed as the divergence of an adapted process with respect to an homogeneous Poisson process.
 Another development important for our analysis has been presented in \cite{torrisi} for counting processes with stochastic intensity (including the Hawkes process) using the so-called Poisson imbedding representation, see \cite{Bremaud_Massoulie}.
 This technique, also known as the ``Thinning Algorithm'', 
 allows one to represent a counting counting process and
 its intensity process as a solution to an SDE driven by an auxiliary Poisson
 random measure, and
 to adapted the general methodology of \cite{Peccati_2010}
 to this framework.
 Following here the path of \cite{torrisi}
 in the case of a linear Hawkes process $H$,
 i.e. a counting process with intensity process $\lambda:=(\lambda_t)_{t\geq 0}$ given as 
$$ \lambda_t = \mu +\int_{(0,t)} \phi(t-s) dH_s, $$
 with $\mu >0$, $\phi : \real_+\to \real_+$ and $\|\phi\|_1 < 1$, 
 a specific Malliavin calculus for Hawkes processes have been developed in \cite{Hillairet_Reveillac_Rosenbaum,HHKR},
 based on Relation~\eqref{eq:IBP} below
 on the simplification of the Malliavin integration by parts. 
 As a consequence, a new bound has been obtained in \cite{HHKR} for the Gaussian approximation of Hawkes functionals. 

 \medskip

 In \cite{Bacry_et_al_2013}, a functional convergence result
 has been obtained 
 for linear Hawkes processes, implying the (non-quantitative)
 convergence in distribution 
\begin{equation}
\nonumber 
F_T:=\frac{H_{T} - \int_0^{T} \lambda_s ds}{\sqrt{T}} \underset{T\to+\infty}{\overset{\mathcal{L}}{\longrightarrow}} \mathcal{N}(0,{\sigma}^2),
\end{equation}
 see Lemma~7 therein, 
 with $\sigma^2:=\mu / ( 1-\|\phi\|_1 )$.

 \medskip

 In this paper,
 we propose to quantify this convergence in the Wasserstein distance. 
 In Theorem~3.1 of \cite{torrisi},
 see also Relations~(5.2) and (5.4) therein,
 the estimates 
 $$ d_W\left(
 \frac{H_{T} - \int_0^{T} \lambda_s ds}{\sqrt{T}}
 ,
 \mathcal{N}(0,{\sigma}^2)\right) \leq B_T, \ \!  \textrm{ with } \ \!  B_T \geq
 \sqrt{\frac{8}{\pi}}
 \|\phi\|_1
 \frac{ 2-\|\phi\|_1}{1-\|\phi\|_1}
 $$
 have been derived, however
 they do not converge to $0$ as $T$ tends to $+\infty$,
 see Remark~\ref{rem:comment1} below.
 Related bounds have been derived independently
 in \cite[Theorem~3.4]{HHKR} for Hawkes
 processes, with in particular 
\begin{equation}
\nonumber 
d_W\left(
\frac{H_{T} - \int_0^{T} \lambda_s ds}{\sqrt{T}}
,
\mathcal{N}(0,{\sigma}^2)\right) \leq \frac{C_{\mu,\phi}}{\sqrt{T}} + R_T,
\end{equation}
see Theorems~3.10 and 3.12 therein, where $R_T$ is an additional term involving the Malliavin derivative of $F_T$,
see also Proposition~\ref{prop:TempDH} below.

\medskip

 In case $\phi$ is the exponential kernel
 $\phi(x):=\alpha e^{-\beta x}$, $\alpha < \beta$,
or the Erlang kernel $\phi(x):= \alpha x e^{-\beta x}$, $\alpha < \beta^2$,
the remainder term $R_T$ can be bounded to obtain 
 more accurate bounds of the form  
\begin{equation}
\label{eq:temp2intro}
d_W\left(
\frac{H_{T} - \int_0^{T} \lambda_s ds}{\sqrt{T}}
,
\mathcal{N}(0,{\sigma}^2) \right) \leq \frac{\widetilde{C}_{\mu,\phi}}{\sqrt{T}}. 
\end{equation}
\noindent
In this paper, we extend those results by deriving bounds of the form \eqref{eq:temp2intro} for Hawkes processes with general
kernel $\phi$ satisfying the condition $\|\phi\|_1<1$,
see Theorem~\ref{th:main2}, 
where $C_{\mu,\phi}>0$ is a constant and $\sigma^2$ an explicit asymptotic variance depending on $\mu,\phi$.

\medskip

For this, in Theorem~\ref{th:main1} we improve the bounds of \cite{torrisi} and \cite{HHKR} for Hawkes functionals of the form
$\int_0^\infty z(t) (dH_t-\lambda_t dt)$, where $z(t)$ is a deterministic function. 
 In particular,
 in Theorem~\ref{th:main1}-$(ii)$ 
 we provide an estimate involving only the third moment of
 $\int_0^\infty z(t) (dH_t-\lambda_t dt)$
 when $z(t)$ is deterministic and non-negative,  
 and an estimate on a modified second moment of $z(t)$ where the Malliavin derivative
 is not involved, see \eqref{eq:rela2} and the
 discussion in Remark \ref{rem:comment2}.

 \medskip

 In addition, these results are presented for a generalization of the Hawkes process,
 that is the compound Hawkes process $S$ given by 
$$
 S_t := \sum_{i=1}^{H_t} X_i, \quad t \geq 0,
 $$
 where the random variables $(X_i)_i$ are independent and identically distributed
 (i.i.d.) square integrable random variables independent of $H$.
 Furthermore, in the spirit of Theorem~3.7 in \cite{ChenGolsteinShao}, in Theorem~\ref{th:fast} we obtain faster rates of convergence for the compound Hawkes process
   $S$
   when the third cumulant of the
   jump sizes
   on the random variables $(X_i)_i$
   vanishes
   in the framework of the ``third moment phenomenon'',
   see Section~4.8 of \cite{ChenGolsteinShao}. Finally, we provide an alternative version of our quantitative limit theorems as Theorem \ref{th:alternativ}.

   \medskip
   
We proceed as follows. First, in Section~\ref{section:preliminaires} we present the main elements of stochastic analysis on the Poisson space and we recall the approach developed in \cite{Hillairet_Reveillac_Rosenbaum,HHKR} regarding the linear Hawkes process. Main results are collected in Section~\ref{section:main}. Proofs and technical lemmata are collected in Section~\ref{section:proofs}.

\section{Notations and preliminaries}
\label{section:preliminaires}

For $E$ a topological space, we set $\mathcal B(E)$ the $\sigma$-algebra of Borel sets. We denote by $dt$ the Lebesgue measure on $(\real, \mathcal B(\real))$.

\subsection{Stochastic analysis on the Poisson space}
The notation and definitions stated in this section can be found for instance in \cite{Picard_French_96} or \cite{Privault_LectureNotes}. 
Let $\nu$ be a Borel measure on $\real$ with $\nu(\real)=1$ and $\nu(\{0\})=0$,
and consider the space of configurations
$$ \Omega^N:=\left\{\omega^N=\sum_{i=1}^{n} \delta_{(t_{i},\theta_i,x_i)}, \ \!  0=t_0 < t_1 < \cdots < t_n, \ \!  (\theta_i,x_i) \in \real_+\times \real,  \ \!  n\in \mathbb{N}\cup\{+\infty\} \right\}.
$$
Each path of a counting process is represented as an element $\omega^N$ in $\Omega^N$ which is a $\mathbb N$-valued measure on $\mathbb{R}_+^2\times \real$. Let $\mathcal F_\infty^N$ be the $\sigma$-field associated to the vague topology on $\Omega^N$, and $\P^N$ the Poisson measure under which the counting process $N$ defined as 
$$ N([0,t]\times[0,\theta]\times(-\infty,y])(\omega):=\omega([0,t]\times[0,\theta]\times(-\infty,x]), \quad (t,\theta,x) \in \real_+^2\times \real,$$
    is an homogeneous Poisson process with intensity measure $dt \otimes d\theta \otimes \nu$, that is, for any $(t,\theta,x) \in [0,T]\times \real_+\times \real$, $N([0,t]\times[0,\theta]\times(-\infty,x])$ is a Poisson random variable with intensity $ \theta \, t \, \nu((-\infty,x])$.
        We also let $\mathbb F^N:=(\F_t^N)_{t\geq 0}$ denote the natural history of $N$,
        that is
        $$
        \mathcal{F}_t^N:=\sigma(N( \mathcal{T} \times B), \ \!  \mathcal{T} \subset \mathcal{B}([0,t]), \ \!  B \in \mathcal{B}(\real_+\times \real)),
        \qquad t\geq 0, 
        $$
 hence $\mathcal F_\infty^N$ coincides with $\lim_{t\to+\infty} \mathcal F_t^N$. The expectation with respect to $\P^N$ is denoted by $\E[\ \! \cdot \ \! ]$, and the conditional expectation knowing $\mathcal {F}^N_t$ is denoted by $\E_t[\ \! \cdot \ \! ]$. 

\medskip

Next, we introduce the stochastic integral with respect to the Poisson point process $N$,
which will be related to the gradient operator $D$ defined below. 
\begin{definition}
We set
\begin{equation}
\nonumber 
\mathcal P^N_2:=\left\{\rho:=(\rho_{(t,\theta,x)})_{(t,\theta,x)\in \real_+^3}\ \!  (\mathcal F^N_t)_{t\geq 0}\textrm{-predictable with } \int_{\real_+^3} |\rho_{(t,\theta,x)}|^2 dtd\theta \nu(dx)<+\infty\right\}.
\end{equation}
For $\rho:=(\rho_{(t,\theta,x)})_{(t,\theta,x)\in \real_+^3} \in \mathcal P^N_2$, we set the divergence operator
$$ \delta^N(\rho):=\int_{\real_+^2\times \real} \rho_{(t,\theta,x)} \left(N(dt,d\theta,dx)-dt d \theta \nu(dx)\right).$$
\end{definition}
\noindent
 Next, we introduce the Malliavin derivative first with respect to the Poisson point process $N$, using the adding point operator defined below.
 For any $A \in \mathcal{B}(\mathbb{R}_+\times \real_+\times \real)$, we let 
$$ \textbf{1}_A (t,\theta,x):=\left\lbrace \begin{array}{l} 1, \quad \textrm{if } (t,\theta,x)\in A, \\0, \quad \textrm{otherwise.}\end{array}\right. $$

\begin{definition}[Adding point operator]
We define for $(t,\theta,x)$ in $\mathbb{R}_+\times \real_+\times \real$ the measurable maps 
$$ 
\begin{array}{lll}
\eps_{(t,\theta,x)}^+ : &\Omega^N \to \Omega^N\\
&\omega \mapsto  \eps_{(t,\theta,x)}^+(\omega),
\end{array}
$$
where we let 
$$(\eps_{(t,\theta,x)}^+(\omega))(A) := \omega(A \setminus {(t,\theta,x)}) + \textbf{1}_A(t,\theta,x),
\quad
A \in \mathcal{B}(\mathbb{R}_+\times \real_+\times \real). 
$$
\end{definition}
\noindent
 We note that for any 
 $\mathcal F_t^N$-measurable random variable $F$, $t\geq 0$, we have
  $$ F\circ\eps_{(v,\theta,x)}^+ = F, \quad \P-a.s., 
  $$
 for all $v>t$ and $(\theta,x)\in \real_+\times \real$. 

\begin{definition}[Malliavin derivative]
  For $F$ in $L^2\big(\Omega,\mathcal F_\infty^N,\P\big)$, we define
  the Malliavin derivative $D F$ of $F$ as 
$$ D_{(t,\theta,x)} F := F\circ \eps_{(t,\theta,x)}^+ - F, \quad (t,\theta,x) \in \real_+^2\times \real.  $$
\end{definition}
\noindent
We conclude this section with the integration by parts formula on the Poisson space,
 see \textit{e.g.} \cite{Picard_French_96} or \cite{Privault_LectureNotes}. 
\begin{prop}
Let $F$ be in $L^2\big(\Omega,\mathcal F_\infty^N,\P\big)$ and $\rho$ be in $\mathcal P^N_2$. We have that 
\begin{equation}
\label{eq:IBPPoisson}
\E\big[F \delta^N(\rho)\big] = \E\left[\int_{\real_+^2\times \real} \rho_{(t,\theta,x)} D_{(t,\theta,x)} F dt d\theta \nu(dx)\right].
\end{equation}
\end{prop}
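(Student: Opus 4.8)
The strategy is to derive the integration by parts formula \eqref{eq:IBPPoisson} from the Mecke equation for the Poisson random measure $N$. Introducing the shorthand $\mu^N(dt,d\theta,dx):=dt\,d\theta\,\nu(dx)$ for the intensity measure, I would first split the divergence into its pure-jump part and its compensator,
\[
\E\big[F\,\delta^N(\rho)\big] = \E\left[\int_{\real_+^2\times\real} F\,\rho_{(t,\theta,x)}\,N(dt,d\theta,dx)\right] - \E\left[\int_{\real_+^2\times\real} F\,\rho_{(t,\theta,x)}\,\mu^N(dt,d\theta,dx)\right],
\]
so that only the first term has to be transformed. Applying the Mecke formula to the jointly measurable integrand $(t,\theta,x,\omega)\mapsto F(\omega)\,\rho_{(t,\theta,x)}(\omega)$ yields
\[
\E\left[\int_{\real_+^2\times\real} F\,\rho_{(t,\theta,x)}\,N(dt,d\theta,dx)\right] = \E\left[\int_{\real_+^2\times\real} \big(F\,\rho_{(t,\theta,x)}\big)\circ\eps_{(t,\theta,x)}^+\,\mu^N(dt,d\theta,dx)\right].
\]

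The key point is then that $\rho_{(t,\theta,x)}\circ\eps_{(t,\theta,x)}^+ = \rho_{(t,\theta,x)}$ for $\mu^N\otimes\P$-almost every $(t,\theta,x,\omega)$: since $\rho\in\mathcal P^N_2$ is $(\mathcal F^N_t)_{t\geq 0}$-predictable, $\rho_{(t,\theta,x)}$ is measurable with respect to the $\sigma$-field generated by the restriction of $N$ to $[0,t)\times\real_+\times\real$, on which the adding point operator $\eps_{(t,\theta,x)}^+$ acts as the identity. Hence $\big(F\,\rho_{(t,\theta,x)}\big)\circ\eps_{(t,\theta,x)}^+ = \big(F\circ\eps_{(t,\theta,x)}^+\big)\,\rho_{(t,\theta,x)}$, and substituting this back and recombining with the compensator term gives
\[
\E\big[F\,\delta^N(\rho)\big] = \E\left[\int_{\real_+^2\times\real} \rho_{(t,\theta,x)}\,\big(F\circ\eps_{(t,\theta,x)}^+ - F\big)\,dt\,d\theta\,\nu(dx)\right] = \E\left[\int_{\real_+^2\times\real} \rho_{(t,\theta,x)}\,D_{(t,\theta,x)}F\,dt\,d\theta\,\nu(dx)\right],
\]
which is precisely \eqref{eq:IBPPoisson}.

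To make this rigorous I would first establish the identity on a dense class of test objects — $F$ of the form $f(N(A_1),\dots,N(A_k))$ with $f$ bounded and $A_1,\dots,A_k\in\mathcal B(\real_+^2\times\real)$, and $\rho$ a simple $(\mathcal F^N_t)_{t\geq 0}$-predictable process with bounded support — for which $\int_{\real_+^2\times\real}|F\,\rho_{(t,\theta,x)}|\,N(dt,d\theta,dx)$ and $\int_{\real_+^2\times\real}|F\,\rho_{(t,\theta,x)}|\,\mu^N(dt,d\theta,dx)$ are both integrable, so that the jump/compensator decomposition of $\delta^N(\rho)$ above is legitimate and Fubini's theorem applies freely. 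The passage to arbitrary $F\in L^2(\Omega,\mathcal F_\infty^N,\P)$ and $\rho\in\mathcal P^N_2$ is then a routine density and $L^2$-continuity argument — the left-hand side being controlled by $\|F\|_{L^2}\|\delta^N(\rho)\|_{L^2}=\|F\|_{L^2}\|\rho\|_{L^2(\mu^N)}$ through the isometry $\E[\delta^N(\rho)^2]=\|\rho\|_{L^2(\mu^N)}^2$ valid for predictable integrands — and is carried out in \cite{Picard_French_96,Privault_LectureNotes}. The main obstacle is thus essentially contained in the Mecke formula itself and in the integrability bookkeeping needed to legitimize the jump/compensator splitting before passing to the limit; once the predictability reduction $\rho_{(t,\theta,x)}\circ\eps_{(t,\theta,x)}^+=\rho_{(t,\theta,x)}$ along the diagonal is available, the rest is routine.
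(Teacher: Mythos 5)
The paper gives no proof of this proposition and simply refers to \cite{Picard_French_96,Privault_LectureNotes}, and the argument in those references is precisely the Mecke-formula derivation you give, so your approach matches the intended one. Your core computation is correct: splitting $\delta^N(\rho)$ into jump and compensator parts, applying Mecke to the jump part, and invoking predictability of $\rho$ so that $\rho_{(t,\theta,x)}\circ\eps_{(t,\theta,x)}^+ = \rho_{(t,\theta,x)}$ on the diagonal (since adding a point at $(t,\theta,x)$ leaves the restriction of $N$ to $[0,t)\times\real_+\times\real$ unchanged) is exactly the right reduction.

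One remark on the extension step. The Cauchy--Schwarz bound $|\E[F\,\delta^N(\rho)]|\le\|F\|_{L^2}\|\rho\|_{L^2(\mu^N)}$ that you use via the It\^o isometry only controls the left-hand side. For the right-hand side, convergence of $\E\big[\int\rho\,D F_n\,d\mu^N\big]$ along an approximating sequence $F_n\to F$ in $L^2$ does not follow from $L^2$-continuity of $F\mapsto F$ alone, since the finite-difference operator $D$ is not $L^2$-bounded; one must either restrict to $F$ for which $\rho\,DF\in L^1(\mu^N\otimes\P)$ (which is the implicit integrability hypothesis needed for the right-hand side of \eqref{eq:IBPPoisson} to make sense), or observe that for fixed nice $\rho$ the map $F\mapsto\E\big[\int\rho\,DF\,d\mu^N\big]$ is represented, via Mecke applied in the other direction, as pairing $F$ against an $L^2$ element. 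This is indeed how the cited references close the loop, so your plan is sound; just be aware that the continuity you invoke is on the divergence side, and the derivative side needs its own justification.
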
 

\subsection{Stochastic analysis for the compound Hawkes process}
 We first recall the definition of a Hawkes process.
\begin{definition}[Standard Hawkes process, \cite{Hawkes}]
\label{def:standardHawkes}
Let $\mu>0$ and $\phi:\real_+ \to \real_+$ be a bounded non-negative function with $\|\phi\|_1 :=\int_0^\infty \phi(u) du<1$. The standard Hawkes process $H:=(H_t)_{t \geq 0}$ with parameters $\mu$ and $\phi$ is the counting process such that   
\begin{itemize}
\item[(i)] $H_0=0$, $\P$-$a.s.$,
\item[(ii)] its ($\mathbb{F}^N$-predictable) intensity process is given by
\begin{equation}
\label{eq:lambda}
\lambda_t:=\mu + \int_{(0,t)} \phi(t-s) dH_s, \quad t\geq 0,
\end{equation}
that is for any $0\leq s \leq t $ and $A \in \mathcal{F}^N_s$,
$$
\E\left[\textbf{1}_A (H_t-H_s) \right] = \E\left[\int_{(s,t]} \textbf{1}_A \lambda_r dr \right].
$$
\end{itemize}
\end{definition}
\noindent 
Note that the stochastic integral in \eqref{eq:lambda} is defined pathwise,
i.e. 
$$ \int_{(0,t)} \phi(t-s) dH_s := \sum_{0<s< t} \phi(t-s) \textbf{1}_{\{ \Delta_s H=1 \}},
$$
where the sum is well defined and finite $\P$-a.s. for every $t$,
 where we used the notation $\Delta_s H:=H_s-H_{s-}$.
\noindent This definition can be generalized as follows. 
\begin{definition}[Compound Hawkes process]
\label{definition:compound}
Consider a Hawkes process $H$ with parameters $\mu > 0$ and 
$\phi:\real_+ \to \real_+$ bounded non-negative with $\|\phi\|_1 <1$.
Given $(X_i)_{i\geq 1}$ an i.i.d sequence of random variables,
independent of $H$, with common distribution $\nu$,
 the process 
\begin{equation}
\label{eq:compound}
S_t := \sum_{i=1}^{H_t} X_i, \quad t \geq 0,
\end{equation}
 is called a compound Hawkes process. 
\end{definition}
\noindent 
Our approach uses the now classical construction of the Hawkes process
by ``thinning'' or ``Poisson embedding'' 
as the unique solution to an SDE with respect to 
 a Poisson random measure $N$, 
see \textit{e.g.} \cite{Ogata,Daley_VereJones,Bremaud_Massoulie,Costa_etal} and references therein.
We refer to \cite[Theorem~3.3]{Hillairet_Reveillac_Rosenbaum} for a precise statement on
 the uniqueness of solutions to the SDE~\eqref{eq:H}. 
 Here, we set $\mathbb F^H:=(\mathcal F_t^H)_{t \geq 0}$ (respectively $\mathbb F^S:=(\mathcal F_t^S)_{t \geq 0}$) the natural filtration of $H$ (respectively of $S$) and $\mathcal F_\infty^H:=\displaystyle \lim_{t\to+\infty} \mathcal F_t^H$ (respectively $\mathcal F_\infty^S:=\displaystyle \lim_{t\to+\infty} \mathcal F_t^S$),
  and we have $\mathcal F_t^H \subset \mathcal F_t^S \subset \mathcal F_t^N$ as $H$ is completely determined by the jump times of $H$, which coincide with those of $S$. 
\begin{theorem}[See Theorem~3.3 of \cite{Hillairet_Reveillac_Rosenbaum}]
Let $\mu >0$ and $\phi:\real_+ \to \real_+$ such that $\|\phi\|_1<1$.
The system of stochastic differential equations 
\begin{equation}
\label{eq:H}
\left\lbrace
\begin{array}{l}
  S_t = \displaystyle{\int_{(0,t]\times \real_+\times \real} x \textbf{1}_{\{\theta \leq \lambda_s\}} N(ds,d\theta,dx)}, \quad t \geq 0,
    
\medskip

\\
H_t = \displaystyle{\int_{(0,t]\times \real_+\times \real} \textbf{1}_{\{\theta \leq \lambda_s\}} N(ds,d\theta,dx)},\quad t \geq 0,
  
\medskip

\\
\displaystyle \lambda_t = \mu + \int_{(0,t)} \phi(t-u) d H_u,\quad t \geq 0 .
\end{array}
\right.
\end{equation}
admits a unique solution $(X,H,\lambda)$ with $H$
(resp. $\lambda$) $\mathbb F^N$-adapted (resp. $\mathbb F^N$-predictable),
where 
$(H,\lambda)$ is a Hawkes process in the sense of Definition~\ref{def:standardHawkes},
 and $S$ is a compound Hawkes process in the sense of Definition~\ref{definition:compound}.
\end{theorem}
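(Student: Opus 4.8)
The plan is to obtain existence through a monotone thinning iteration controlled by an \emph{a priori} first-moment bound, then to derive uniqueness from the fact that the jump times and marks of any solution are determined recursively, and finally to identify the law of the solution with those of Definitions~\ref{def:standardHawkes} and~\ref{definition:compound}.

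For existence, I would set $H^{(0)}\equiv 0$, $\lambda^{(0)}\equiv\mu$, and define recursively
$$
H^{(n+1)}_t := \int_{(0,t]\times\real_+\times\real}\ind{\theta\leq\lambda^{(n)}_s}\,N(ds,d\theta,dx),
\qquad
\lambda^{(n+1)}_t := \mu + \int_{(0,t)}\phi(t-u)\,dH^{(n+1)}_u,\quad t\geq0.
$$
Since $\phi\geq0$, an induction shows that $H^{(n)}_t$ and $\lambda^{(n)}_s$ are nondecreasing in $n$, that each $H^{(n)}$ is $\mathbb F^N$-adapted and each $\lambda^{(n)}$ is $\mathbb F^N$-predictable (here it is crucial that the convolution in $\lambda$ runs over the \emph{open} interval $(0,t)$), and, combining the compensation formula for $N$ with Fubini's theorem and the bound $\int_0^t\lambda^{(n)}_s\,ds\leq\mu t+\|\phi\|_1 H^{(n)}_t$, that $\E\big[H^{(n)}_t\big]\leq\mu t\sum_{j=0}^{n-1}\|\phi\|_1^{\,j}$. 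Letting $n\to\infty$ and using $\|\phi\|_1<1$, the limits $H_t:=\lim_n H^{(n)}_t$ and $\lambda_t:=\lim_n\lambda^{(n)}_t$ satisfy $\E[H_t]\leq\mu t/(1-\|\phi\|_1)<\infty$, hence are finite $\P$-a.s., inherit the adaptedness and predictability, and solve the first two lines of \eqref{eq:H} by monotone convergence; setting $S_t:=\int_{(0,t]\times\real_+\times\real}x\,\ind{\theta\leq\lambda_s}\,N(ds,d\theta,dx)$ produces a solution $(S,H,\lambda)$.

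For uniqueness, given any two solutions $(S,H,\lambda)$ and $(S',H',\lambda')$ I would argue recursively on the jump times. Before the first jump of $H$ one has $\lambda_s=\mu$, so its first jump time equals the first time an atom of $N$ falls in $(0,t]\times[0,\mu]\times\real$, which is the same event for both solutions, and they select the same atom, hence carry the same $x$-mark; inductively, on $(\tau_n,\infty)$ the value of $\lambda$ depends only on the jumps of $H$ on $(0,\tau_n]$, which coincide with those of $H'$, so $\lambda$ and $\lambda'$ agree on $(\tau_n,\tau_{n+1}]$, forcing $\tau_{n+1}=\tau'_{n+1}$ with the same mark. Since the \emph{a priori} bound gives $\tau_n\to+\infty$ $\P$-a.s., the two solutions coincide on $[0,\infty)$. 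To identify the law, I note that $\int_0^t\lambda_s\,ds\in L^1$, so $H_t-\int_0^t\lambda_s\,ds$ is an $\mathbb F^N$-martingale, which yields exactly property $(ii)$ of Definition~\ref{def:standardHawkes}; and since, conditionally on the projection of $N$ onto its first two coordinates $\real_+\times\real_+$, the $x$-coordinates of the atoms of $N$ are i.i.d. with law $\nu$, while $H$, $\lambda$ and the set of selected atoms are measurable with respect to that projection, the marks $X_i$ of the successively selected atoms form an i.i.d. $\nu$ sequence independent of $H$ with $S_t=\sum_{i=1}^{H_t}X_i$, so $S$ is a compound Hawkes process in the sense of Definition~\ref{definition:compound}.

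The main obstacle is the \emph{a priori} moment bound ruling out explosion: once one knows that the recursively defined jump times do not accumulate on finite intervals, the remaining steps are bookkeeping, and it is precisely here that the subcriticality assumption $\|\phi\|_1<1$ is used, through the convergence of the geometric series $\sum_j\|\phi\|_1^{\,j}$.
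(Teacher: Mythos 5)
The paper does not prove this theorem: it imports it verbatim from \cite[Theorem~3.3]{Hillairet_Reveillac_Rosenbaum}, so there is no in-text proof to compare against. Your monotone thinning iteration $(H^{(n)},\lambda^{(n)})$, the geometric a priori bound $\E[H^{(n)}_t]\leq\mu t\sum_{j<n}\|\phi\|_1^j$, the pathwise recursion on jump times for uniqueness, and the conditional i.i.d.\ decomposition of the $x$-marks are exactly the standard Poisson-embedding route, and the overall architecture is sound.

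Two points deserve more care. First, in the uniqueness step you invoke ``the a priori bound gives $\tau_n\to+\infty$,'' but the bound you derived controls the constructed iterates (and hence the constructed limit), not an arbitrary solution $(S',H',\lambda')$. To rule out explosion of an arbitrary solution one should either localize the Gronwall argument at the stopping times $\tau'_n$ of $H'$ (noting $\int_0^{t\wedge\tau'_n}\lambda'_s\,ds\leq\mu t+\|\phi\|_1 H'_{t\wedge\tau'_n}$, with $\E[H'_{t\wedge\tau'_n}]\leq n<\infty$, so $(1-\|\phi\|_1)\E[H'_{t\wedge\tau'_n}]\leq\mu t$ uniformly in $n$), or — more economically — run the recursive comparison between $H'$ and the constructed $H$: since they share jump times while both are defined and $H$ is non-explosive, $H'$ inherits non-explosion. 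Second, passing $\ind{\theta\leq\lambda^{(n)}_s}\nearrow\ind{\theta\leq\lambda_s}$ inside the $N$-integral needs a word: on any bounded interval $H^{(n)}_t$ is an integer sequence bounded by the finite $H_t$, hence stabilizes, so $\lambda^{(n)}$ stabilizes too and the monotone-convergence step is actually a finite-step identification; alternatively one can note that atoms of $N$ a.s.\ avoid the graph $\{\theta=\lambda_s\}$ of the predictable $\lambda$. Predictability of $\lambda$ for merely Borel $\phi$ also merits a remark (approximate $\phi$ by continuous kernels). None of these affects the correctness of the strategy; they are the routine care points in making the thinning construction rigorous.
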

\noindent 
 We note that when $\nu(dx) = \delta_{1}(dx)$ equals the Dirac measure concentrated at $x=1$, i.e. $X_i \equiv 1$ in \eqref{eq:compound}, then $S \equiv H$.
\begin{notation}
We let $\mathcal{Z}:=(\mathcal{Z}_{(t,\theta)})_{(t,\theta) \in \real^2_+}$ denote the stochastic process defined as 
\begin{equation}
\label{eq:calZ}
\mathcal{Z}_{(t,\theta)}:= \textbf{1}_{\{\theta \leq \lambda_t\}}, \quad (t,\theta) \in \real_+^2.
\end{equation}
In this paper we consider
stochastic integrals 
\begin{align} 
\nonumber
F & =\delta^N(Z \mathcal{Z})
\\
\nonumber
& = \int_{\real_+^2\times \real} Z_{(t,x)} \mathcal{Z}_{(t,\theta)} \left(N(dt,d\theta,dx)-dt d \theta \nu(dx)\right)
\\
\nonumber 
 & = \int_{\real_+\times \real} Z_{(t,x)} \left(dH_t-\lambda_tdt \nu(dx)\right) 
\end{align} 
 against the (compensated) Hawkes process, with $Z:=(Z_{(t,x)})_{(t,x) \in \real_+\times\real}$ an element of $\mathcal P^N_2$. 
 Most of our analysis will be carried out for a deterministic $Z$.
\end{notation}
We now specify the Malliavin derivative and the integration by parts formula \eqref{eq:IBPPoisson} for functionals of the Hawkes process (the Hawkes itself $H$ or the compound Hawkes process $X$).
For this, we note that by definition of $H$, any jump of $N$ at an atom $(t,\theta,x)$ turns out to be a jump of $H$ if and only if $\theta \in [0,\lambda_t]$, 
as stated in the next proposition. 
\begin{prop}[Proposition 2.16 \cite{HHKR}]
\label{prop:TempDH}
 For any $\mathcal F_\infty^H$-measurable random variable $F$ we have 
$$ D_{(t,0,x)} F = D_{(t,\theta,x)} F, \quad
\theta \in [0,\lambda_t], \ t \geq 0, \ x \in \real,
\ 
\P{\rm -}a.s..
$$
\end{prop}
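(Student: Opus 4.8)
The plan is to exploit the Poisson embedding directly. The statement is really a ``localization'' result: for an $\mathcal F_\infty^H$-measurable $F$, the value of the Malliavin derivative $D_{(t,\theta,x)}F$ should not depend on the second coordinate $\theta$ as long as $\theta \le \lambda_t$, because all such atoms of $N$ produce \emph{the same} effect on $H$, namely adding one jump of $H$ at time $t$ with mark $x$. First I would recall that $F$ being $\mathcal F_\infty^H$-measurable means $F = G(H,\text{marks})$ for a functional $G$ that sees only the jump times and marks of $H$, equivalently $F(\omega)$ depends on $\omega$ only through the point configuration of those atoms $(s,\theta',x')$ of $\omega$ with $\theta' \le \lambda_s(\omega)$. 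This is the content of the fact already stated in the excerpt that $H$ (hence $\mathcal F_\infty^H$) is determined by the thinned atoms, i.e.\ by the SDE~\eqref{eq:H}.

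Next I would compare the two configurations $\eps^+_{(t,0,x)}(\omega)$ and $\eps^+_{(t,\theta,x)}(\omega)$ for a fixed $\theta \in [0,\lambda_t(\omega)]$. Adding the atom $(t,0,x)$ or the atom $(t,\theta,x)$ changes $\omega$ only by one extra point at time $t$; in both cases the new atom satisfies $\theta' \le \lambda_t$ \emph{with respect to the same intensity value} $\lambda_t(\omega)$, because $\lambda_t$ depends only on the history strictly before $t$ (it is $\mathbb F^N$-predictable, built from $\int_{(0,t)}\phi(t-u)\,dH_u$), and adding one point \emph{at} time $t$ does not alter $\lambda_s$ for $s \le t$, nor does the value of the $\theta$-coordinate of a past atom matter for $\lambda$. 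Hence in both perturbed configurations $H$ (and the marks of $H$, and therefore every $\mathcal F_\infty^H$-measurable functional) is affected in exactly the same way: one extra jump at $t$ with mark $x$. A delicate point is that one must also check that adding the new atom does not retroactively change which \emph{later} atoms get thinned in a coordinate-dependent way — but again, for $s>t$ the intensity $\lambda_s$ in both perturbed paths is the same function of the (common) jump history, so the thinned configurations for $s>t$ coincide as well. This gives $F\circ \eps^+_{(t,0,x)} = F\circ \eps^+_{(t,\theta,x)}$ $\P$-a.s., and subtracting $F$ from both sides yields $D_{(t,0,x)}F = D_{(t,\theta,x)}F$.

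The main obstacle is making the informal phrase ``$F$ depends on $\omega$ only through the thinned atoms'' rigorous and measurable, and in particular handling the null sets carefully: the identity $\lambda_t(\eps^+_{(t,\theta,x)}(\omega)) = \lambda_t(\omega)$ and the coincidence of the two thinned configurations must hold off a $\P$-null set simultaneously in a way compatible with the a.s.\ statement over $(t,\theta,x)$ ranging in the prescribed region. I would handle this by first establishing the claim for $F$ of the form $F = f(H_{t_1},\dots,H_{t_n})$ or, more generally, $\mathcal F^H_{t_k}$-measurable simple functionals using Proposition~\ref{prop:TempDH}'s precursor (the structure of $\mathcal F^N_t$ versus $\mathcal F^H_t$), then pass to general $\mathcal F_\infty^H$-measurable $F$ by a monotone-class / $L^2$-density argument, using that $D$ is well defined and closable on $L^2(\Omega,\mathcal F_\infty^N,\P)$ so both sides are stable under the relevant limits. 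Once the building blocks are in place the extension is routine; the crux is the pathwise comparison of thinnings in the previous paragraph.
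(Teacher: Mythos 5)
Your argument is correct and uses exactly the pathwise mechanism the paper relies on elsewhere (cf.\ the SDE-uniqueness/comparison reasoning in Lemmas~\ref{lemma:lemma1} and~\ref{lemma:lemma2} and the remark after~\eqref{eq:templambdashift} that $\lambda\circ\varepsilon^+_{(t,\eta,x)}$ is independent of $\eta$ and $x$): since $\lambda_t$ is built from $\int_{(0,t)}\phi(t-s)\,dH_s$ and is therefore unaffected by an atom placed \emph{at} time $t$, adding $(t,0,x)$ or $(t,\theta,x)$ with $\theta\le\lambda_t(\omega)$ produces the same extra accepted jump, and the subsequent thinned configurations coincide by the pathwise uniqueness of the solution to~\eqref{eq:H}. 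One simplification worth noting: the closing monotone-class/$L^2$-density step is superfluous here --- once you have shown that $\varepsilon^+_{(t,0,x)}(\omega)$ and $\varepsilon^+_{(t,\theta,x)}(\omega)$ yield the \emph{same} thinned marked point process (hence the same value of every $\mathcal F^H_\infty$-measurable map), the identity $F\circ\varepsilon^+_{(t,0,x)}=F\circ\varepsilon^+_{(t,\theta,x)}$ holds outright for all such $F$, with the only residual issue being the usual choice of a pointwise representative of $F$, not a limiting argument; closability of $D$ plays no role in this finite-difference setting.
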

\noindent 
In view of Proposition~\ref{prop:TempDH},
 for any $\mathcal F_\infty^H$-measurable random variable $F$, we set 
\begin{equation}
\label{def:DH}
 D_{(t,x)} F:= 
D_{(t,0,x)} F, \quad t \geq 0, \ x \in \real.
\end{equation} 
\noindent 
 Next, we state an integration by part formula for functionals of Hawkes processes. We recall below the integration by parts formula obtained in a particular case of \cite{HHKR} for the Hawkes process.
 
\begin{theorem}(Theorem~2.20 in \cite{HHKR})
Let $\mathcal{Z}:=(\mathcal{Z}_{(t,\theta)})_{(t,\theta) \in \real^2_+}$ be the stochastic process defined in \eqref{eq:calZ} and $Z:=(Z_{(t,x)})_{(t,x) \in \real_+ \times \real}$ be a $\mathbb F^N$-predictable process satisfying
$$
\E\left[\int_{\real_+ \times \real} |Z_{(t,x)}|^2 \lambda_t dt \nu(dx) \right]
< \infty
\quad \mbox{and}
\quad
 \E\left[ \left(\int_{\real_+ \times \real} Z_{(t,x)} \lambda_t dt \nu(dx)\right)^2\right]<\infty.
$$
 Then, for any $F\in L^2\big( \Omega^N,\mathcal F^N_\infty ,\P\big)$
  we have 
\begin{equation}
\label{eq:IBP}
\E\left[F \delta^N(Z \textbf{1}_{\{\theta \leq \lambda_\cdot \}})\right] = \E\left[ \int_{\real_+ \times \real} \lambda_t Z_{(t,x)} D_{(t,x)} F dt \nu(dx)\right].
\end{equation}
\end{theorem}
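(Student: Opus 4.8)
The plan is to read off \eqref{eq:IBP} from the Poisson integration by parts formula \eqref{eq:IBPPoisson} by choosing there $\rho=Z\mathcal{Z}$, and then to eliminate the auxiliary variable $\theta$ with the help of Proposition~\ref{prop:TempDH}. First I would check that $\rho:=Z\mathcal{Z}$, that is $\rho_{(t,\theta,x)}=Z_{(t,x)}\textbf{1}_{\{\theta\le\lambda_t\}}$, belongs to $\mathcal P^N_2$: it is $\mathbb F^N$-predictable, being the product of the predictable process $Z$ with the indicator of the predictable set $\{(t,\theta,x):\,\theta\le\lambda_t\}$, and the first moment hypothesis yields
\[
\E\left[\int_{\real_+^3}|\rho_{(t,\theta,x)}|^2\,dt\,d\theta\,\nu(dx)\right]=\E\left[\int_{\real_+\times\real}|Z_{(t,x)}|^2\lambda_t\,dt\,\nu(dx)\right]<\infty ,
\]
so that $\rho\in\mathcal P^N_2$ and $\delta^N(Z\mathcal{Z})$ is a well-defined element of $L^2$; the second moment hypothesis additionally guarantees that the compensator term $\int_{\real_+\times\real}Z_{(t,x)}\lambda_t\,dt\,\nu(dx)$ is a well-defined square-integrable random variable, which is what makes $\delta^N(Z\mathcal{Z})$ coincide with the compensated Hawkes integral $\int_{\real_+\times\real}Z_{(t,x)}(dH_t-\lambda_t\,dt\,\nu(dx))$ of the Notation above. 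Applying \eqref{eq:IBPPoisson} to this $\rho$ then gives
\[
\E\big[F\,\delta^N(Z\textbf{1}_{\{\theta\le\lambda_\cdot\}})\big]=\E\left[\int_{\real_+^2\times\real}Z_{(t,x)}\textbf{1}_{\{\theta\le\lambda_t\}}\,D_{(t,\theta,x)}F\,dt\,d\theta\,\nu(dx)\right].
\]

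Next I would simplify the right-hand side. The factor $\textbf{1}_{\{\theta\le\lambda_t\}}$ restricts the $\theta$-integration to the random interval $[0,\lambda_t]$, on which Proposition~\ref{prop:TempDH} asserts $D_{(t,\theta,x)}F=D_{(t,0,x)}F=D_{(t,x)}F$, a quantity free of $\theta$. Since \eqref{eq:IBPPoisson} is already known to hold, the integral on its right-hand side is absolutely convergent, so Fubini's theorem legitimately lets me carry out the $\theta$-integration first:
\begin{align*}
\int_{\real_+^2\times\real}Z_{(t,x)}\textbf{1}_{\{\theta\le\lambda_t\}}\,D_{(t,\theta,x)}F\,dt\,d\theta\,\nu(dx)
&=\int_{\real_+\times\real}Z_{(t,x)}\,D_{(t,x)}F\Big(\int_0^{\lambda_t}d\theta\Big)dt\,\nu(dx)\\
&=\int_{\real_+\times\real}\lambda_t\,Z_{(t,x)}\,D_{(t,x)}F\,dt\,\nu(dx),
\end{align*}
and taking expectations produces exactly \eqref{eq:IBP}.

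The computation is otherwise routine; the single delicate point is the use of Proposition~\ref{prop:TempDH}. That statement is phrased for $\mathcal F_\infty^H$-measurable $F$, whereas here $F$ is merely in $L^2(\Omega^N,\mathcal F_\infty^N,\P)$ (and, in the applications, an $\mathcal F_\infty^S$-measurable functional of the compound Hawkes process). The way around this is to note that the invariance $D_{(t,\theta,x)}F=D_{(t,0,x)}F$ for $\theta\in[0,\lambda_t]$ persists: by the structure of the thinning SDE \eqref{eq:H}, inserting an atom at $(t,\theta,x)$ with $\theta\le\lambda_t$ creates a jump of $H$ — hence of $S$, with mark $x$ — at time $t$ whose subsequent effect on the trajectory of $(S,H,\lambda)$ is the same for every $\theta$ in $[0,\lambda_t]$, the precise value of $\theta$ below the intensity being immaterial. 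One should also record that the exceptional null sets on which Proposition~\ref{prop:TempDH} may fail, and the interchange of the $\theta$- and $(t,x,\omega)$-integrations, are harmless precisely because the integrand on the right-hand side of \eqref{eq:IBPPoisson} is absolutely integrable; in the important special case of deterministic, non-negative $Z$ these integrability bookkeeping points are immediate.
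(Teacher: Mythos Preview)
The paper does not actually prove this theorem; it is quoted verbatim from \cite{HHKR} (Theorem~2.20 there) and used as a black box. So there is no ``paper's own proof'' to compare with.

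Your argument is the natural one and is correct: take $\rho_{(t,\theta,x)}=Z_{(t,x)}\textbf{1}_{\{\theta\le\lambda_t\}}$, check $\rho\in\mathcal P^N_2$ via the first moment hypothesis, apply the Poisson duality \eqref{eq:IBPPoisson}, and collapse the $\theta$-integral using Proposition~\ref{prop:TempDH}. This is exactly how \eqref{eq:IBP} is derived in \cite{HHKR}. Two small remarks. First, you rightly flag that Proposition~\ref{prop:TempDH} is stated only for $\mathcal F_\infty^H$-measurable $F$; for a \emph{general} $F\in L^2(\Omega^N,\mathcal F_\infty^N,\P)$ the identity $D_{(t,\theta,x)}F=D_{(t,0,x)}F$ on $\{\theta\le\lambda_t\}$ simply fails (take $F=N([0,1]\times[a,b]\times\real)$), so \eqref{eq:IBP} as literally stated is not true in that generality---the correct reading, and the only one used later in the paper, is for $F$ measurable with respect to $\mathcal F_\infty^H$ or $\mathcal F_\infty^S$, where your thinning argument applies. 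Second, your justification ``since \eqref{eq:IBPPoisson} is already known to hold, the integral on its right-hand side is absolutely convergent'' is slightly off: the cleaner route is to note that $\rho\in\mathcal P^N_2$ and $F\in L^2$ give $\E\int|\rho\,D F|\le\|\rho\|_{L^2}\,\|DF\|_{L^2}\le 2\|\rho\|_{L^2}\|F\|_{L^2}<\infty$ directly, which legitimises the $\theta$-integration.
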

\noindent 
We conclude this section with a commutation property for the operators $D$ and $\delta^N$. 
 
\begin{lemma}
  Let $z:=(z(t,x))_{(t,x) \in \real_+\times\real} \in L^2(\real_+\times \real, dt\otimes \nu)$
  and consider
  $\mathcal{Z}:=(\mathcal{Z}_{(t,\theta)})_{(t,\theta) \in \real^2_+}$
  given in \eqref{eq:calZ}. 
 We have 
\begin{equation}
\label{eq:Ddelta}
D_{(t,x)} \delta^N(z \mathcal{Z}) = z(t,x) + \delta^N \big(z \widehat{\mathcal{Z}}^t\big),
\quad
t\geq 0, \ x\geq 0, 
\end{equation}
where 
\begin{equation}
\nonumber 
\widehat{\mathcal{Z}}^t_{(r,\theta)}:= \textbf{1}_{\{r > t\}} \textbf{1}_{\{\lambda_r < \theta \leq \lambda_r \circ \varepsilon_{(t,0,1)}^+\}}, \quad r\in [t,+\infty), \
  \theta \geq 0.
\end{equation}
\end{lemma}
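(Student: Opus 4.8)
The plan is to read off both sides of \eqref{eq:Ddelta} directly from the definition of the adding-point operator. Recalling from \eqref{def:DH} that $D_{(t,x)}=D_{(t,0,x)}$, we must evaluate $\delta^N(z\mathcal{Z})\circ\eps_{(t,0,x)}^+-\delta^N(z\mathcal{Z})$, and it is convenient to split the divergence as $\delta^N(z\mathcal{Z})=J-C$, into its pathwise part $J:=\int_{\real_+^2\times\real}z(s,y)\,\mathcal{Z}_{(s,\theta)}\,N(ds,d\theta,dy)$ and (minus) its compensator $C:=\int_{\real_+\times\real}z(s,y)\,\lambda_s\,ds\,\nu(dy)$, where we used $\int_0^\infty\mathcal{Z}_{(s,\theta)}\,d\theta=\lambda_s$; this pathwise splitting is licit for $z\in L^1\cap L^2$, the general case $z\in L^2$ following by density. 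Everything then rests on three properties of the perturbed intensity $\lambda_\cdot\circ\eps_{(t,0,x)}^+$, all consequences of the well-posedness of \eqref{eq:H} (Theorem~3.3 of \cite{Hillairet_Reveillac_Rosenbaum}) and of the fact that the intensity dynamics in \eqref{eq:H}, hence also $H$, does not see the marks: $(a)$ $\lambda_s\circ\eps_{(t,0,x)}^+$ is independent of $x$, hence equal to $\lambda_s\circ\eps_{(t,0,1)}^+$; $(b)$ adding an atom at time $t$ can only raise the intensity path, $\lambda_s\circ\eps_{(t,0,1)}^+\geq\lambda_s$ for all $s\geq0$, with equality for $s\leq t$ since $\lambda_s$ is predictable and the new atom sits at $t$; and consequently $(c)$,
\begin{align*}
D_{(t,0,x)}\mathcal{Z}_{(s,\theta)}
&=\textbf{1}_{\{\theta\leq\lambda_s\circ\eps_{(t,0,1)}^+\}}-\textbf{1}_{\{\theta\leq\lambda_s\}}\\
&=\textbf{1}_{\{s>t\}}\,\textbf{1}_{\{\lambda_s<\theta\leq\lambda_s\circ\eps_{(t,0,1)}^+\}}=\widehat{\mathcal{Z}}^t_{(s,\theta)},\qquad s,\theta\geq0.
\end{align*}

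First I would treat the pathwise part. Applying $\eps_{(t,0,x)}^+$ to a configuration $\omega$ inserts a single atom at $(t,0,x)$, which is \emph{accepted} because $\theta=0<\mu\leq\lambda_t=\lambda_t\circ\eps_{(t,0,x)}^+$ and therefore contributes the value $z(t,x)$ to $J\circ\eps_{(t,0,x)}^+$; in addition, by $(c)$ the atoms already present in $\omega$ whose acceptance status changes are exactly those in the random strip $\{(s,\theta,y):\widehat{\mathcal{Z}}^t_{(s,\theta)}=1\}$, and by the monotonicity $(b)$ each of them passes from rejected to accepted. Hence
\[
J\circ\eps_{(t,0,x)}^+-J=z(t,x)+\int_{\real_+^2\times\real}z(s,y)\,\widehat{\mathcal{Z}}^t_{(s,\theta)}\,N(ds,d\theta,dy).
\]

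For the compensator, since $z$ and $\nu$ are deterministic we get, using $(a)$, that $C\circ\eps_{(t,0,x)}^+-C=\int_{\real_+\times\real}z(s,y)\big(\lambda_s\circ\eps_{(t,0,1)}^+-\lambda_s\big)\,ds\,\nu(dy)$, while integrating $(c)$ over $\theta$ gives $\lambda_s\circ\eps_{(t,0,1)}^+-\lambda_s=\int_0^\infty\widehat{\mathcal{Z}}^t_{(s,\theta)}\,d\theta$, so that by Fubini $C\circ\eps_{(t,0,x)}^+-C=\int_{\real_+^2\times\real}z(s,y)\,\widehat{\mathcal{Z}}^t_{(s,\theta)}\,ds\,d\theta\,\nu(dy)$. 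Subtracting this from the previous display and recognising the right-hand side as $\delta^N(z\widehat{\mathcal{Z}}^t)$ yields \eqref{eq:Ddelta}; alternatively, once $(c)$ is in hand one may plug $D_{(t,0,x)}(z\mathcal{Z})=z\,\widehat{\mathcal{Z}}^t$ and $(z\mathcal{Z})_{(t,0,x)}\circ\eps_{(t,0,x)}^+=z(t,x)$ into the standard Poisson commutation relation $D_{(t,0,x)}\delta^N(\rho)=\rho_{(t,0,x)}\circ\eps_{(t,0,x)}^++\delta^N(D_{(t,0,x)}\rho)$, see \cite{Privault_LectureNotes}. The step I expect to be the genuine obstacle is property $(c)$, and within it the identification of the newly accepted atoms with the strip $\{\lambda_s<\theta\leq\lambda_s\circ\eps_{(t,0,1)}^+\}$: the extra atom at $t$ sets off a cascade of secondary jumps on $(t,\infty)$, and one has to show that the Picard construction of $(H,\lambda)\circ\eps_{(t,0,1)}^+$ as the unique solution of \eqref{eq:H} driven by $\eps_{(t,0,1)}^+N$ is monotone in the driving measure, so that this whole cascade is encoded precisely by that strip. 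It then remains to check that $z\widehat{\mathcal{Z}}^t\in\mathcal P^N_2$, which follows from the routine identity $\E\big[\lambda_s\circ\eps_{(t,0,1)}^+-\lambda_s\big]=\textbf{1}_{\{s>t\}}\,\Psi(s-t)$ with $\Psi:=\sum_{n\geq1}\phi^{*n}$ bounded by $\|\phi\|_\infty/(1-\|\phi\|_1)$, together with $\E[\lambda_s]=\mu/(1-\|\phi\|_1)$; the same estimates legitimise the density argument mentioned above.
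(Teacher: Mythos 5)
Your main route — splitting the divergence pathwise into the counting-measure integral $J$ minus the compensator $C$, tracking directly how the inserted atom at $(t,0,x)$ contributes $z(t,x)$ (since $\mathcal Z_{(t,0)}=\mathcal Z_{(t,0)}\circ\eps_{(t,0,x)}^+=1$) and how the cascade of newly accepted points on $(t,\infty)$ is exactly the strip $\{\lambda_r<\theta\leq\lambda_r\circ\eps_{(t,0,1)}^+\}$, then combining with Fubini on the compensator side — is correct and genuinely different from the paper's proof. The paper instead cites the abstract Poisson commutation relation \cite[Proposition~4.1.4]{Privault_LectureNotes}, specialises to $\theta=0$ where $\mathcal Z_{(t,0)}=1$, and then computes $D_{(t,0,x)}\mathcal Z_{(r,\theta)}$ exactly as in your property $(c)$; this is precisely the ``alternative'' you mention at the end of your write-up. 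What your pathwise route buys is self-containedness: it makes visible why the commutation relation holds in this setting, at the cost of a density step for general $z\in L^2$ (benign here, since every $z$ used later is already in $L^1\cap L^2$). Both routes hinge on the same genuine content, which you correctly flag: the comparison principle $\lambda_r\circ\eps_{(t,0,1)}^+\geq\lambda_r$ and the $x$-independence of $\lambda\circ\eps_{(t,0,x)}^+$, which the paper isolates as Lemma~\ref{lemma:lemma2} (and its displayed Picard/first-jump-time iteration) and invokes at the penultimate line of the align. One small inaccuracy to fix in your integrability check: $\E[\lambda_s]=\mu\bigl(1+\int_0^s\psi(u)\,du\bigr)$, which is bounded above by $\mu/(1-\|\phi\|_1)$ but equals it only in the limit $s\to\infty$; since you only need the upper bound this does not affect the argument, and the identity $\E\bigl[\lambda_s\circ\eps_{(t,0,1)}^+-\lambda_s\bigr]=\textbf{1}_{\{s>t\}}\psi(s-t)$ together with the elementary bound $\|\psi\|_\infty\leq\|\phi\|_\infty/(1-\|\phi\|_1)$ is correct.
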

\begin{proof}
  The commutation relation \eqref{eq:Ddelta}
  can be derived in the ramework
  of the Malliavin calculus with respect to $N$. 
   In particular, according to \cite[Proposition 4.1.4]{Privault_LectureNotes}, for any $(t,\theta,x) \in \real_+^2\times \real$ we have 
   $$ D_{(t,\theta,x)} \delta^N(z \mathcal{Z}) = z(t,x) \mathcal{Z}_{(t,\theta)} + \delta^N(D_{(t,\theta,x)} (z \mathcal{Z})).
   $$
   By the definition \eqref{def:DH}
   of $D_{(t,x)}$ and the fact that $z$ is deterministic, we obtain 
$$ D_{(t,0,x)} \delta^N(z \mathcal{Z}) = z(t,x) + \delta^N(z D_{(t,0,x)} \mathcal{Z}).$$
In addition, as $\nu$ does not appear in the expression of $(H,\lambda)$ (see \eqref{eq:H}), we have 
\begin{align*}
D_{(t,0,x)} \mathcal{Z}_{(r,\theta)} &= (\mathcal{Z} \circ \varepsilon_{{(t,0,x)}^+\}})_{(r,\theta)} - \mathcal{Z}_{(r,\theta)} \\
&= \textbf{1}_{\{r > t\}} \big(\textbf{1}_{\{\theta \leq \lambda_r \circ \varepsilon_{(t,0,x)}^+\}}-\textbf{1}_{\{\theta \leq \lambda_r \}}\big)\\
&= \textbf{1}_{\{r > t\}} \textbf{1}_{\{\lambda_r < \theta \leq \lambda_r \circ \varepsilon_{(t,0 ,x)}^+\}}\\
&= \textbf{1}_{\{r > t\}} \textbf{1}_{\{\lambda_r < \theta \leq \lambda_r \circ \varepsilon_{(t,0 ,1)}^+\}}.
\end{align*}
where for the last equality, as remarked in the proof of Lemma \ref{lemma:lemma2},
 see also \eqref{eq:templambdashift}, $\lambda_r \circ \varepsilon_{(t,0 ,x)}^+$ does not depend on the value $x$ which thus can be taken equal to $x=1$. 
\end{proof}

\section{Main results}
\label{section:main}
\subsection{A general estimate}
In Theorem~\ref{th:main1} we present our main estimate
for functionals of the form 
\begin{align*}
  F & =\delta^N(z \mathcal{Z})
  \\
  & = \int_{\real_+^2\times \real} z(t,x) \mathcal{Z}_{(t,\theta)} \left(N(dt,d\theta,dx)-dt d \theta \nu(dx)\right)
  \\
  & = \int_{\real_+\times \real} z(t,x) \left(dH_t-\lambda_tdt  \nu(dx)\right),
  \end{align*} 
where 
$$ \mathcal{Z}_{(t,\theta)}= \textbf{1}_{\{\theta \leq \lambda_t\}}, \quad (t,\theta) \in \real_+^2,$$
and $z(t,x)$ is a deterministic square-integrable function. 
\begin{theorem}
\label{th:main1}
Let $z:=(z(t,x))_{(t,x)\in \real_+\times\real} \in L^2(\real_+\times \real,
dt\otimes \nu)$, $F:=\delta^N(z \mathcal{Z})$ and ${\cal N}_{\gamma^2} \sim \mathcal N(0,\gamma^2)$ with $\gamma^2 >0$.
It holds that:  
\begin{description}
\item{(i)} $\displaystyle
  d_W\big(F,{\cal N}_{\gamma^2}\big) \leq \E\left[\left|\gamma^2 - \int_{\real_+\times \real} |z(t,x)|^2 \lambda_t dt \nu(dx)\right| \right] $ 
\begin{equation}
\label{eq:rela1}
 + \E\left[\int_{\real_+\times \real} |z(t,x)| | D_{(t,x)} F |^2 \lambda_t dt \nu(dx) \right]. 
\end{equation}
\item{(ii)} If in addition, $z(t,x)$ satisfies
\begin{equation}
\label{eq:conditionZ}
z(t,x) \geq 0, \textrm{ for } dt \otimes \nu \textrm{ almost every } (t,x), 
\end{equation}
 then 
\begin{equation}
\label{eq:rela2}
d_W\big(F,{\cal N}_{\gamma^2}\big) \leq \E\left[\left|\gamma^2 - \int_{\real_+\times \real} |z(t,x)|^2 \lambda_t dt \nu(dx)\right| \right] + \E\big[F^3\big]. 
\end{equation}
\end{description} 
\end{theorem}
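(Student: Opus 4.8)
The plan is to combine the Stein--Malliavin bound for normal approximation on the Poisson space with the commutation relation \eqref{eq:Ddelta} and the simplified integration by parts formula \eqref{eq:IBP}. First I would recall the general Stein bound: for $F = \delta^N(u)$ with $u$ an integrable process and ${\cal N}_{\gamma^2}\sim\mathcal N(0,\gamma^2)$, one has
\begin{equation}
\nonumber
d_W(F,{\cal N}_{\gamma^2}) \leq \frac{1}{\gamma^2}\E\Big[\big|\gamma^2 - \langle DF, u\rangle\big|\Big] + \frac{1}{\gamma^2}\E\Big[\int \big|D_{(t,\theta,x)}F\big|^2 |u_{(t,\theta,x)}|\, dt\,d\theta\,\nu(dx)\Big],
\end{equation}
which is the standard output of Stein's method coupled with the Poisson integration by parts \eqref{eq:IBPPoisson} (as in \cite{Peccati_2010,Privault,torrisi}); here $u = z\mathcal{Z}$, so $u_{(t,\theta,x)} = z(t,x)\textbf{1}_{\{\theta\leq\lambda_t\}}$. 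I would absorb the $1/\gamma^2$ factors into the constant or, more precisely, track that the Stein constant for $d_W$ with a centered Gaussian of variance $\gamma^2$ produces exactly these coefficients — this bookkeeping is routine but must be stated.

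Next I would reduce the $\mathbb{R}_+^2\times\real$ integrals to $\real_+\times\real$ integrals against $\lambda_t\,dt\,\nu(dx)$. For the second term, integrating out $\theta$ over $[0,\lambda_t]$ and using Proposition~\ref{prop:TempDH} (so that $D_{(t,\theta,x)}F = D_{(t,x)}F$ for $\theta\le\lambda_t$) gives $\int_{\real_+\times\real}|z(t,x)|\,|D_{(t,x)}F|^2\lambda_t\,dt\,\nu(dx)$, which is exactly the second term in \eqref{eq:rela1}. For the first term I would rewrite $\langle DF, z\mathcal{Z}\rangle$: using the commutation relation \eqref{eq:Ddelta}, $D_{(t,\theta,x)}F = z(t,x)\mathcal{Z}_{(t,\theta)} + \delta^N(z\widehat{\mathcal{Z}}^t)$, so that
\begin{equation}
\nonumber
\langle DF, z\mathcal{Z}\rangle = \int_{\real_+\times\real} |z(t,x)|^2 \lambda_t\,dt\,\nu(dx) + \int_{\real_+\times\real} z(t,x)\,\delta^N\big(z\widehat{\mathcal{Z}}^t\big)\,\lambda_t\,dt\,\nu(dx),
\end{equation}
and the remainder term $\int z(t,x)\,\delta^N(z\widehat{\mathcal{Z}}^t)\,\lambda_t\,dt\,\nu(dx)$ needs to be controlled. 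The key observation is that the increment $\widehat{\mathcal{Z}}^t$ records precisely the extra mass added to future intensities when a point is inserted at time $t$, and one recognizes $D_{(r,x)}F$ restricted to $r>t$ in terms of $\delta^N(z\widehat{\mathcal{Z}}^t)$; after a Fubini argument and another application of \eqref{eq:IBP}, this remainder can be folded into the same quantity $\int |z(t,x)|\,|D_{(t,x)}F|^2\lambda_t\,dt\,\nu(dx)$ up to sign, which is why $(i)$ only has two terms. The main obstacle here is handling this cross term cleanly — one must verify that $\delta^N(z\widehat{\mathcal{Z}}^t)$ is integrable against $\lambda_t\,dt\,\nu(dx)$ and that the manipulation of iterated divergences/derivatives is justified; this is where the deterministic, square-integrable, and (for part $(ii)$) non-negativity hypotheses on $z$ are used.

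For part $(ii)$, assuming $z(t,x)\geq 0$, I would show that the second term of \eqref{eq:rela1} equals $\E[F^3]$ exactly. This follows from the identity $\E[F^3] = \E[F^2\delta^N(z\mathcal{Z})]$ and integration by parts \eqref{eq:IBP}: $\E[F^2\delta^N(z\mathcal{Z})] = \E[\int \lambda_t z(t,x) D_{(t,x)}(F^2)\,dt\,\nu(dx)]$, and using the Leibniz-type rule $D_{(t,x)}(F^2) = (D_{(t,x)}F)^2 + 2F\,D_{(t,x)}F$ together with $D_{(t,x)}(F^2) = (F\circ\varepsilon^+_{(t,0,x)})^2 - F^2$, one rearranges to get $\E[F^3] = \E[\int \lambda_t z(t,x)\,|D_{(t,x)}F|^2\,dt\,\nu(dx)] + 2\E[F\int\lambda_t z(t,x)D_{(t,x)}F\,dt\,\nu(dx)]$, and the last expectation is $\E[F\cdot F] \cdot$-type term that combines with the variance expression; more directly, since $z\geq 0$ the second term of \eqref{eq:rela1} is already $\E[\int \lambda_t |z(t,x)|\,|D_{(t,x)}F|^2\,dt\,\nu(dx)] = \E[\int \lambda_t z(t,x)\,|D_{(t,x)}F|^2\,dt\,\nu(dx)]$, and identifying this with $\E[F^3]$ requires showing the cross term $2\E[F\int\lambda_t z(t,x)D_{(t,x)}F\,dt\,\nu(dx)]$ vanishes or is already accounted for — which it is, because $\int\lambda_t z(t,x)D_{(t,x)}F\,dt\,\nu(dx)$ relates back to $F$ via \eqref{eq:Ddelta} and \eqref{eq:IBP}. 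The delicate point is the sign: dropping the absolute value $|z(t,x)| = z(t,x)$ is exactly what makes the clean equality with the third moment possible, and one must check that $\E[F^3]$ is indeed non-negative (equivalently, finite and well-defined) under the stated integrability, which I would verify as a preliminary integrability lemma.
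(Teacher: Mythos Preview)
Your overall framework (Stein characterization, integration by parts \eqref{eq:IBP}, the commutation relation \eqref{eq:Ddelta}) is right, but there are two genuine gaps, each tied to a lemma you have not identified.

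\medskip

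\textbf{Part $(i)$.} Starting from the pre-packaged bound $\E\big[|\gamma^2 - \langle DF, z\mathcal{Z}\rangle|\big]$ is too crude. Once the absolute value is in place, the cross term $\int z(t,x)\,\lambda_t\,\delta^N(z\widehat{\mathcal{Z}}^t)\,dt\,\nu(dx)$ cannot be ``folded into'' the second term of \eqref{eq:rela1} by Fubini or another IBP; nothing of the sort happens. The paper stays one level lower: it keeps $f'(F)$ inside the expectation, writes
\[
\E\big[\gamma^2 f'(F) - Ff(F)\big] = \E\Big[f'(F)\Big(\gamma^2 - \int |z|^2\lambda_t\Big)\Big]
- \int z(t,x)\,\E\big[\lambda_t\,\E_t\big[f'(F)\,\delta^N(z\widehat{\mathcal{Z}}^t)\big]\big]\,dt\,\nu(dx) + R_2,
\]
where $R_2$ is the Taylor remainder involving $f''$, and the missing key is the orthogonality lemma
\[
\E_t\big[\varphi(F)\,\delta^N(z\widehat{\mathcal{Z}}^t)\big] = 0
\]
for any suitable $\varphi$. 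Its proof: $\widehat{\mathcal{Z}}^t$ is supported on $\{\theta>\lambda_s\}$, and there adding a point at $(s,\theta,y)$ leaves $(F,\lambda)$ unchanged (by pathwise uniqueness of the Hawkes SDE), so $D_{(s,\theta,y)}\varphi(F)=0$ on the support of $\widehat{\mathcal{Z}}^t$. Only after this cross term is shown to vanish \emph{exactly} does one take absolute values and use $\|f'\|_\infty\le 1$, $\|f''\|_\infty\le 2$.

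\medskip

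\textbf{Part $(ii)$.} The second term in \eqref{eq:rela1} does \emph{not} equal $\E[F^3]$; it is bounded above by it. Your decomposition $\E[F^3] = T_1 + 2T_2$ with $T_1 = \int z\,\E[\lambda_t|D_{(t,x)}F|^2]$ is correct, and $T_1$ is indeed the second term of \eqref{eq:rela1} when $z\ge 0$. But $T_2 = \int z\,\E[\lambda_t F\,D_{(t,x)}F]$ neither vanishes nor is ``already accounted for''. The paper first uses \eqref{eq:Ddelta} and the same orthogonality lemma above to reduce $T_2$ to $\int |z(t,x)|^2\,\E[\lambda_t F]\,dt\,\nu(dx)$, then applies \eqref{eq:IBP} once more to $\E[\lambda_t F]=\E[\lambda_t\,\delta^N(z\mathcal{Z})]$ to get $\int_{[0,t)\times\real} z(s,y)\,\E[\lambda_s\,D_{(s,y)}\lambda_t]\,ds\,\nu(dy)$. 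The second missing ingredient is a comparison principle: $D_{(s,y)}\lambda_t\ge 0$ for all $s<t$, proved by a pathwise induction on the successive jump times of $H$ and $H\circ\varepsilon^+_{(s,0,y)}$, showing the shifted intensity always dominates the original one. Combined with $z\ge 0$ this gives $T_2\ge 0$, hence $\E[F^3]\ge T_1$, which is exactly what \eqref{eq:rela2} requires.
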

\begin{remark}
\label{rem:comment2}
We note that the term $\E\big[F^3\big]$ in \eqref{eq:rela2} is also the third
cumulant of the centered random variable $F$, 
see Section~4.8 of \cite{ChenGolsteinShao} on the ``third moment phenomenon''.
In particular, the vanishing of the first cumulant can lead to faster
convergence rates, see, e.g.
\cite{Privault,privaultthird}, 
\cite{nguyen3rd},
and Theorem~\ref{th:fast} below. 
\end{remark}

\subsection{Quantitative limit theorem for compound Hawkes processes}

Throughout this section we consider 
\begin{equation}
\nonumber 
S_t = \sum_{i=1}^{H_t} X_i, \quad t \geq 0, 
\end{equation}
defined through the system \eqref{eq:H},
 where $(X_i)_i$ is 
 a sequence of independent and identically distributed random variables
 with common distribution $\nu$,
 independent of the Hawkes process $H$ with intensity $\lambda$ given by \eqref{eq:lambda}. We will assume in addition that the kernel $\phi$ satisfies the following condition.
\begin{assumption}
\label{assumptionPhi}
The function $\phi : \real_+\to \real_+$ is such that 
$$ \|\phi\|_1=\int_0^\infty \phi(u) du < 1,
\quad \mbox{and} \quad 
\int_0^\infty u \phi(u) du < +\infty.
$$
\end{assumption}
\noindent 
Assumption~\ref{assumptionPhi} allows us to define 
\begin{equation}
\label{eq:Psi}
\psi:=\sum_{n\geq 1} \phi ^{(*n)},
\end{equation} 
where $\phi^{(*n)}$ is the $n$-th convolution of $\phi$ with itself, with 
$$  \int_0^\infty \psi (t) dt =\int_0^\infty \sum_{n\geq 1} \phi ^{(*n)}(t)dt
    =\sum_{n\geq 1}\int_0^\infty \phi^{(*n)}(t)dt
    =\sum_{n\geq 1} \|\phi\|_1^n
    =\frac{\|\phi\|_1}{1-\|\phi\|_1}.$$
\begin{theorem}[Quantitative limit theorem for the Hawkes process]
\label{th:main2}
 Assume that $\E[X_1^2] <+\infty$ and that Assumption~\ref{assumptionPhi} holds,
 and set 
 $$
  \gamma^2:= \mu \frac{\vartheta^2}{1-\|\phi\|_1}
  \quad \mbox{and}
  \quad
  F_T :=\frac{S_T- \E[X_1] \int_0^T \lambda_t dt}{\sqrt{T}},
 \qquad T>0. 
 $$ 
Then, there exists $C_{\mu,\phi,\vartheta} >0$ (depending only on $\mu, \|\phi\|_1, \vartheta$) such that 
\begin{equation}
\nonumber 
d_W(F_T,\mathcal{N}(0,\gamma^2)) \leq \frac{C_{\mu,\phi,\vartheta}}{\sqrt{T}},
\qquad T>0. 
\end{equation}
\end{theorem}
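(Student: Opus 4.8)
The plan is to apply Theorem~\ref{th:main1}-$(ii)$ to a suitable deterministic integrand and then to control the two resulting terms. First I would identify the right integrand: writing $S_T - \E[X_1]\int_0^T \lambda_t\,dt$ as a stochastic integral against the compensated Poisson measure, the natural choice is $z(t,x) = x\,\textbf{1}_{[0,T]}(t)/\sqrt T$, so that $F_T = \delta^N(z\mathcal{Z})$. Since $z$ is not necessarily non-negative unless the $X_i$ are, I would actually split into the sign of $x$ or, more cleanly, bound $d_W$ using part $(i)$ of Theorem~\ref{th:main1} in the general case and reserve part $(ii)$ for the non-negative situation; in either case the structure of the two error terms is the same up to the substitution of $\E[F_T^3]$ by the Malliavin term $\E\big[\int_{\real_+\times\real} |z(t,x)|\,|D_{(t,x)}F_T|^2\lambda_t\,dt\,\nu(dx)\big]$.

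The first error term is $\E\big[\,|\gamma^2 - T^{-1}\int_0^T \E[X_1^2]\,\lambda_t\,dt\,|\,\big]$. Here the key input is the asymptotics of the Hawkes intensity: from \eqref{eq:lambda} and the renewal/convolution identity \eqref{eq:Psi} one gets $\E[\lambda_t] = \mu + \mu\int_0^t \psi(s)\,ds \to \mu/(1-\|\phi\|_1)$, so that $T^{-1}\int_0^T \E[\lambda_t]\,dt \to \mu/(1-\|\phi\|_1)$, and with $\vartheta^2 = \E[X_1^2]$ this matches $\gamma^2$. The rate is governed by $\int_0^\infty(\psi(\infty)-\psi(s))\,ds$-type tails, which is finite precisely because $\int_0^\infty u\phi(u)\,du<\infty$ in Assumption~\ref{assumptionPhi} (the first moment of $\phi$ controls the first moment of $\psi$). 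I would also need to pass from $\E[\lambda_t]$ to $\E|\lambda_t - \E[\lambda_t]|$, i.e. bound the centered fluctuations; for this one uses $\E|{\cdot}| \le \sqrt{\operatorname{Var}(\cdot)}$ together with a uniform-in-$T$ bound on $\operatorname{Var}\big(T^{-1}\int_0^T\lambda_t\,dt\big) = O(1/T)$, which follows from the known second-moment structure of linear Hawkes processes (the covariance $\operatorname{Cov}(\lambda_s,\lambda_t)$ decays in $|t-s|$, again quantified by the first moment of $\psi$). All of this gives a bound of order $1/\sqrt T$ on the first term.

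The second error term is where the real work lies, and it is the main obstacle: one must show $\E\big[\int_0^T |z(t,x)|\,|D_{(t,x)}F_T|^2\lambda_t\,dt\,\nu(dx)\big] = O(1/\sqrt T)$, equivalently (by $(ii)$ when $z\ge0$) $\E[F_T^3] = O(1/\sqrt T)$. The plan is to use the commutation relation \eqref{eq:Ddelta}, $D_{(t,x)}F_T = z(t,x) + \delta^N(z\widehat{\mathcal Z}^t)$, so that $|D_{(t,x)}F_T|^2 \le 2|z(t,x)|^2 + 2|\delta^N(z\widehat{\mathcal Z}^t)|^2$. The first piece contributes $\frac{C}{T^{3/2}}\E[\int_0^T |x|^3\lambda_t\,dt\,\nu(dx)] = O(1/\sqrt T)$ directly. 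For the second piece one must estimate $\E\big[\lambda_t\,|\delta^N(z\widehat{\mathcal Z}^t)|^2\big]$ uniformly enough in $t\in[0,T]$; this requires understanding $\widehat{\mathcal Z}^t_{(r,\theta)} = \textbf{1}_{\{r>t\}}\textbf{1}_{\{\lambda_r < \theta \le \lambda_r\circ\varepsilon^+_{(t,0,1)}\}}$, i.e. the ``extra intensity'' created by inserting a point at time $t$. The crucial estimate — presumably a technical lemma proved elsewhere in the paper — is that $\E\big[(\lambda_r\circ\varepsilon^+_{(t,0,1)} - \lambda_r)\,|\,\mathcal F_t\big]$ is controlled by $\psi(r-t)$ in $L^p$, so that the shifted intensity's excess is integrable over $r>t$ with total mass $\|\psi\|_1 = \|\phi\|_1/(1-\|\phi\|_1)$. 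Feeding this into an isometry-type bound for $\delta^N$ (the Poisson/Hawkes energy identity) gives $\E[\lambda_t|\delta^N(z\widehat{\mathcal Z}^t)|^2] \le \frac{C}{T}\int_t^T (\cdots)\psi(r-t)\,dr$, and integrating over $t\in[0,T]$ yields another $O(1/T)\cdot O(\|\psi\|_1)\cdot\|z\|$-type quantity. Since $z$ carries the factor $T^{-1/2}$ and $\textbf{1}_{[0,T]}$, one $1/\sqrt T$ remains. Combining the two error terms and collecting all constants into a single $C_{\mu,\phi,\vartheta}$ depending only on $\mu$, $\|\phi\|_1$, $\int u\phi(u)\,du$ and $\vartheta^2=\E[X_1^2]$ completes the proof. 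The delicate points throughout are (a) the moment bounds on the shifted intensity $\lambda_r\circ\varepsilon^+_{(t,0,1)}$, which rely on the fixed-point/Grönwall structure of \eqref{eq:H}, and (b) ensuring every constant is genuinely uniform in $T$, which is exactly what fails in the earlier bounds of \cite{torrisi}.
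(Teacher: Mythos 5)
Your proposal follows essentially the same route as the paper: set $z(t,x)=x\,\textbf{1}_{[0,T]}(t)/\sqrt{T}$, apply Theorem~\ref{th:main1}-$(i)$ (since $z$ need not be non-negative, $(ii)$ is only an alternative for non-negative $X_i$), then control the variance term via the asymptotics of $\E[\lambda_t]$ and an $L^2$ bound on $\int_0^T(\lambda_t-\E[\lambda_t])\,dt$, and control the Malliavin term via the commutation relation $D_{(t,x)}F_T=z(t,x)+\delta^N(z\widehat{\mathcal{Z}}^t)$ together with moment estimates on $\widehat{\lambda}^t$ and the remainder $R_{t,T}$ — exactly the ingredients the paper invokes by citing \cite[Lemma~4.1 and Terms $A_{1,2}$, $A_2$ in Theorem~3.10]{HHKR}. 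The one thing your more explicit reconstruction makes visible is that the $|z|^3$ contribution to the Malliavin term requires $\int_{\real}|x|^3\,\nu(dx)<\infty$, a moment condition slightly stronger than the stated $\E[X_1^2]<\infty$; this is a tacit assumption already present in the paper's citation-based proof, not a defect of your argument.
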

\begin{remark}
\label{rem:comment1}
As noted in its proof, this bound relies on the approach presented in \cite{torrisi,HHKR}. However,
 the bounds stated in those papers
 do not imply that $d_W\big(F_T,{\cal N}_{\gamma^2}\big)$ vanishes  as $T$ tends to $+\infty$ for any kernel $\phi$ satisfying Assumption~\ref{assumptionPhi}.
  Indeed, \cite[Theorem~3.1]{torrisi} (on which \cite[Relations (5.2) and (5.4)]{torrisi} rely on) entails 
$$
d_W(F_T, \mathcal{N}(0,{\sigma}^2)) \leq B_T, \ \!  \textrm{ with } \ \! 
B_T \geq
\sqrt{\frac{8}{\pi}} \|\phi\|_1
\frac{2-\|\phi\|_1}{1-\|\phi\|_1},
$$
 that does not converge to $0$. The situation is similar in \cite{HHKR},
 which contains the bound 
$$ d_W(F_T, \mathcal{N}(0,{\sigma}^2)) \leq \frac{C_{\mu,\phi,\vartheta}}{\sqrt{T}} + R_T,$$
 where $R_T$ is not shown to converge to $0$ as $T$ goes to $+\infty$, except in the particular cases of exponential and Erlang kernels,
 see  \cite[Theorems~3.10 and 3.12]{HHKR}  for a precise statement. The common point of these two approaches is a too stringent estimation in \eqref{eq:temp1} in the proof of Theorem~\ref{th:main1}. Here, pushing the computation one step forward and taking advantage of a deterministic integrand, we obtain a sharper result by making the term $R_T$ disappear from the bound. 
\end{remark}
\noindent
 In Theorem~\ref{th:fast} we provide a faster rate of convergence by considering a set of smoother test functions in the definition of the distance $d_{4,\infty}$ (see Notation~\ref{def:W} in Appendix~\ref{section:Stein}) under additional conditions on the moments of the random variables $X_i$, as in the following quantitative central limit theorem given of \cite{ChenGolsteinShao}. 

\begin{prop}
      (\cite[Corollary 4.4 in Section 4.8]{ChenGolsteinShao})
  Assume that 
  $$ \E[X_1] = \E\big[X_1^3\big] =0, \quad \E[X_1^2]=1, \quad
  \mbox{and} \quad \E[X_1^4] = \int_{-\infty}^\infty x^4 \nu(dx) <+\infty.
 $$
 Then, letting $W_n :=n^{-1/2} \sum_{i=1}^n X_i$, $n\geq 1$, we have 
$$ d_{4,\infty}(W_n ,\mathcal{N}(0,1)) \leq \frac{11+\E[X_1^4]}{24 n}.$$
\end{prop}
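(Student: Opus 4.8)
The plan is to prove this by Stein's method via the classical leave-one-out expansion, exactly as in \cite{ChenGolsteinShao}; since the statement is quoted from there, I only sketch the argument. Recall that $d_{4,\infty}$ (Notation~\ref{def:W}) is the supremum of $|\E[h(W_n)]-\E[h(\mathcal N_1)]|$ over test functions $h$ whose derivatives $h',h'',h''',h^{(4)}$ are all bounded by $1$, with $\mathcal N_1\sim\mathcal N(0,1)$. Fix such an $h$ and let $f=f_h$ be the bounded solution of the Stein equation $f'(x)-xf(x)=h(x)-\E[h(\mathcal N_1)]$. The first ingredient is the smoothing estimates for the Gaussian Stein operator: since $h$ has bounded derivatives up to order $4$, so does $f$, with explicit constants of the form $\|f^{(k)}\|_\infty\le\frac1k\|h^{(k)}\|_\infty$ for $k\ge1$ (together with $\|f\|_\infty\le\sqrt{\pi/2}$), obtained from the standard Mehler-type integral representation of $f_h$ and repeated Gaussian integration by parts. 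Then $\E[h(W_n)]-\E[h(\mathcal N_1)]=\E[f'(W_n)]-\E[W_n f(W_n)]$, and it suffices to bound the right-hand side by $(11+\E[X_1^4])/(24n)$.

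Next I would set $\xi_i:=X_i/\sqrt n$, so that $W_n=\sum_{i=1}^n\xi_i$, and write $W^{(i)}:=W_n-\xi_i$; by independence of the $X_i$, the variable $\xi_i$ is independent of $W^{(i)}$. Taylor-expanding $f$ around $W^{(i)}$ to third order gives
\[
\E[\xi_i f(W_n)] = \E[\xi_i]\E[f(W^{(i)})] + \E[\xi_i^2]\E[f'(W^{(i)})] + \tfrac12\E[\xi_i^3]\E[f''(W^{(i)})] + \tfrac16\E\big[\xi_i^4 f'''(\eta_i)\big]
\]
for some intermediate point $\eta_i$. Here the zeroth-order term vanishes because $\E[X_1]=0$, the second-order term vanishes because $\E[X_1^3]=0$ — this is precisely the ``third moment phenomenon'': the term that would normally contribute at order $n^{-1/2}$ is annihilated — while $\E[\xi_i^2]=1/n$ and the remainder is at most $\tfrac16\|f'''\|_\infty\E[\xi_i^4]=\|f'''\|_\infty\E[X_1^4]/(6n^2)$. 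Summing over $i$,
\[
\E[W_n f(W_n)] = \frac1n\sum_{i=1}^n\E[f'(W^{(i)})] + O\!\Big(\frac{\|f'''\|_\infty\E[X_1^4]}{6n}\Big).
\]

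Then I would reconcile $\frac1n\sum_i\E[f'(W^{(i)})]$ with $\E[f'(W_n)]$ by a further second-order Taylor expansion of $f'(W_n)$ around $W^{(i)}$: the first-order correction $\E[\xi_i]\E[f''(W^{(i)})]$ again vanishes, leaving a remainder bounded by $\tfrac12\|f'''\|_\infty\E[\xi_i^2]=\|f'''\|_\infty/(2n)$, whence $\frac1n\sum_i\E[f'(W^{(i)})]=\E[f'(W_n)]+O(\|f'''\|_\infty/(2n))$. Combining,
\[
\big|\E[h(W_n)]-\E[h(\mathcal N_1)]\big| \ \le\ \frac{\|f'''\|_\infty}{n}\Big(\frac{\E[X_1^4]}{6}+\frac12\Big) + (\text{lower-order remainders}),
\]
and inserting the sharp Stein bound on $\|f'''\|_\infty$ and keeping careful track of the remaining remainder terms (a more disciplined version of the two expansions above produces the numerator $11$ in place of the crude $3$ here) yields $d_{4,\infty}(W_n,\mathcal N(0,1))\le(11+\E[X_1^4])/(24n)$ upon taking the supremum over admissible $h$.

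The main obstacle is not the structure of the argument — it is the textbook leave-one-out Stein expansion — but the precise constant $\tfrac{11+\E[X_1^4]}{24}$. Pinning it down requires (i) the optimal smoothing estimates $\|f_h^{(k)}\|_\infty\le\frac1k\|h^{(k)}\|_\infty$, which is also why the metric $d_{4,\infty}$ (four bounded derivatives of $h$) is the natural regularity class here, both for the sharp constants and, more importantly, because this is the $\E[X_1^3]=0$ specialization of the general Edgeworth-type theorem of \cite{ChenGolsteinShao} which genuinely needs that many derivatives; and (ii) a careful choice of where to truncate each Taylor expansion so that $\E[X_1^4]$ enters with coefficient exactly $\tfrac1{24}$ while all the lower-order constants are absorbed into the $11$. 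Everything else is routine once the vanishing of the first and third moments has been used to remove the leading error terms.
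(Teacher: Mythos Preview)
The paper does not prove this proposition at all: it is quoted verbatim from \cite[Corollary~4.4, Section~4.8]{ChenGolsteinShao} purely as motivation for Theorem~\ref{th:fast}, and no proof is supplied. Your sketch of the leave-one-out Stein expansion is the standard argument from that reference and is structurally sound; since there is no proof in the paper to compare against, there is nothing further to reconcile.
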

\noindent
The next result improves the rate of convergence $T^{-1/2}$ of
Theorem~\ref{th:main2} to $T^{-1}$ under additional assumptions on the random sequence $(X_i)_i$. 
\begin{theorem}
\label{th:fast}
Assume that Assumption~\ref{assumptionPhi} holds,
that $\mu \geq 0$, and
that
$$ \E[X_1] = \E\big[X_1^3\big] =0, \quad \E[X_1^4] = \int_{-\infty}^\infty x^4 \nu(dx) <+\infty,
$$
and let
$$\gamma^2:= \mu \frac{\vartheta^2}{1-\|\phi\|_1}
\quad
\mbox{and}
\quad
\vartheta^2:=\E[X_1^2].
$$
Then, there exists $C_{\mu,\phi,\vartheta} >0$ (depending only on $\mu, \|\phi\|_1, \vartheta$) such that 
\begin{equation}
\nonumber 
d_{4,\infty}({S_T}/\sqrt{T},\mathcal{N}(0,\gamma^2)) \leq \frac{C_{\mu,\phi,\vartheta}}{T},
\qquad T>0. 
\end{equation}
\end{theorem}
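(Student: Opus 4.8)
The plan is to condition on the Hawkes path and reduce to the i.i.d.\ ``third moment phenomenon'' of \cite{ChenGolsteinShao}. The case $\mu=0$ is degenerate ($H\equiv 0$, $S\equiv 0$, $\gamma^2=0$), and for $T\le 1$ the distance is trivially $O(1)\le C/T$, so I assume $\mu>0$ and $T\ge1$. Fix a test function $h$ with $\|h^{(k)}\|_\infty\le1$, $k=1,\dots,4$, normalized so that $h(0)=0$, and set $g(\sigma^2):=\E[h(\sigma Z)]$ with $Z\sim\mathcal N(0,1)$. Since $(X_i)_i$ is independent of $H$, conditionally on $\mathcal F_\infty^H$ the $X_i$ remain i.i.d.\ with law $\nu$ while $H_T$ becomes deterministic; the conditional law of $S_T/\sqrt T$ then has variance $\vartheta^2H_T/T$. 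Inserting the intermediate law $\mathcal N(0,\vartheta^2H_T/T)$ and using the tower property, I would bound
\[
\big|\E[h(S_T/\sqrt T)]-g(\gamma^2)\big|\le \E\big[\,\big|\E_X[h(S_T/\sqrt T)\mid\mathcal F_\infty^H]-g(\vartheta^2H_T/T)\big|\,\big]+\big|\E\big[g(\vartheta^2H_T/T)-g(\gamma^2)\big]\big|.
\]

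For the first term, on $\{H_T=n\}$ with $n\ge1$ (the event $\{H_T=0\}$ contributes $0$) the conditional law of $S_T/\sqrt T$ is that of $a_n\,n^{-1/2}\sum_{i=1}^n(X_i/\vartheta)$ with $a_n=\vartheta\sqrt{n/T}$, while $\mathcal N(0,\vartheta^2n/T)$ is the law of $a_nZ$. Applying the quoted bound of \cite{ChenGolsteinShao} to $(X_i/\vartheta)_i$ together with the elementary rescaling estimate $d_{4,\infty}(aX,aY)\le(1\vee a^4)\,d_{4,\infty}(X,Y)$ (apply the test functions to $a\,\cdot\,$) gives a fibrewise bound $\lesssim n^{-1}(1+\vartheta^4n^2/T^2)$, hence after averaging a contribution $\lesssim \E[\mathbf 1_{\{H_T\ge1\}}/H_T]+\vartheta^4T^{-2}\E[H_T]$. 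I would control $\E[\mathbf 1_{\{H_T\ge1\}}/H_T]\le 2\,\E[1/(1+P_T)]\le 2/(\mu T)$ using the pathwise minorant $H_T\ge P_T:=N((0,T]\times[0,\mu]\times\real)\sim\mathrm{Poisson}(\mu T)$ (valid because $\lambda_s\ge\mu$), and $\E[H_T]\le \mu T/(1-\|\phi\|_1)$; both terms are then $O(1/T)$.

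For the second term, the heat-equation identities $g'(t)=\tfrac12\E[h''(\sqrt tZ)]$, $g''(t)=\tfrac14\E[h''''(\sqrt tZ)]$ give $\|g'\|_\infty\le\tfrac12$, $\|g''\|_\infty\le\tfrac14$, so a second-order Taylor expansion around $\gamma^2=\vartheta^2\mu/(1-\|\phi\|_1)$ bounds it by
\[
\tfrac12\vartheta^2\Big|\frac{\E[H_T]}{T}-\frac{\mu}{1-\|\phi\|_1}\Big|+\tfrac18\vartheta^4\,\mathrm{Var}(H_T)/T^2+\tfrac18\vartheta^4\Big(\frac{\E[H_T]}{T}-\frac{\mu}{1-\|\phi\|_1}\Big)^2.
\]
Here $\E[H_T]=\int_0^T\mu\big(1+\int_0^t\psi(s)\,ds\big)\,dt$, so $0\le \mu/(1-\|\phi\|_1)-\E[H_T]/T=T^{-1}\int_0^T\mu\int_t^\infty\psi(s)\,ds\,dt\le T^{-1}\mu\int_0^\infty s\,\psi(s)\,ds=T^{-1}\mu(1-\|\phi\|_1)^{-2}\int_0^\infty u\phi(u)\,du$, which is $O(1/T)$ precisely by Assumption~\ref{assumptionPhi}; and $\mathrm{Var}(H_T)\le C_{\mu,\phi}T$ is the classical second-order estimate for stable linear Hawkes processes (cf.\ \cite{Bacry_et_al_2013}). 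Hence this term is also $O(1/T)$, and taking the supremum over $h$ completes the proof, with $C_{\mu,\phi,\vartheta}$ depending only on $\mu$, $\|\phi\|_1$, $\int_0^\infty u\phi(u)\,du$, $\vartheta$ and $\E[X_1^4]$.

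The hard part — and the reason $T^{-1}$ is out of reach for the Wasserstein distance — is the variance-mismatch term: a naive bound $\|g'\|_\infty\E[|\vartheta^2H_T/T-\gamma^2|]$ is only $O(1/\sqrt T)$ because $\mathrm{Var}(H_T)\asymp T$. Two points save the rate: one bounds $\big|\E[\cdot]\big|$ rather than $\E[|\cdot|]$, so the first-order Taylor contribution is the genuine mean deviation $\E[H_T]/T-\mu/(1-\|\phi\|_1)=O(1/T)$ (which is exactly where $\int u\phi<\infty$ is used), and the extra smoothness of $h$ turns the remainder into a true second-order term, seen through $\mathrm{Var}(H_T)/T^2=O(1/T)$ rather than its square root. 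The remaining ingredients — the conditional Chen--Goldstein--Shao estimate and the negative-moment bound $\E[\mathbf 1_{\{H_T\ge1\}}/H_T]=O(1/T)$ from the Poisson minorant — are comparatively routine.
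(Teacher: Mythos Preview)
Your proof is correct and takes a route quite different from the paper's. The paper works directly with the Malliavin--Stein functional $\sup_f|\E[\gamma^2 f'(S_T)-S_Tf(S_T)]|$: it applies the Hawkes integration-by-parts formula~\eqref{eq:IBP}, expands $D_{(t,x)}f(S_T)$ to \emph{third} order, uses the commutation relation~\eqref{eq:Ddelta} and the cancellation $\E_t[f'(F)\delta^N(z\widehat{\mathcal Z}^t)]=0$ of Lemma~\ref{lemma:lemma1}, and arrives at four terms $A_1,\dots,A_4$. The hypotheses $\int x\,\nu(dx)=\int x^3\,\nu(dx)=0$ kill two of them outright; the survivors are shown to be $O(T^{-1})$ through the auxiliary estimates on $\widehat\lambda^t$ and $R_{t,T}$ (Lemmas~4.3 and~\ref{lemma:contrhat2}) and a further integration by parts inside the term $A_2$.

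You never touch the Malliavin derivative of $S_T$. Conditioning on $\mathcal F_\infty^H$ reduces the Gaussian-approximation step to the off-the-shelf i.i.d.\ Chen--Goldstein--Shao bound (this is where the vanishing third moment is spent, yielding the $O(1/n)$ fibrewise rate), together with the Poisson minorant $H_T\ge P_T$ to control $\E[\mathbf 1_{\{H_T\ge1\}}/H_T]$. The variance-mismatch step is then handled by a second-order Taylor expansion of $g(\sigma^2)=\E[h(\sigma Z)]$, with the heat-equation bounds $\|g'\|_\infty\le\tfrac12$, $\|g''\|_\infty\le\tfrac14$ converting the extra smoothness of $h$ into the crucial gain $|\E[\,\cdot\,]|$ versus $\E[|\cdot|]$. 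Your argument is more elementary and cleanly separates the ``marks'' contribution from the ``intensity'' contribution; the paper's argument stays within the Malliavin--Stein framework developed in Sections~\ref{section:preliminaires}--\ref{section:main} and would adapt more readily to integrands or marks that are not independent of the Hawkes skeleton. Both proofs ultimately rest on the same second-order input for the Hawkes process ($\E[H_T]/T\to\mu/(1-\|\phi\|_1)$ at rate $O(1/T)$ via $\int u\phi(u)\,du<\infty$, and $\mathrm{Var}(H_T)=O(T)$), so the final constants have the same qualitative dependence on $\mu$, $\|\phi\|_1$, $\int u\phi$, $\vartheta$ and $\E[X_1^4]$.
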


\noindent
 Finally, as \cite{HHKR}, we provide an alternative quantitative limit theorem by replacing the intensity process in the renormalisation with its {asymptotic} expectation. 
 This result extends the Wasserstein bound of \cite[Theorem~3.13]{HHKR}
 from simple Hawkes processes to compound Hawkes processes. 
\begin{theorem} 
\label{th:alternativ}
Assume that $\E[X_1^2] <+\infty$ and that Assumption~\ref{assumptionPhi} holds,
 and let 
 $$
 \varpi:=\mu \frac{\E[X_1]}{1-\|\phi\|_1}
\quad
\mbox{and}
 \quad 
 \Gamma_T:=\frac{S_T-\varpi T}{\sqrt T}, \quad T>0.
$$
 Then, 
 there exists $C_{\mu,\phi,\vartheta} >0$ (depending only on $\mu,\|\phi\|_1,\vartheta$)
 such that 
$$
 d_W (\Gamma_T,\mathcal N(0,\zeta^2) )
 \leq \frac{C_{\mu,\phi,\vartheta}}{\sqrt T}, \quad T>0,
 $$
where 
$$\zeta^2:=\mu\frac{\vartheta^2+\|\phi\|_1(\vartheta^2- ( \E[X_1] )^2)(\|\phi\|_1-2)}{(1-\|\phi\|_1)^3}. 
$$
\end{theorem}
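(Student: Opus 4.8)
The plan is to reduce the statement to an application of Theorem~\ref{th:main2} (or, more precisely, to the same machinery via Theorem~\ref{th:main1}) by carefully comparing the two centering terms. Write
\[
\Gamma_T = \frac{S_T - \E[X_1]\int_0^T \lambda_t\,dt}{\sqrt{T}} + \E[X_1]\,\frac{\int_0^T \lambda_t\,dt - \mu T/(1-\|\phi\|_1)}{\sqrt{T}} =: F_T + \E[X_1]\,R_T,
\]
where $F_T$ is exactly the functional handled in Theorem~\ref{th:main2}, for which we already know $d_W(F_T,\mathcal N(0,\gamma^2)) \le C_{\mu,\phi,\vartheta}/\sqrt T$ with $\gamma^2 = \mu\vartheta^2/(1-\|\phi\|_1)$. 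The first step is therefore to control $R_T$: using the linear Hawkes identity $\E[\lambda_t] = \mu + \int_0^t \phi(t-s)\E[\lambda_s]\,ds$, whose solution is $\E[\lambda_t] = \mu/(1-\|\phi\|_1) - \mu\int_t^\infty \psi(s-?)\dots$, more cleanly $\E[\lambda_t] = \mu(1 + \int_0^t \psi(u)\,du)$ with $\psi$ as in \eqref{eq:Psi}, one gets $\E\big[\int_0^T \lambda_t\,dt\big] - \mu T/(1-\|\phi\|_1) = -\mu\int_0^T\big(\int_{T-t}^\infty \psi(u)\,du\big)dt$, which is $O(1)$ by Assumption~\ref{assumptionPhi} (the moment condition $\int_0^\infty u\phi(u)\,du<\infty$ forces $\int_0^\infty u\psi(u)\,du<\infty$, hence $\int_0^T (\int_v^\infty \psi)\,dv = O(1)$). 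Thus $\E[R_T] = O(1/\sqrt T)$.

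The harder part is the fluctuation of $R_T$, i.e. bounding $\mathrm{Var}\big(\int_0^T \lambda_t\,dt\big)$, since a priori this is of order $T$, making $\mathrm{Var}(R_T) = O(1)$ rather than $o(1)$. This is precisely why the limiting variance $\zeta^2$ differs from $\gamma^2$: the term $\E[X_1]R_T$ does \emph{not} vanish and contributes a nondegenerate Gaussian component correlated with $F_T$. So instead of treating $\E[X_1]R_T$ as a negligible remainder, I would apply Theorem~\ref{th:main1} directly to $\Gamma_T$ written as a single stochastic integral. The point is that $\int_0^T \lambda_t\,dt = \mu T + \int_0^T\int_{(0,t)}\phi(t-s)\,dH_s\,dt = \mu T + \int_{(0,T)}\Phi(T-s)\,dH_s$ with $\Phi(v) := \int_0^v \phi(u)\,du$, so
\[
S_T - \varpi T = \int_{(0,T]\times\real} x\,(dH_t\,\text{-part}) - \E[X_1]\int_{(0,T)}\Phi(T-s)\,dH_s + (\text{deterministic }O(1)),
\]
and after subtracting the compensator this becomes $\delta^N(z^T\mathcal Z)$ for an explicit deterministic integrand $z^T_{(t,x)} = (x - \E[X_1]\Phi(T-t))/\sqrt T$ (up to lower-order compensator corrections), which is exactly the setting of Theorem~\ref{th:main1}(i). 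One then plugs this $z^T$ into the bound \eqref{eq:rela1}: the first term $\E\big[\,|\zeta^2 - \int |z^T(t,x)|^2\lambda_t\,dt\,\nu(dx)|\,\big]$ is handled by computing $\E\big[\int_0^T(\vartheta - \E[X_1]\Phi(T-t))^2 \dots \lambda_t\,dt\big]$ — wait, more carefully $\int_\real |z^T(t,x)|^2\nu(dx) = \frac1T\big(\E[X_1^2] - 2\E[X_1]^2\Phi(T-t) + \E[X_1]^2\Phi(T-t)^2\big)$ — and showing its $\lambda$-average concentrates around $\zeta^2$ at rate $1/\sqrt T$ via the explicit $\E[\lambda_t]$ formula and a variance estimate on $\frac1T\int_0^T g(T-t)\lambda_t\,dt$; the Malliavin term is handled as in the proof of Theorem~\ref{th:main2}.

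The key computation is verifying that the resulting asymptotic variance is indeed $\zeta^2 = \mu\big(\vartheta^2 + \|\phi\|_1(\vartheta^2-\E[X_1]^2)(\|\phi\|_1-2)\big)/(1-\|\phi\|_1)^3$. This comes from $\zeta^2 = \lim_{T\to\infty}\frac1T\E\big[\int_0^T(\vartheta^2 - 2\E[X_1]^2\Phi(T-t) + \E[X_1]^2\Phi(T-t)^2)\lambda_t\,dt\big]$, using $\frac1T\int_0^T \E[\lambda_t]\,dt \to \mu/(1-\|\phi\|_1)$, $\frac1T\int_0^T \Phi(T-t)\E[\lambda_t]\,dt \to \|\phi\|_1\mu/(1-\|\phi\|_1)$, and $\frac1T\int_0^T\Phi(T-t)^2\E[\lambda_t]\,dt \to \|\phi\|_1^2\mu/(1-\|\phi\|_1)$ (each Cesàro limit of a bounded kernel against the stationary intensity $\mu/(1-\|\phi\|_1)$, since $\Phi(\infty)=\|\phi\|_1$), and then simplifying $\vartheta^2 - 2\E[X_1]^2\|\phi\|_1 + \E[X_1]^2\|\phi\|_1^2$ — this gives $\big(\vartheta^2 + \E[X_1]^2\|\phi\|_1(\|\phi\|_1-2)\big)\mu/(1-\|\phi\|_1)$, which is \emph{not} quite $\zeta^2$: the extra $(1-\|\phi\|_1)^2$ in the denominator and the correction $\vartheta^2 - \E[X_1]^2$ in the cross term come from the fact that $\int_0^T\Phi(T-t)\lambda_t\,dt$ itself has fluctuations of order $\sqrt T$ correlated with $\int_0^T x\,dH$, so the naive "freeze the intensity" heuristic must be replaced by the exact second-moment identity for $\delta^N(z^T\mathcal Z)$; tracking this extra cross-variance carefully is the main obstacle and is where Assumption~\ref{assumptionPhi} and the fluctuation estimate $\mathrm{Var}\big(\frac1{\sqrt T}\int_0^T g(T-t)(dH_t - \lambda_t\,dt)\big) = O(1)$ enter decisively. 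The rest — $d_{4,\infty}$ versus $d_W$ is not relevant here since we only need $d_W$ — is routine and parallels the proof of Theorem~\ref{th:main2}.
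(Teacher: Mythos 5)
Your overall plan---write $V_T$ (or $\Gamma_T$) as a single divergence $\delta^N(z^T\mathcal{Z})$ with a \emph{deterministic, $t$-dependent} integrand and then apply Theorem~\ref{th:main1}(i)---is a viable route, and it is genuinely different from the paper's. However, your proposed kernel is wrong, and this is exactly why your own back-of-envelope variance computation fails to produce $\zeta^2$ (you noticed the mismatch but did not repair it). The integrand $z^T(t,x)=(x-\E[X_1]\Phi(T-t))/\sqrt{T}$ with $\Phi(v)=\int_0^v\phi(u)\,du$ is off both in sign and in the kernel function: when you push $m\int_{(0,T)}\Phi(T-s)\,dH_s$ into the jump part of the divergence, the compensator $m\int_0^T\Phi(T-t)\lambda_t\,dt$ comes back and $\int_0^T\Phi(T-t)\lambda_t\,dt$ is itself random, so a single step of your substitution does not close. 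One must iterate, and the closed form of the iteration replaces $\Phi$ by the resolvent primitive $\Psi(v):=\int_0^v\psi(u)\,du$ with $\psi=\sum_{n\ge1}\phi^{*n}$. Concretely, integrating by parts in \cite[Lemma~4]{Bacry_et_al_2013} gives the \emph{exact} identity
\[
\sqrt{T}\,V_T \ = \ S_T-m\,\E[H_T] \ = \ \delta^N\!\left(\big(x+m\,\Psi(T-t)\big)\mathds{1}_{\{t\le T\}}\,\mathcal{Z}\right),
\]
since $S_T-mH_T=\delta^N((x-m)\mathds{1}_{\{t\le T\}}\mathcal{Z})$ (the compensator vanishes) and $H_T-\E[H_T]=\int_0^T(1+\Psi(T-s))\,d\mathcal{M}_s=\delta^N((1+\Psi(T-t))\mathds{1}_{\{t\le T\}}\mathcal{Z})$. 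With the corrected kernel $x+m\Psi(T-t)$, the Cesàro variance computation does land on $\zeta^2$: using $\Psi(\infty)=\|\psi\|_1=\|\phi\|_1/(1-\|\phi\|_1)$ and $\E[\lambda_t]\to\mu/(1-\|\phi\|_1)$, one finds
\[
\frac{1}{T}\int_0^T\!\!\int_{\real}\big(x+m\Psi(T-t)\big)^2\nu(dx)\,\E[\lambda_t]\,dt
\;\longrightarrow\;
\frac{\mu}{1-\|\phi\|_1}\Big(\vartheta^2+m^2\|\psi\|_1(2+\|\psi\|_1)\Big)
=\zeta^2,
\]
so the ``extra $(1-\|\phi\|_1)^2$'' and the $(\vartheta^2-m^2)$ cross term you flagged are already encoded in the kernel; there is no need for a separate cross-variance correction once the kernel is right.

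For comparison, the paper does not keep the $t$-dependent kernel: it multiplies by $(1-\|\phi\|_1)$ and approximates $(1-\|\phi\|_1)(x+m\Psi(T-t))$ by its $t\to-\infty$ limit, the \emph{constant} coefficient $(1-\|\phi\|_1)x+\|\phi\|_1 m$, writing
$(1-\|\phi\|_1)V_T=\delta^N\big(\tfrac{x+(m-x)\|\phi\|_1}{\sqrt{T}}\mathds{1}_{[0,T]}\mathcal{Z}\big)+m(1-\|\phi\|_1)\mathfrak{R}_T$
and controlling the remainder via Lemma~\ref{lemma:Rfrak} (which is exactly the $L^2$ bound on the error made when $\Psi(T-t)$ is replaced by $\|\psi\|_1$). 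Either route works, but if you keep the time-dependent kernel you must (a) prove the exact divergence identity above, and (b) show that the first term in \eqref{eq:rela1}, namely $\E\big|\zeta^2-\tfrac1T\int_0^T\!\!\int(x+m\Psi(T-t))^2\nu(dx)\lambda_t\,dt\big|$, decays as $T^{-1/2}$; this requires both the convergence rate of $\E[\lambda_t]$ (from \cite[Lemma~4.1]{HHKR}) and the integrability $\int_0^\infty(\|\psi\|_1-\Psi(v))\,dv<\infty$, which holds under Assumption~\ref{assumptionPhi} because $\int_0^\infty u\psi(u)\,du<\infty$. Finally, passing from $V_T$ to $\Gamma_T$ is the deterministic $O(T^{-1/2})$ shift at the very end, as the paper does via \cite[Lemma~5]{Bacry_et_al_2013}.

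In short: the architecture of your argument is sound and even somewhat more direct than the paper's, but the kernel must be $x+m\Psi(T-t)$, not $x-m\Phi(T-t)$. As written, the wrong kernel makes both the representation of $\Gamma_T$ and the variance identification incorrect, and the proof does not close.
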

\section{Proofs}
\label{section:proofs}

\subsection{Technical lemmata}

\begin{lemma}
\label{lemma:lemma1}
Let $z:=(z(t,x))_{(t,x) \in \real_+\times \real} \in L^2(\real_+\times \real, dt\otimes \nu)$
and consider $\mathcal{Z}$ defined in \eqref{eq:calZ}.
Then for any $t\geq 0$ and $\varphi : \real \to \real$ such that $\varphi\big(\delta^N(z \mathcal{Z})\big)$ belongs to $L^2\big(\Omega^N,\P\big)$, we have 
$\E_t\big[\varphi\big( \delta^N(z \mathcal{Z})\big)\delta^N\big(z \widehat{\mathcal{Z}}^t\big)\big]=0$. 
\end{lemma}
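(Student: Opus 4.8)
The plan is to establish the stronger identity $\E\big[\textbf{1}_A\,\varphi(F)\,\delta^N\big(z\widehat{\mathcal{Z}}^t\big)\big]=0$ for every $A\in\mathcal{F}^N_t$, where $F:=\delta^N(z\mathcal{Z})$. Since $\varphi(F)\in L^2(\Omega^N,\P)$ by hypothesis and $\delta^N(z\widehat{\mathcal{Z}}^t)\in L^2(\Omega^N,\P)$ (because $z\widehat{\mathcal{Z}}^t\in\mathcal{P}^N_2$, as discussed next), the product is integrable, and this identity is equivalent to the asserted $\E_t\big[\varphi(F)\,\delta^N(z\widehat{\mathcal{Z}}^t)\big]=0$.

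First I would verify that $z\widehat{\mathcal{Z}}^t\in\mathcal{P}^N_2$. The process $(r,\theta,x)\mapsto z(r,x)\widehat{\mathcal{Z}}^t_{(r,\theta)}$ is $\mathbb{F}^N$-predictable: on $\{r>t\}$ both $\lambda_r$ and $\lambda_r\circ\varepsilon^+_{(t,0,1)}$ are $\mathcal{F}^N_{r-}$-measurable, the latter because by the thinning construction \eqref{eq:H} the jumps of $H\circ\varepsilon^+_{(t,0,1)}$ before time $r$ are a function of the restriction of $N$ to $[0,r)\times\real_+\times\real$ together with the deterministic atom at $(t,0,1)$. Its $L^2(dr\,d\theta\,\nu)$-norm equals, by Tonelli's theorem, $\int_{(t,\infty)\times\real}|z(r,x)|^2\,\E\big[\lambda_r\circ\varepsilon^+_{(t,0,1)}-\lambda_r\big]\,dr\,\nu(dx)$, which is finite because $z\in L^2(\real_+\times\real,dt\otimes\nu)$ and $r\mapsto\E[\lambda_r\circ\varepsilon^+_{(t,0,1)}-\lambda_r]$ is bounded, by standard moment estimates for the Hawkes intensity.

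Then I would apply the Poisson integration by parts formula \eqref{eq:IBPPoisson} with $G:=\textbf{1}_A\,\varphi(F)\in L^2(\Omega^N,\P)$ and $\rho:=z\widehat{\mathcal{Z}}^t$, giving
$$\E\big[\textbf{1}_A\,\varphi(F)\,\delta^N(z\widehat{\mathcal{Z}}^t)\big] = \E\left[\int_{\real_+^2\times\real} z(r,x)\,\widehat{\mathcal{Z}}^t_{(r,\theta)}\,D_{(r,\theta,x)}\big(\textbf{1}_A\,\varphi(F)\big)\,dr\,d\theta\,\nu(dx)\right].$$
On the support of $\widehat{\mathcal{Z}}^t$ one has $r>t$, so since $\textbf{1}_A\circ\varepsilon^+_{(r,\theta,x)}=\textbf{1}_A$ for $r>t$ (a property of $\mathcal{F}^N_t$-measurable functionals noted after the adding point operator), we get $D_{(r,\theta,x)}\big(\textbf{1}_A\,\varphi(F)\big)=\textbf{1}_A\,D_{(r,\theta,x)}\varphi(F)$ there.

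The key step, which I expect to be the main point to get right, is that $D_{(r,\theta,x)}\varphi(F)=0$ on the support of $\widehat{\mathcal{Z}}^t$, which is contained in $\{\theta>\lambda_r\}$. I would argue that adding an atom to $N$ at a point $(r,\theta,x)$ with $\theta>\lambda_r$ leaves the Hawkes pair $(H,\lambda)$ — hence also $S$ and $F$ — unchanged: the value $\lambda_r$ depends only on the jumps of $H$ strictly before time $r$ (see \eqref{eq:lambda}) and is therefore unaffected, so the new atom, lying strictly above the intensity at time $r$, is not thinned into a jump of $H\circ\varepsilon^+_{(r,\theta,x)}$; solving \eqref{eq:H} forward from time $r$, the unchanged intensity reproduces exactly the same jumps on $(r,+\infty)$, so that $H\circ\varepsilon^+_{(r,\theta,x)}=H$ and thus $F\circ\varepsilon^+_{(r,\theta,x)}=F$, whence $D_{(r,\theta,x)}\varphi(F)=\varphi(F\circ\varepsilon^+_{(r,\theta,x)})-\varphi(F)=0$. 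Inserting this into the display above, the integrand vanishes $dr\,d\theta\,\nu$-a.e. on $\{\widehat{\mathcal{Z}}^t\neq 0\}$, so the right-hand side is $0$ for every $A\in\mathcal{F}^N_t$, which is precisely the statement of the lemma.
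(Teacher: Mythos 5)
Your proof is correct and follows essentially the same route as the paper: apply the Poisson integration by parts formula, observe that the support of $\widehat{\mathcal{Z}}^t$ lies in $\{\theta>\lambda_r\}$, and argue via uniqueness of the thinning SDE that adding a point above the intensity leaves $(H,\lambda)$, hence $F$, unchanged, so the relevant Malliavin derivative vanishes. The only (harmless) variation is that you replace the paper's conditional integration by parts by testing against $\textbf{1}_A$ with $A\in\mathcal{F}^N_t$ and using the unconditional formula, which is a standard equivalent formulation.
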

\begin{proof}
 Letting $F:=\delta^N(z \mathcal{Z})$, we have 
\begin{eqnarray*}
   \E_t\big[\varphi(F) \delta^N\big(z \widehat{\mathcal{Z}}^t\big) \big]
&=& \int_t^\infty \int_{\real_+ \times \real} \E_t\big[z(s,y) \textbf{1}_{\{\lambda_s < \theta \leq \lambda_s \circ \varepsilon_{(t,0,x)}^+\}} D_{(s,\theta,y)} \varphi(F) \big] d\theta \nu(dy) ds, 
\end{eqnarray*}
 with, by definition, 
\begin{align} 
  \nonumber
 &   \textbf{1}_{\{\lambda_s < \theta\}} D_{(s,\theta,y)} \varphi(F) 
  = \textbf{1}_{\{\lambda_s < \theta\}} \left[ \varphi\left( \left(\int_{\real_+^2 \times \real} z(t,x) \mathcal{Z}_{t,\rho} (N(dt,d\rho,dx)-dt d\rho \nu(dx))\right) \circ \epsilon_{(s,\theta,y)}^+\right) \right.
    \\
\label{eq:templemma1} 
 & \left.
 \quad - \varphi\left(\int_{\real_+^2 \times \real} z(t,x) \mathcal{Z}_{(t,\rho)} (N(dt,d\rho,dx)-dt d\rho \nu(dx))\right)\right]
\\
 \nonumber
& =  0,
\end{align} 
as $\textbf{1}_{\{\lambda_s < \theta\}} \mathcal{Z}_{(s,\theta)} = 0$.
More precisely, let 
$$ F_t:= \int_{[0,t]\times \real_+ \times \real} z(s,x) \mathcal{Z}_{s,\rho} (N(ds,d\rho,dx)-ds d\rho \nu(dx)),
\qquad
 t\geq 0,
$$
so that 
$$ F_\infty = \int_{\real_+^2 \times \real} z(s,x) \mathcal{Z}_{s,\rho} (N(ds,d\rho,dx)-ds d\rho \nu(dx)),
$$
 and $(F_t,\lambda_t)_{t\in \real}$ is solution to the SDE 
\begin{equation*}
\left\lbrace
\begin{array}{l}
  F_t = \displaystyle
  \int_{(0,t]\times \real_+\times \real} z(s,x) \textbf{1}_{\{\rho \leq \lambda_s\}} (N(ds,d\rho,dx)-ds d\rho \nu(dx)), \quad t \geq 0,

\medskip

\\
\lambda_t = \displaystyle \mu + \int_{(0,t)} \phi(t-u) d H_u,\quad t \geq 0 .
\end{array}
\right.
\end{equation*}
 Next, fix $(s,\theta,y) \in \real_+^2\times \real$.
 As $\textbf{1}_{\{\lambda_s < \theta\}} \mathcal{Z}_{(s,\theta)} = 0$, we have  
$$F_t \circ \varepsilon_{(s,\theta,y)}^+ = F_t, \quad \lambda_{t} \circ \varepsilon_{(s,\theta,y)}^+ = \lambda_{t}, \quad 0 \leq t < s,
$$
and 
$$\textbf{1}_{\{\lambda_s < \theta\}} F_s \circ \varepsilon_{(s,\theta,y)}^+ = \textbf{1}_{\{\lambda_s < \theta\}} F_s, \quad \textbf{1}_{\{\lambda_s < \theta\}}  \lambda_{s^+} \circ \varepsilon_{(s,\theta,y)}^+ = \textbf{1}_{\{\lambda_s < \theta\}}  \lambda_{s^+}.
$$ 
 As the process $\lambda_{s^+} \circ \varepsilon_{(s,\theta,y)}^+$ triggers the jumps of $F \circ \varepsilon_{(s,\theta,y)}^+$ and since it coincides with $\lambda$ on $[0,s]$, the pair $(F \circ \varepsilon_{(s,\theta,y)}^+, \lambda \circ \varepsilon_{(s,\theta,y)}^+)$ solves the same SDE as $(F,\lambda)$ and thus coincides with it by uniqueness of the solution\footnote{As $F$ is a counting process and $\lambda$ is deterministic between two jumps times of $F$, the uniqueness property is proved pathwise by considering the jump times of $F$ which are completely determined by the Poisson point process $N$ and the intensity process $\lambda$. We provide the details on the method of proof in the proof of Lemma~\ref{lemma:lemma2} below where a comparison principle is derived.}; which yields the last equality in~\eqref{eq:templemma1}. 
\end{proof}

\begin{lemma}
\label{lemma:lemma2}
 For all $(t,\eta , x)\in \real_+^2 \times \real$, it holds that:
$$ D_{(t,\eta ,x)} \lambda_s \geq 0, \quad ds\otimes \nu-\textrm{ a.e.}. $$
\end{lemma}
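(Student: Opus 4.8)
The plan is to show that adding a point to the Poisson configuration can only increase the intensity process $\lambda$, and to do so via a pathwise comparison argument on the SDE \eqref{eq:H}. Fix $(t,\eta,x) \in \real_+^2 \times \real$ and write $\widetilde{\lambda}_s := \lambda_s \circ \varepsilon_{(t,\eta,x)}^+$ and $\widetilde{H}_s := H_s \circ \varepsilon_{(t,\eta,x)}^+$ for the intensity and counting process obtained after adding the atom $(t,\eta,x)$; the pair $(\widetilde{H},\widetilde{\lambda})$ solves the same system \eqref{eq:H} but driven by $\varepsilon_{(t,\eta,x)}^+(N) = N + \delta_{(t,\eta,x)}$. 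For $s < t$ neither process sees the added atom, so $\widetilde{\lambda}_s = \lambda_s$ and $\widetilde H_s = H_s$, hence $D_{(t,\eta,x)}\lambda_s = 0$ there. For $s \geq t$, the claim is that $\widetilde H$ dominates $H$ in the sense that $\widetilde H$ has all the jumps of $H$ plus possibly more, which forces $\widetilde{\lambda}_s \geq \lambda_s$ since $\phi \geq 0$.

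The key step is to make this domination rigorous pathwise. I would proceed by induction on the successive jump times of the \emph{merged} point process. Between consecutive atoms of $\varepsilon_{(t,\eta,x)}^+(N)$ in the $(s,\theta)$-strip, both $\lambda$ and $\widetilde\lambda$ are deterministic and nonincreasing (they decay according to $\phi$ with no new mass), and whether a given atom $(s,\theta,y)$ of $N$ produces a jump of $H$ (resp. $\widetilde H$) depends only on the test $\theta \leq \lambda_s$ (resp. $\theta \leq \widetilde\lambda_s$). Suppose inductively that $\widetilde\lambda_{u} \geq \lambda_u$ on $[0,s)$ and that every jump of $H$ before $s$ is also a jump of $\widetilde H$. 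Then at the next atom: if it triggers a jump of $H$, i.e. $\theta \leq \lambda_s \leq \widetilde\lambda_s$, it also triggers a jump of $\widetilde H$; moreover the added atom $(t,\eta,x)$ itself either triggers an extra jump of $\widetilde H$ (if $\eta \leq \widetilde\lambda_t$) or not, but in either case it only adds mass. In all cases $\widetilde H$ gains at least the increments of $H$, so $\widetilde\lambda_{s} = \mu + \int_{(0,s)}\phi(s-u)\,d\widetilde H_u \geq \mu + \int_{(0,s)}\phi(s-u)\,dH_u = \lambda_s$, closing the induction. Since the jump times are $\P$-a.s. locally finite (the intensity is a.s. bounded on compacts, by $\|\phi\|_1 < 1$ and standard Hawkes estimates), this determines the paths completely and gives $\widetilde\lambda_s \geq \lambda_s$ for all $s \geq 0$, $\P$-a.s., hence $D_{(t,\eta,x)}\lambda_s = \widetilde\lambda_s - \lambda_s \geq 0$ for $ds$-a.e. $s$; the statement is $ds \otimes \nu$-a.e. because $\lambda$ does not depend on the mark variable $x$ (cf. \eqref{eq:templambdashift}).

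The main obstacle is the bookkeeping in the induction: one must handle carefully the possibility that the added atom $(t,\eta,x)$ falls strictly below $\widetilde\lambda_t$ but above $\lambda_t$, creating a genuinely new jump that then propagates forward and widens the gap $\widetilde\lambda - \lambda$ at later times, as well as the degenerate case $\eta > \widetilde\lambda_t$ where nothing changes and $D_{(t,\eta,x)}\lambda_s = 0$. A clean way to organize this is to phrase it as a comparison principle for the coupled SDEs driven by $N$ and $N + \delta_{(t,\eta,x)}$ respectively, proved by a Grönwall-type or direct iteration argument on the jump times; this is exactly the ``comparison principle'' alluded to in the footnote of Lemma \ref{lemma:lemma1}, and the uniqueness of solutions to \eqref{eq:H} from \cite[Theorem~3.3]{Hillairet_Reveillac_Rosenbaum} guarantees that the pathwise construction is well posed.
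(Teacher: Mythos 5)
Your proposal is correct and follows essentially the same approach as the paper: a pathwise comparison principle for the Hawkes SDE driven by $N$ versus $N + \delta_{(t,\eta,x)}$, established by induction on the successive jump times of the merged configuration. Both arguments rest on the same loop invariant — if $\lambda \circ \varepsilon^+_{(t,\eta,x)} \geq \lambda$ on $[0,\tau_n)$, then any atom $(\tau_n,\theta,y)$ triggering a jump of $H$ (i.e. $\theta \leq \lambda_{\tau_n}$) also triggers a jump of $H \circ \varepsilon^+_{(t,\eta,x)}$, so the inclusion of jump sets and hence the ordering of intensities propagates. You even explicitly treat the degenerate case $\eta > \lambda_t$ where the added atom does not create a jump and the derivative is zero; the paper glosses over this by writing the extra $\phi(s-t)$ term in \eqref{eq:templambdashift} unconditionally, so your handling is in fact slightly more careful.

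One small slip worth flagging: the parenthetical remark that ``both $\lambda$ and $\widetilde\lambda$ are deterministic and nonincreasing (they decay according to $\phi$ with no new mass)'' between consecutive atoms is not correct in general. Assumption~\ref{assumptionPhi} imposes no monotonicity on $\phi$, and for a non-monotone kernel such as the Erlang kernel $\phi(x) = \alpha x e^{-\beta x}$ the intensity can increase between jump times. Fortunately this claim plays no role in your argument — all the induction step needs is $\phi \geq 0$ so that including more jumps in the integral $\int_{(0,s)} \phi(s-u)\,d\widetilde H_u$ can only raise $\widetilde\lambda_s$ — so the proof stands after deleting the word ``nonincreasing''.
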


\begin{proof}
  The main proof idea relies on
  a comparison principle for the specific SDE involved,
  assessing that if intensity processes are ordered over
  the whole past (and not at some given time only),
  then this ordering between counting processes 
  and intensities is preserved at future times. 
We recall that $D_{(t,\eta ,x)} \lambda_s = \lambda_s \circ \epsilon_{(t,\eta ,x)}^+ - \lambda_s$. The conclusion follows from the definition of the intensity process $\lambda$ which is the unique solution to an SDE with respect to $N$ derived from \eqref{eq:H}:
$$ \lambda_s = \mu + \int_{(0,s)\times \real_+\times \real } \phi(s-u) \textbf{1}_{\{\theta \leq \lambda_u \}} N(du,d\theta,dx).$$
Note that we can write for $s\geq t$,
\begin{align}
\label{eq:templambda}
\lambda_s &= \mu + \int_{(0,t)\times \real_+\times \real } \phi(s-u) \textbf{1}_{\{\theta \leq \lambda_u \}} N(du,d\theta,dx) \nonumber\\
& \quad + \int_{(t,s)\times \real_+\times \real } \phi(s-u) \textbf{1}_{\{\theta \leq \lambda_u \}} N(du,d\theta,dx),
\end{align}
Similarly, for $s\geq t \geq 0$ we have 
\begin{align}
\label{eq:templambdashift}
\lambda_s \circ \epsilon_{(t,\eta ,x)}^+ &= \mu + \int_{(0,t)\times \real_+\times \real } \phi(s-u) \textbf{1}_{\{\theta \leq \lambda_u \}} N(du,d\theta,dx) \nonumber\\
& \quad + \phi(t-s) + \int_{(t,s)\times \real_+\times \real } \phi(s-u) \textbf{1}_{\{\theta \leq \lambda_u\circ \epsilon_{(t,\eta ,x)}^+ \}} N(du,d\theta,dx),
\end{align}
and $\lambda_s \circ \epsilon_{(t,\eta ,x)}^+ = \lambda_s $ for all $0 \leq s\leq t$,
$\eta \in \real_+$ and $x\in \real$. In addition, $\lambda_{t^+} \circ \epsilon_{(t,\eta ,x)}^+ = \lambda_{t^+} + \phi(0) \geq \lambda_{t^+}$. 
Note that $\lambda_s \circ \epsilon_{(t,\eta ,x)}^+$ does not depend on $x$ in Equation~\eqref{eq:templambdashift}. Let now 
$$\tau_1:=\inf\big\{s >t \ : \ \Delta_s H \circ \epsilon_{(t,\eta ,x)}^+ \neq 0\big\} \wedge \inf\big\{s >t\ : \ \Delta_s H \neq 0\big\},
$$ 
where we use the notation $\Delta_s H:=H_{s^+}-H_s$. So $\tau_1$ is the first jump of the Hawkes process $H$ or of the shifted Hawkes process $H \circ \epsilon_{(t,\eta ,x)}^+$ after $t$. Hence, from \eqref{eq:templambda}-\eqref{eq:templambdashift} we have 
$$ \lambda_s \circ \epsilon_{(t,\eta ,x)}^+ \geq \lambda_s, \quad s \in [0,\tau_1).$$
  Thus, at time $\tau_1$ (note that since $N$ is a Poisson point process we have $\tau_1<+\infty$, $\P$-a.s.), $N$ jumps at an atom $(\tau_1,\theta_1,x_1)$ which, by the previous ordering between $\lambda \circ \epsilon_{(t,\eta ,x)}^+$ and $\lambda$, imposes that $\tau_1$ is either a common jump time of $H \circ \epsilon_{(t,\eta ,x)}^+$ and $H$, or a jump time of $H \circ \epsilon_{(t,\eta ,x)}^+$ only,
 but it cannot be a jump time for $H$ and not for $H \circ \epsilon_{(t,\eta ,x)}^+$. In both situations (common jump or only a jump for the shifted Hawkes process) we have 
$$ \lambda_s \circ \epsilon_{(t,\eta ,x)}^+ \geq \lambda_s, \quad s \in [0,\tau_2),$$
where 
$$\tau_2:=\inf\big\{s >\tau_1 \ : \ \Delta_s H \circ \epsilon_{(t,\eta ,x)}^+ \neq 0\big\} \wedge \inf\big\{s >t \ : \ \Delta_s H \neq 0\big\}
$$ 
 is the next possible candidate jump time of $H$ and $H \circ \epsilon_{(t,\eta ,x)}^+$. Defining
 $$\tau_{n+1}:=\inf\big\{s >\tau_n \ : \ \Delta_s H \circ \epsilon_{(t,\eta ,x)}^+ \neq 0\big\} \wedge \inf\big\{s >t \ : \ \Delta_s H \neq 0\big\},
 \quad
 n \geq 0,
 $$ 
 and letting $n$ go to $+\infty$ concludes the proof.
\end{proof}

\noindent
 In what follows, for ease of notation we recall the notation
$\E_t[\ \! \cdot \ \! ] := \E[ \ \! \cdot \ \! \vert \mathcal F_t^N]$, $t\in \real_+$, and
 recall that by \eqref{eq:templambdashift} we have $\lambda \circ \varepsilon_{(t,0,x)}^+ =  \lambda \circ \varepsilon_{(t,0,1)}^+$ for any $(t,x)\in \real_+\times \real$.

\begin{lemma}
For fixed any $0 \leq t < s \leq T$, let 
$\widehat{\lambda}_s^t:= \lambda_s \circ \varepsilon_{(t,0,1)}^+ - \lambda_s$,
and
\begin{equation}
\label{eq:hatH}
\widehat{H}_s^t := D_{(t,x)} H_s -1 = \int_{(t,s] \times \real_+ \times \real} \textbf{1}_{\{\lambda_r < \theta \leq \lambda_r \circ \varepsilon_{(t,0,1)}^+\}} N(dr,d\theta,dx). 
\end{equation}
Then $\big(\widehat{H}^t,\widehat{\lambda}^t\big)$ is a generalized Hawkes process (with a baseline intensity $\phi(\cdot-t)$) in the sense that $\widehat{H}^t$ is a counting process starting at time $t$ with stochastic intensity process $\widehat{\lambda}^t$. In addition, $\widehat{\lambda}^t$ satisfies 
\begin{equation}
\label{eq:hatlambda}
\widehat{\lambda}_s^{t} =  \phi(s-t) + \displaystyle{\int_{(t,s)} \phi(s-u) d\widehat{H}_u^{t}}, \quad s>t, \ \!  \widehat{\lambda}_t^{t}=0, 
\end{equation}
 and the following relations hold true: 
\begin{itemize}
\item[(i)] for any $p>0$ 
\begin{equation}
\label{eq:inequalityhatlambda}
\textrm{\rm esssup}_{t\in [0,T]} \left(\E_t\left[\int_t^T \widehat{\lambda}_s^t ds\right]\right)^p \leq (\|\phi\|_1 (1+\|\psi\|_1))^p, \quad \P-a.s..
\end{equation}
\item[(ii)] 
$\displaystyle 
\int_t^T \big(\widehat{\lambda}_s^t -\E_t\big[\widehat{\lambda}_s^t\big]\big) ds = \int_t^T \psi(T-s) \widehat{\mathcal{M}}_s^t ds$, $s\in (t,T]$, where $\widehat{\mathcal{M}}^t_s=\widehat{H}^t_s-\int_t^s \widehat{\lambda}^t_r dr$.
\item[(iii)] For any $p>0$, there exists $C>0$ depending on $p$ only and such that 
\begin{equation}
\label{eq:inequality2hatlambda}
\textrm{\rm esssup}_{t\in [0,T] } \E_t\left[\left|\int_t^T \widehat{\lambda}_s^t ds\right|^{2}\right] \leq C \|\psi\|_1^2 < +\infty.
\end{equation}
\end{itemize}
\end{lemma}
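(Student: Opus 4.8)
The plan is to recognize the pair $\big(\widehat H^t,\widehat\lambda^t\big)$ as an inhomogeneous Hawkes process obtained by thinning $N$, and then to run the classical renewal/resolvent computation with the shifted baseline $\phi(\cdot-t)$ in the role of $\mu$. First I would subtract \eqref{eq:templambda} from \eqref{eq:templambdashift}: by Lemma~\ref{lemma:lemma2} one has $\lambda_u\circ\varepsilon_{(t,0,1)}^+\geq\lambda_u$, hence $\textbf{1}_{\{\theta\leq\lambda_u\circ\varepsilon_{(t,0,1)}^+\}}-\textbf{1}_{\{\theta\leq\lambda_u\}}=\textbf{1}_{\{\lambda_u<\theta\leq\lambda_u\circ\varepsilon_{(t,0,1)}^+\}}$, and one gets exactly $\widehat\lambda_s^t=\phi(s-t)+\int_{(t,s)}\phi(s-u)\,d\widehat H_u^t$ for $s>t$ with $\widehat\lambda^t_t=0$, i.e. \eqref{eq:hatlambda}, the last integral being taken against the jump measure of the process $\widehat H^t$ in \eqref{eq:hatH}. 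Next I would check that $\widehat H^t$ has $\mathbb F^N$-intensity $\widehat\lambda^t$: since $\lambda_\cdot$ and $\lambda_\cdot\circ\varepsilon_{(t,0,1)}^+$ are $\mathbb F^N$-predictable (see \eqref{eq:templambdashift}), the integrand $\textbf{1}_{\{\lambda_r<\theta\leq\lambda_r\circ\varepsilon_{(t,0,1)}^+\}}$ in \eqref{eq:hatH} is predictable, so the $\mathbb F^N$-compensator density of $\widehat H^t$ at time $r$ equals $\int_{\real_+\times\real}\textbf{1}_{\{\lambda_r<\theta\leq\lambda_r\circ\varepsilon_{(t,0,1)}^+\}}\,d\theta\,\nu(dx)=\lambda_r\circ\varepsilon_{(t,0,1)}^+-\lambda_r=\widehat\lambda_r^t$, using $\nu(\real)=1$. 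In particular $\widehat{\mathcal M}^t_s=\widehat H^t_s-\int_t^s\widehat\lambda^t_r\,dr$ is an $\mathbb F^N$-local martingale, square-integrable once $(i)$ is established, with $\widehat{\mathcal M}^t_t=0$; this is the announced generalized Hawkes structure.

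For part $(i)$, I would substitute $d\widehat H^t_u=d\widehat{\mathcal M}^t_u+\widehat\lambda^t_u\,du$ into \eqref{eq:hatlambda} and take $\E_t[\,\cdot\,]$: the martingale term vanishes (the integrand $\phi(s-\cdot)$ being deterministic) and, by Tonelli, $m^t(s):=\E_t[\widehat\lambda^t_s]$ solves the \emph{deterministic} renewal equation $m^t(s)=\phi(s-t)+\int_t^s\phi(s-u)m^t(u)\,du$ (all convolutions below being over $(t,s)$). As $\phi$ is bounded, this has a unique locally bounded solution, namely $m^t(s)=\phi(s-t)+\int_t^s\psi(s-u)\phi(u-t)\,du$ with $\psi$ as in \eqref{eq:Psi} (using $\psi=\phi+\phi*\psi$); in particular $\E_t[\widehat\lambda^t_s]$ is deterministic. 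Integrating over $s\in(t,T]$, applying Fubini and $\int_0^\infty\psi=\|\psi\|_1=\|\phi\|_1/(1-\|\phi\|_1)$ gives $\int_t^T m^t(s)\,ds\leq\|\phi\|_1(1+\|\psi\|_1)$; raising to the power $p>0$ (the quantity being nonnegative and deterministic) yields \eqref{eq:inequalityhatlambda}.

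For part $(ii)$, I would keep the martingale part, so that \eqref{eq:hatlambda} becomes the affine renewal equation $\widehat\lambda^t=\big(\phi(\cdot-t)+\phi*d\widehat{\mathcal M}^t\big)+\phi*\widehat\lambda^t$ on $(t,\infty)$; iterating (equivalently, convolving with the resolvent $\psi$) and using $\phi+\psi*\phi=\psi$ collapses its deterministic part to $m^t$ and its martingale part to $\int_t^s\psi(s-u)\,d\widehat{\mathcal M}^t_u$, so that $\widehat\lambda^t_s-\E_t[\widehat\lambda^t_s]=\int_t^s\psi(s-u)\,d\widehat{\mathcal M}^t_u$. Integrating this over $s\in(t,T]$ and exchanging the order of integration twice (using $\int_u^T\psi(r-u)\,dr=\int_u^T\psi(T-r)\,dr$ in between) produces $\int_t^T(\widehat\lambda^t_s-\E_t[\widehat\lambda^t_s])\,ds=\int_t^T\psi(T-s)\,\widehat{\mathcal M}^t_s\,ds$ after using $\widehat{\mathcal M}^t_t=0$; this is $(ii)$. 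The two Fubini exchanges are licit pathwise, since $\widehat{\mathcal M}^t$ has finite variation on $[t,T]$ and $\psi\in L^1$.

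For part $(iii)$ with $p=2$, I would combine $(i)$ and $(ii)$: $\int_t^T\widehat\lambda^t_s\,ds=\int_t^T m^t(s)\,ds+\int_t^T\psi(T-s)\widehat{\mathcal M}^t_s\,ds$ with the first term $\leq\|\psi\|_1$, and then use $(a+b)^2\leq 2a^2+2b^2$ together with the Cauchy--Schwarz inequality for the finite measure $\psi(T-s)\,ds$ on $[t,T]$ to get $\E_t[|\int_t^T\widehat\lambda^t_s\,ds|^2]\leq 2\|\psi\|_1^2+2\|\psi\|_1\int_t^T\psi(T-s)\,\E_t[(\widehat{\mathcal M}^t_s)^2]\,ds$. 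Since $\widehat{\mathcal M}^t$ is a square-integrable martingale (by $(i)$) with predictable bracket $\langle\widehat{\mathcal M}^t\rangle_s=\int_t^s\widehat\lambda^t_r\,dr$, we have $\E_t[(\widehat{\mathcal M}^t_s)^2]=\int_t^s m^t(r)\,dr\leq\|\psi\|_1$, so the right-hand side is $\leq 2\|\psi\|_1^2+2\|\psi\|_1^3$, which proves \eqref{eq:inequality2hatlambda} with $C:=2(1+\|\psi\|_1)$; for a general exponent $p>0$ one argues the same way, replacing Cauchy--Schwarz by Minkowski's integral inequality and estimating $\|\widehat{\mathcal M}^t_s\|_{L^p}$ via the Burkholder--Davis--Gundy inequality and $(i)$. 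I expect the main obstacle to be the first step: rigorously justifying that $\widehat\lambda^t$ is the $\mathbb F^N$-compensator density of $\widehat H^t$, which rests on the predictability of $\lambda_\cdot\circ\varepsilon_{(t,0,1)}^+$ and, implicitly, on the pathwise SDE-uniqueness argument behind Lemma~\ref{lemma:lemma2}; once this generalized Hawkes structure is in place, parts $(i)$--$(iii)$ reduce to routine manipulations of the renewal kernel $\psi$.
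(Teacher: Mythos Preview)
Your proposal is correct and follows essentially the same route as the paper: identify $(\widehat H^t,\widehat\lambda^t)$ as an inhomogeneous Hawkes process via thinning, then run the resolvent/renewal computation with $\psi$ to handle $(i)$--$(iii)$. Your derivation of \eqref{eq:hatlambda} by subtracting \eqref{eq:templambda} from \eqref{eq:templambdashift} using Lemma~\ref{lemma:lemma2} is exactly what the paper has in mind (it just cites \cite{HHKR} for this), and your predictability argument for the compensator density is precisely the content of the paper's observation that $\widehat{\mathcal M}^t$ is a martingale.

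The only place where the two presentations visibly diverge is in part $(ii)$. You first collapse $\widehat\lambda^t_s-\E_t[\widehat\lambda^t_s]$ directly to $\int_t^s\psi(s-u)\,d\widehat{\mathcal M}^t_u$ via the resolvent identity $\phi+\psi*\phi=\psi$, and then reach the claim by two Fubini swaps and the substitution $\int_u^T\psi(r-u)\,dr=\int_u^T\psi(T-r)\,dr$. The paper instead applies \cite[Lemma~3]{Bacry_et_al_2013} to obtain the representation $f_t=h_t+\psi*h_t$ with $h_t(s)=\int_t^s\phi(s-u)\,d\widehat{\mathcal M}^t_u$, and then spends several lines expanding the convolution $\psi*\phi=\psi-\phi$ explicitly before applying Fubini and an integration by parts at the end; the two computations are equivalent, yours is just more compact. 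For $(iii)$ you use the predictable bracket $\langle\widehat{\mathcal M}^t\rangle_s=\int_t^s\widehat\lambda^t_r\,dr$ and Cauchy--Schwarz against the measure $\psi(T-s)\,ds$, whereas the paper invokes BDG to bound $\E_t[(\widehat{\mathcal M}^t_s)^2]$ and then Cauchy--Schwarz on the product $\E_t[\widehat{\mathcal M}^t_s\widehat{\mathcal M}^t_u]$; again the same in spirit. One cosmetic point: your constant $C=2(1+\|\psi\|_1)$ depends on $\|\psi\|_1$, which does not literally match the ``depending on $p$ only'' phrasing of \eqref{eq:inequality2hatlambda}, but the paper's own constant has the same feature, so this is not a defect of your argument.
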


\begin{proof}
  Throughout this proof, $C>0$ denotes a positive constant that may change from line to line and is independent of $t$ and $T$. 
 We note that $\big(\widehat{H}^t,\widehat{\lambda}^t\big)$ is a generalized Hawkes process (with baseline intensity $0$) in the sense that $\widehat{H}^t$ is a counting process starting at time $t$ with stochastic intensity process $\lambda^t$. Indeed, the process $\widehat{\mathcal{M}}^t$ defined as 
\begin{align*}
  \widehat{\mathcal{M}}_{s}^t&:= \delta^N \big(
  \textbf{1}_{\{t < r \leq s\}} \textbf{1}_{\{\lambda_r < \theta \leq \lambda_r \circ \varepsilon_{(t,0,1)}^+\}} \big)
  \\
&=\int_{(t,s] \times \real_+ \times \real} \textbf{1}_{\{\lambda_r < \theta \leq \lambda_r \circ \varepsilon_{(t,0,1)}^+\}} \left(N(dr,d\theta,dx) -dr d\theta \nu(dx)\right)
  \\
&= \widehat{H}_s^t -\int_t^s \widehat{\lambda}_r^t dr, 
\end{align*}
is a $(\mathcal F_s^N)_{s\geq t}$-martingale by \eqref{eq:hatH}-\eqref{eq:hatlambda}.
 We refer to \cite[Proposition 2.19]{HHKR} for more details regarding the link between $\big(\widehat{H}^t,\widehat{\lambda}^t\big)$ and the Malliavin derivative of $(H,\lambda)$.
 \\
 \noindent $(i)$ 
A direct computation leads to (see \cite[Proof of Lemma 4.2]{HHKR}) 
\begin{equation}
\nonumber 
\E_t\left[\int_t^T \widehat{\lambda}_s^t ds\right]  = \int_t^T \phi(s-t) ds + \int_t^T \int_u^T \psi(T-u) ds \phi(u-t) du, 
\end{equation}
and in turns to (see \cite[Lemma 4.2]{HHKR})
$$\E_t\left[\int_t^T \widehat{\lambda}_s^t ds\right] \leq \|\phi\|_1 (1+\|\psi\|_1), \quad \P-a.s.,$$
which gives \eqref{eq:inequalityhatlambda}.
\\
$(ii)$ 
 This result is similar to the one for a Hawkes process with constant baseline intensity $\mu$. However, to make this paper self-contained we present the proof below. 
Recall first that from \cite[Lemma 3]{Bacry_et_al_2013}, given a locally bounded map $h:\real_+\to \real$, the unique solution to equation
$$f(s)=h(s)+\int_0^s \phi (s-u) f(u)du,$$
is given by
\begin{equation}
\label{eq:Lemma3Bacry}
f(s)=h(s)+\int_0^s \psi(s-u)h(u)du.
\end{equation}
As $\widehat{\mathcal{M}}_{s}^t= \widehat{H}_s^t -\int_t^s \widehat{\lambda}_r^t dr$, we have  
$$\widehat{\lambda}_s^t=\phi(s-t)+ \int_t^s \phi(s-u)d\widehat{\mathcal{M}}_u^t+\int_t^s \phi(s-u)\widehat{\lambda}_u^tdu.$$
Taking the conditional expectation $\E_t[\ \! \cdot \ \! ]$, we get 
$$\mathbb E_t \big[\widehat{\lambda}_s^t\big]=\phi(s-t) +\int_t^s \phi(s-u)\mathbb E_t\big[\widehat{\lambda}_u^t\big]du,
$$
 which leads to 
$$\widehat{\lambda}_s^t-\mathbb E_t \big[\widehat{\lambda}_s^t\big]=\int_t^s \phi(s-u)d\widehat{\mathcal{M}}_u^t +\int_t^s \phi(s-u)\big(\widehat{\lambda}_u^t-\mathbb E_t \big[\widehat{\lambda}_u^t\big]\big)du.
$$
This expression is true for any $s>t$, and can be extended to any $s>0$ as follows: 
$$ \textbf{1}_{\{s>t\}}\big(\widehat{\lambda}_s^t-\mathbb E_t\big[\widehat{\lambda}_s^t\big]\big)=\textbf{1}_{\{s>t\}}\int_t^s \phi(s-u)d\widehat{\mathcal{M}}_u^t +\textbf{1}_{\{s>t\}}\int_0^s \phi(s-u)\textbf{1}_{\{u>t\}}\big(\widehat{\lambda}_u^t-\mathbb E \big[\widehat{\lambda}_u^t\big]\big)du.
$$
Letting $f_t(s):=\textbf{1}_{\{s>t\}}\big(\widehat{\lambda}_s^t-\mathbb E_t \big[\widehat{\lambda}_s^t\big]\big)$
being defined for any $s>0$, and vanishing if $0\leq s\leq t$,
and $h_t(s):=\textbf{1}_{\{s>t\}}\int_t^s \phi(s-u)d\widehat{\mathcal{M}}_u^t$,
 the above previous relation rewrites as 
$$f_t(s)=h_t(s)+\textbf{1}_{\{s>t\}}\int_0^s \phi(s-u)f_t(u)du.$$
As $\{s<t\}$ implies $\{u<t\}$ and so $f_t(u)=0$, the indicator function can be removed and thus 
$$f_t(s)=h_t(s)+\int_0^s \phi(s-u)f_t(u)du.$$
Applying \eqref{eq:Lemma3Bacry} we thus get $f_t(s)=h_t(s)+\int_0^s\psi(s-u)h_t(u)du$,
 which means
\begin{align*}
    \textbf{1}_{\{s>t\}}\big(\widehat{\lambda}_s^t-\mathbb E_t \big[\widehat{\lambda}_s^t\big]\big) &= \textbf{1}_{\{s>t\}}\int_t^s \phi(s-u)d\widehat{\mathcal{M}}_u^t + \int_0^s\psi(s-u) \textbf{1}_{\{u>t\}}\int_t^u \phi(u-v)d\widehat{\mathcal{M}}_v^t du.
\end{align*}
Next, using Fubini's theorem,
the fact that the stochastic integrals are defined pathwise,
 and the definition \eqref{eq:Psi} of $\psi$, we have
\begin{align*}
  &   \int_t^T \big(\widehat{\lambda}_s^t-\mathbb E_t \big[\widehat{\lambda}_s^t\big]\big) ds = \int_t^T \int_t^s \phi(s-u)d\widehat{\mathcal{M}}_u^t ds \\
  & \qquad \qquad \qquad \qquad \quad \quad 
  + \int_t^T \int_0^s\psi(s-u) \textbf{1}_{\{u>t\}}\int_t^u \phi(u-v)d\widehat{\mathcal{M}}_v^t du ds
  \\
  & \quad =\int_t^T \int_t^s \phi(s-u)d\widehat{\mathcal{M}}_u^t ds + \int_t^T \int_t^s\int_t^s\psi(s-u) \textbf{1}_{\{u>v>t\}} \phi(u-v)d\widehat{\mathcal{M}}_v^t du ds
  \\
  & \quad =\int_t^T \int_t^s \phi(s-u)d\widehat{\mathcal{M}}_u^t ds + \int_t^T \int_t^s\int_v^s\psi(s-u)  \phi(u-v)dud\widehat{\mathcal{M}}_v^t ds
  \\
  & \quad =\int_t^T \int_t^s \phi(s-u)d\widehat{\mathcal{M}}_u^t ds + \int_t^T \int_t^s\left(\int_0^{s-v}\psi(s-v-z)  \phi(z)dz \right) d\widehat{\mathcal{M}}_v^t ds
  \\
  & \quad =\int_t^T \int_t^s \phi(s-u)d\widehat{\mathcal{M}}_u^t ds + \int_t^T \int_t^s\left(\psi * \phi \right)(s-v) d\widehat{\mathcal{M}}_v^t ds
  \\
  & \quad =\int_t^T \int_t^s \phi(s-u)d\widehat{\mathcal{M}}_u^t ds + \int_t^T \int_t^s\left(\sum_{n\geq 1} \phi^{*n}* \phi \right)(s-v) d\widehat{\mathcal{M}}_v^t ds
  \\
  & \quad =\int_t^T \int_t^s \phi(s-u)d\widehat{\mathcal{M}}_u^t ds + \int_t^T \int_t^s\left(\sum_{n\geq 2} \phi^{*n} \right)(s-v) d\widehat{\mathcal{M}}_v^t ds
  \\
  & \quad =\int_t^T \int_t^s \phi(s-u)d\widehat{\mathcal{M}}_u^t ds + \int_t^T \int_t^s\left(\psi(s-v)-\phi(s-v)\right) d\widehat{\mathcal{M}}_v^t ds
  \\
  & \quad =\int_t^T \int_t^s\psi(s-v)d\widehat{\mathcal{M}}_v^t ds
  \\
  &\quad =\int_t^T \int_v^T \psi(s-v)dsd\widehat{\mathcal{M}}_v^t
    \\
    & \quad =\int_t^T \psi(T-v) \widehat{\mathcal{M}}_v^t dv, 
\end{align*}
where we applied again Fubini's theorem, 
and integration by parts over $[t,T]$. 
\\
$(iii)$ By the Burkholder-Davis-Gundy inequality, we have 
$$\textrm{\rm esssup}_{\{ (t,s) \ : \ 0\leq t\leq s\leq T \}} \E_t\big[\big|\widehat{\mathcal{M}}_s^t\big|^2\big] \leq C \textrm{\rm esssup}_{t\in [0,T]} \E_t\big[\widehat{H}_T^t\big] = C \textrm{\rm esssup}_{t\in [0,T]} \E_t\left[\int_t^T \widehat{\lambda}_s^t ds\right]$$
leading to 
\begin{equation}
\nonumber 
\textrm{\rm esssup}_{\{ (t,s) \ : \ 0\leq t\leq s\leq T \}} \E_t\big[\big|\widehat{\mathcal{M}}_s^t\big|^2\big] <+\infty, \quad \P-a.s..
\end{equation}
This relation shows that $(iii)$ is a consequence of $(ii)$. Indeed, if $(ii)$ is satisfied, then 
Cauchy-Schwarz's inequality implies 
\begin{align*}
\E_t\left[\left(\int_t^T \big(\widehat{\lambda}_s^t -\E_t\big[\widehat{\lambda}_s^t\big]\big) ds\right)^2\right] &=\E_t\left[\left(\int_t^T \psi(T-s) \widehat{\mathcal{M}}_s^t ds\right)^2\right] \\
&= 2 \int_t^T \int_s^T \psi(T-s) \psi(T-u) \E_t\big[\widehat{\mathcal{M}}_s^t \widehat{\mathcal{M}}_u^t\big] du ds \\
&\leq C \|\psi\|_1^2.
\end{align*}
Combining this estimate with \eqref{eq:inequalityhatlambda} we get immediately that for any $p>0$, 
$$\textrm{\rm esssup}_{t\in [0,T]} \left( \E_t\left[\left|\int_t^T \widehat{\lambda}_s^t ds\right|^{2}\right] \right)^p \leq C <+\infty,$$
where we recall that $C>0$ does not depend on $t,T$,
which proves $(iii)$.
\end{proof}

\begin{lemma}
\label{lemma:contrhat2}
Letting 
$$
R_{t,T} := \int_{(t,T]\times \real_+\times \real} y \textbf{1}_{\{\lambda_r < \theta \leq \lambda_r \circ \varepsilon_{(t,0,1)}^+\}} \left(N(dr,d\theta,dy)-dr d\theta \nu(dy)\right),
  \quad
  0 \leq t\leq T,
$$
 we have  
$$ \textrm{\rm esssup}_{t\in [0,T] } \E_t\left[|R_{t,T}|^3\right] < \infty, \quad \P-a.s..$$
\end{lemma}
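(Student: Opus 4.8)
Here is how I would approach the proof of Lemma~\ref{lemma:contrhat2}.

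The plan is to read $R_{t,T}$ as the terminal value of a pure-jump $(\mathcal F^N_s)_{s\ge t}$-martingale whose jumps are the i.i.d.\ marks carried by the atoms of $N$ that contribute to the generalized Hawkes process $\widehat{H}^t$ of the preceding lemma, and then to reduce its third conditional moment to conditional moments of $\widehat{H}^t_T$ and $\int_t^T\widehat{\lambda}^t_s\,ds$, which have already been bounded uniformly in $t,T$. Throughout one uses that the mark coordinate does not appear in the SDE~\eqref{eq:H} for $(H,\lambda)$, hence neither in $\big(\widehat{H}^t,\widehat{\lambda}^t\big)$, so that $\widehat{H}^t$ and the labels of its jump times depend only on the mark-free part of $N$, which is independent of the marks; and one needs $\E[|X_1|^3]<\infty$ (tacitly in force here, since the statement fails otherwise).

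First I would fix $t$ and observe that $s\mapsto R_{t,s}$, $s\ge t$, is an $(\mathcal F^N_s)_{s\ge t}$-martingale with $R_{t,t}=0$ and optional quadratic variation
$$[R_{t,\cdot}]_T=\sum_{t<r\le T}y_r^2\,\textbf{1}_{\{\lambda_r<\theta_r\le\lambda_r\circ\varepsilon_{(t,0,1)}^+\}}=\sum_{i=1}^{\widehat{H}^t_T}X_{j_i}^2,$$
where $X_{j_1},X_{j_2},\dots$ are the marks of $N$ attached to the atoms producing a jump of $\widehat{H}^t$ on $(t,T]$ (only finitely many $\P$-a.s., as $\lambda$ and $\lambda\circ\varepsilon_{(t,0,1)}^+$ are a.s.\ locally bounded). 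The Burkholder--Davis--Gundy inequality, applied on $[t,T]$ with the conditional expectation $\E_t$ and the universal exponent-$3$ constant, yields a constant $C>0$ independent of $t,T$ such that
$$\E_t\big[|R_{t,T}|^3\big]\le C\,\E_t\big[[R_{t,\cdot}]_T^{3/2}\big]=C\,\E_t\Big[\Big(\sum_{i=1}^{\widehat{H}^t_T}X_{j_i}^2\Big)^{3/2}\Big],$$
and it remains to bound the right-hand side uniformly in $t,T$.

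Then I would apply the convexity bound $\big(\sum_{i=1}^na_i\big)^{3/2}\le n^{1/2}\sum_{i=1}^na_i^{3/2}$ with $a_i=X_{j_i}^2$, giving $\big(\sum_{i=1}^{\widehat{H}^t_T}X_{j_i}^2\big)^{3/2}\le\big(\widehat{H}^t_T\big)^{1/2}\sum_{i=1}^{\widehat{H}^t_T}|X_{j_i}|^3$; conditioning inside $\E_t$ on the mark-free configuration of $N$ (which fixes $\widehat{H}^t_T$ and the index set $\{j_i\}$ while the attached marks stay i.i.d.\ $\nu$) then gives
$$\E_t\Big[\big(\widehat{H}^t_T\big)^{1/2}\sum_{i=1}^{\widehat{H}^t_T}|X_{j_i}|^3\Big]=\E\big[|X_1|^3\big]\,\E_t\big[\big(\widehat{H}^t_T\big)^{3/2}\big].$$
Using $x^{3/2}\le 1+x^2$ for $x\ge0$, the decomposition $\widehat{H}^t_T=\widehat{\mathcal{M}}^t_T+\int_t^T\widehat{\lambda}^t_s\,ds$, the identity $\E_t[(\widehat{\mathcal{M}}^t_T)^2]=\E_t[\langle\widehat{\mathcal{M}}^t\rangle_T]=\E_t[\int_t^T\widehat{\lambda}^t_s\,ds]\le\|\phi\|_1(1+\|\psi\|_1)$ (part $(i)$ of the preceding lemma), and $\esssup_{t\in[0,T]}\E_t[(\int_t^T\widehat{\lambda}^t_s\,ds)^2]\le C$ (part $(iii)$), one obtains $\esssup_{t\in[0,T]}\E_t[(\widehat{H}^t_T)^{3/2}]\le C$, whence $\esssup_{t\in[0,T]}\E_t[|R_{t,T}|^3]<\infty$, $\P$-a.s.

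The step I expect to require the most care is the conditioning argument producing the scalar factor $\E[|X_1|^3]$: one must verify that $\widehat{H}^t_T$ and the index set $\{j_i\}$ are measurable functions of the mark-free part of $N$ on $(t,T]\times\real_+$ only (here the fact already noted in the text, that $\lambda\circ\varepsilon_{(t,0,x)}^+$ does not depend on $x$, is convenient), and that $\widehat{H}^t_T<\infty$ $\P$-a.s. A cleaner but less self-contained alternative is to invoke a Kunita/Bichteler--Jacod $L^3$-inequality for compensated Poisson integrals directly, bounding $\E_t[|R_{t,T}|^3]$ by $C\big(\E[X_1^2]^{3/2}\,\E_t[(\int_t^T\widehat{\lambda}^t_s\,ds)^{3/2}]+\E[|X_1|^3]\,\E_t[\int_t^T\widehat{\lambda}^t_s\,ds]\big)$ and closing with the same two estimates from the preceding lemma.
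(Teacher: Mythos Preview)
Your proof is correct but follows a genuinely different route from the paper's. Both arguments open with the Burkholder--Davis--Gundy inequality to reduce to $\E_t\big[[R_{t,\cdot}]_T^{3/2}\big]$. From there the paper applies Jensen's inequality to pass from the $3/2$-power to a square, decomposes the $N$-integral of $y^2$ into a compensated part and a compensator part, and closes with the It\^o isometry and the bounds \eqref{eq:inequalityhatlambda}--\eqref{eq:inequality2hatlambda}; this route uses $\int y^4\,\nu(dy)<\infty$. You instead exploit the product structure explicitly: writing $[R_{t,\cdot}]_T$ as a random sum $\sum_{i\le \widehat H^t_T} X_{j_i}^2$, applying the H\"older/convexity bound $(\sum a_i)^{3/2}\le n^{1/2}\sum a_i^{3/2}$, and then conditioning on the mark-free configuration of $N$ to factor out $\E[|X_1|^3]$, so that only a third-moment assumption on the marks is needed. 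Your approach is therefore sharper in moment requirements and makes the role of the i.i.d.\ marks more transparent; the paper's approach is more ``off-the-shelf'' stochastic calculus, avoids the conditioning step you flag as delicate, and is consistent with the standing assumption $\E[X_1^4]<\infty$ of Theorem~\ref{th:fast}, which is the only place the lemma is invoked. Your closing alternative via a Kunita/Bichteler--Jacod $L^3$-inequality is in fact close in spirit to what the paper actually does.
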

\begin{proof}
  Throughout this proof, $C>0$ denotes a positive constant that may change from line to line, and is independent of $t$ and $T$ considered below. 
 By the Burkholder-Davis-Gundy inequality, and the Jensen inequality we have 
\begin{align*}
  & \E_t\left[\left|R_{t,T}\right|^3\right] \leq C \E_t\left[\left| \int_{(t,T]\times \real_+\times \real} y^2 \textbf{1}_{\{\lambda_r < \theta \leq \lambda_r \circ \varepsilon_{(t,0,1)}^+\}} N(dr,d\theta,dy) \right|^{3/2}\right]
    \\
& \leq C \left(\E_t\left[\left| \int_{(t,T]\times \real_+\times \real} y^2 \textbf{1}_{\{\lambda_r < \theta \leq \lambda_r \circ \varepsilon_{(t,0,1)}^+\}} N(dr,d\theta,dy) \right|^{2}\right]\right)^{3/4}\\
& \leq C \left( \E_t\left[\left| \int_{(t,T]\times \real_+\times \real} y^2 \textbf{1}_{\{\lambda_r < \theta \leq \lambda_r \circ \varepsilon_{(t,0,1)}^+\}} (N(dr,d\theta,dy)-dr d\theta \nu(dy)) \right|^{2}\right]\right)^{3/4} \\
        & \quad + C \left(\int_{-\infty}^\infty y^2 \nu(dy)\right)^{3/2} \left( \E_t\left[\left| \int_t^T  \widehat \lambda_r^t dr \right|^{2}\right]\right)^{3/4}
        \\
&= C \left(\int_{-\infty}^\infty y^4 \nu(dy)\right)^{3/4} \left(\E_t\left[\int_t^T   \widehat \lambda_r^t dr\right]\right)^{3/4} + C \left(\int_{-\infty}^\infty y^2 \nu(dy)\right)^{3/2} \left(\E_t\left[\left| \int_t^T \widehat\lambda_r^t dr \right|^{2}\right] \right)^{3/4}, 
\end{align*}
 and the conclusion follows from \eqref{eq:inequalityhatlambda}-\eqref{eq:inequality2hatlambda}. 
\end{proof}

\begin{lemma}
\label{lemma:Rfrak}
For $T>0$, set
$$\mathfrak R_T := \frac{H_T-\int_0^T\E[\lambda_s]ds}{\sqrt T}-\frac{\mathcal M_T}{\sqrt T (1-\|\phi\|_1)}, \quad \mbox{and}
 \quad \mathcal M_T := H_T- \int_0^T \lambda_u du.$$
Under the assumptions of Theorem \ref{th:alternativ}, there exists a constant $C>0$ depending on $\mu,\|\phi\|_1$ such that
$$\E [\mathfrak R_T^2] \leq \frac{C}{T}.$$
\end{lemma}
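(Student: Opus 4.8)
The plan is to first rewrite $\sqrt{T}\,\mathfrak{R}_T$ as a functional of the compensated Hawkes martingale $\mathcal{M}$ alone, and then to bound its $L^2$-norm. Writing $dH_s=d\mathcal{M}_s+\lambda_s\,ds$ in \eqref{eq:lambda} and taking expectations gives $\E[\lambda_t]=\mu+\int_0^t\phi(t-s)\E[\lambda_s]\,ds$, so that after subtraction the centered intensity $g(t):=\lambda_t-\E[\lambda_t]$ solves $g(t)=h(t)+\int_0^t\phi(t-s)g(s)\,ds$ with $h(t):=\int_0^t\phi(t-s)\,d\mathcal{M}_s$. This is exactly the setting of item~(ii) of the Lemma preceding Lemma~\ref{lemma:contrhat2} (the ``constant baseline intensity $\mu$'' variant alluded to there, with $t=0$, the baseline $\phi(\,\cdot\,-t)$ replaced by $\mu$, and $\widehat{\mathcal{M}}^t$ replaced by $\mathcal{M}$); running the same resolvent identity \eqref{eq:Lemma3Bacry}, the relation $\psi*\phi=\psi-\phi$ (from \eqref{eq:Psi}) and the same Fubini interchanges, I would obtain $\int_0^T(\lambda_u-\E[\lambda_u])\,du=\int_0^T\psi(T-s)\,\mathcal{M}_s\,ds$. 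Combining this with $H_T-\int_0^T\E[\lambda_s]\,ds=\mathcal{M}_T+\int_0^T(\lambda_u-\E[\lambda_u])\,du$, with $1/(1-\|\phi\|_1)=1+\|\psi\|_1$ (the value of $\int_0^\infty\psi$ computed right after \eqref{eq:Psi}), and with $\|\psi\|_1-\int_0^T\psi(T-s)\,ds=\int_T^\infty\psi$, yields the key identity
$$\sqrt{T}\,\mathfrak{R}_T=\int_0^T\psi(T-s)\bigl(\mathcal{M}_s-\mathcal{M}_T\bigr)\,ds-\Bigl(\int_T^\infty\psi(u)\,du\Bigr)\mathcal{M}_T.$$

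Next, I would record that Assumption~\ref{assumptionPhi} forces a finite first moment for $\psi$: since $\int_0^\infty u\,\phi^{(*n)}(u)\,du=n\,\|\phi\|_1^{n-1}\int_0^\infty u\phi(u)\,du$ and $\sum_{n\ge1}n\|\phi\|_1^{n-1}=(1-\|\phi\|_1)^{-2}$, summing over $n$ gives $\int_0^\infty u\,\psi(u)\,du=(1-\|\phi\|_1)^{-2}\int_0^\infty u\phi(u)\,du<+\infty$, and in particular $\int_T^\infty\psi(u)\,du\le T^{-1}\int_0^\infty u\,\psi(u)\,du$ for every $T>0$ by Markov's inequality.

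Finally, I would estimate the $L^2$-norm of the right-hand side of the identity in Step~1. Using $\E[\lambda_u]=\mu\bigl(1+\int_0^u\psi\bigr)\le\mu/(1-\|\phi\|_1)$ for all $u\ge 0$, the martingale property of $\mathcal{M}$ gives $\E[\mathcal{M}_T^2]=\int_0^T\E[\lambda_s]\,ds\le\mu T/(1-\|\phi\|_1)$ and, by orthogonality of increments, $\E[(\mathcal{M}_T-\mathcal{M}_s)^2]=\int_s^T\E[\lambda_u]\,du\le\mu(T-s)/(1-\|\phi\|_1)$. Applying $(a+b)^2\le2a^2+2b^2$, then Jensen's inequality with respect to the finite measure $\psi(T-s)\,ds$ on $[0,T]$ (of total mass $\le\|\psi\|_1$), and then the two bounds above, I get
$$\E[\mathfrak{R}_T^2]\le\frac{2\|\psi\|_1}{T}\int_0^T\psi(T-s)\,\E\bigl[(\mathcal{M}_T-\mathcal{M}_s)^2\bigr]\,ds+\frac{2}{T}\Bigl(\int_T^\infty\psi\Bigr)^2\E[\mathcal{M}_T^2]\le\frac{C}{T}\int_0^T u\,\psi(u)\,du+C\Bigl(\int_T^\infty\psi\Bigr)^2,$$
with $C>0$ depending only on $\mu$ and $\phi$. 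By Step~2 the first term is $\le C/T$ and the second $\le C/T^2$ (and trivially $\le C\|\psi\|_1^2$), so in all cases $\E[\mathfrak{R}_T^2]\le C/T$, as claimed.

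The only delicate point, and the hard part, is justifying the stochastic Fubini interchanges used to derive the identity in Step~1. These are handled exactly as in the proof of item~(ii) of the Lemma preceding Lemma~\ref{lemma:contrhat2}: $\mathcal{M}$ has finite variation on compact sets, so every integral $\int_0^t\,\cdot\;d\mathcal{M}_v$ is a pathwise Lebesgue--Stieltjes integral and Fubini applies pathwise; everything else is a routine computation.
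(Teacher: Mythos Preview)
Your proof is correct and follows essentially the same route as the paper: derive the identity $\sqrt{T}\,\mathfrak R_T=\int_0^T\psi(T-s)(\mathcal M_s-\mathcal M_T)\,ds-\bigl(\int_T^\infty\psi\bigr)\mathcal M_T$, split via $(a+b)^2\le 2a^2+2b^2$, and control the two pieces using $\E[\mathcal M_T^2]=O(T)$ together with $\int_0^\infty u\psi(u)\,du<\infty$. The only cosmetic difference is that for the first piece the paper expands the square directly and appeals to $\E[H_T-H_{T-r}]=O(r)$ via \cite[Lemma~4]{Bacry_et_al_2013}, whereas your Jensen/Cauchy--Schwarz step with $\E[(\mathcal M_T-\mathcal M_s)^2]\le\mu(T-s)/(1-\|\phi\|_1)$ is a touch more direct.
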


\begin{proof}
Using \cite[Lemma 4]{Bacry_et_al_2013} we have that
\begin{align*}
    Y_T&:=\frac{1}{\sqrt T}\left(H_T-\E[H_T] \right)\\
    &=\frac{1}{\sqrt T}\left(H_T-\int_0^T \E[\lambda_t] dt \right)\\
    &=\frac{1}{\sqrt T}\left(\mathcal M_T+\int_0^T \psi(T-s)\mathcal M_s ds \right),
\end{align*}
hence
\begin{align*}
  Y_T-\frac{\mathcal M_T}{\sqrt T (1-\|\phi\|_1)}&=\frac{\mathcal M_T}{\sqrt T}\left( 1-\frac{1}{1-\|\phi\|_1}\right)
  + \frac{1}{\sqrt T} \int_0^T \psi(T-s) \mathcal M_s ds\\
    &=
    - \frac{1}{\sqrt T} \int_0^{+\infty} \psi(s) \mathcal M_T ds +
    \frac{1}{\sqrt T}
    \int_0^T \psi(T-s)
    \mathcal M_s ds,
\end{align*}
because $\int_0^{+\infty}\psi(s) ds =\|\phi\|_1/(1-\|\phi\|_1)$. Thus, if we set 
$$\mathfrak R_T = \frac{1}{\sqrt T} \left( \int_0^T \psi(s)(\mathcal M_{T-s}-\mathcal M_T)ds -\mathcal M_T\int_T^{+\infty} \psi(s)ds\right).$$
We have that
\begin{align*}
    \E[\mathfrak R_T^2]&\leq 2 \left(\frac{1}{T}\E[\mathcal M^2_T]\left( \int_T^{+\infty} \psi(s)ds\right)^2 + \frac{1}{T}\E \left[ \left( \int_0^{T}\psi(s)(\mathcal M_T-\mathcal M_{T-s})ds\right)^2\right] \right)\\
    &= 2 (A_1+A_2).
\end{align*}
Using the fact that the expected value of the square of a martingale is the expected value of its quadratic variation which in this case is the process jumps we have,
 using Assumption~\ref{assumptionPhi}, that 
\begin{align*}
    A_1 &= \frac{1}{T}\E\left[[\mathcal M]_T\right]\left( \int_T^{+\infty} \psi(s)ds\right)^2\\
    &=\frac{1}{T}\E [H_T ]\left( \int_T^{+\infty} \psi(s)ds\right)^2\\
    &=O\left( 1\right) \left( \int_T^{+\infty} \psi(s)ds\right)^2
    \\
    &\leq O\left( \frac{1}{T^2}\right) \left(\int_0^{+\infty} s\psi(s)ds\right)^2\\
    &=O\left(\frac{1}{T^2}\right)
\end{align*}
as in the proof of \cite[Lemma 5]{Bacry_et_al_2013}.
By expanding the square, the second term yields
\begin{align*}
    A_2&= \frac{1}{T}\E \left[ \left( \int_0^{T}\psi(s)(\mathcal M_T-\mathcal M_{T-s})ds\right)^2\right]\\
    &= \frac{2}{T}\E \left[  \int_0^{T}\int_0^s \psi(s)(\mathcal M_T-\mathcal M_{T-s})\psi(r)(\mathcal M_T-\mathcal M_{T-r}) dr ds \right]\\
    &= \frac{2}{T}  \int_0^{T}\int_0^s \psi(s)\psi(r)\E \left[(\mathcal M_T^2-\mathcal M_T\mathcal M_{T-s}-\mathcal M_T\mathcal M_{T-r}+\mathcal M_{T-r}\mathcal M_{T-s})\right] dr ds. \\
\end{align*}
Since for any $a\leq b$, $\E [\mathcal M_a \mathcal M_b]=\E [\mathcal M_a^2]=\E[H_a]$, we have that 
\begin{align*}
    A_2&=\frac{2}{T}  \int_0^{T}\int_0^s \psi(s)\psi(r)(\E  [ H_T]-\E[H_{T-s}]-\E[H_{T-r}]+\E[H_{T-s}] )dr ds\\
    &=\frac{2}{T}  \int_0^{T}\int_0^s \psi(s)\psi(r) \E  [ H_T- H_{T-r}] dr ds.
\end{align*}
In order to bound the integral, we use once again \cite[Lemma 4]{Bacry_et_al_2013} to obtain
\begin{align*}
    \E  [ H_T-H_{T-r}]&=r\mu +\left( r\int_0^{T-r}\psi(u)du  + \int_{T-r}^T \psi(u)(T-u)du\right)\mu\\
    &=r\mu +\left( r\int_0^{T-r}\psi(u)du  + \int_{0}^r \psi(T-u)udu\right)\mu\\
    &\leq r\mu +\left( r\|\phi\|_1  + \int_{0}^r \psi(T-u)rdu\right)\mu = O(r),
\end{align*}
and since $\psi$ is positive 
\begin{align*}
    A_2&\leq \frac{C}{T} \int_0^{T}\int_0^s \psi(s)\psi(r)r dr ds\\
    &\leq \frac{C}{T} \|\psi\|_1\int_0^{T} r\psi(r) dr\\
    &=O\left( \frac{1}{T}\right)
\end{align*}
which yields the desired result.
\end{proof}

\subsection{Proof of Theorem~\ref{th:main1}}
 According to Stein's method, see Appendix~\ref{section:Stein}, the Wasserstein distance between $F$ and ${\cal N}_{\gamma^2}$ can be bounded by 
\begin{equation}
\nonumber 
d_W\big(F,{\cal N}_{\gamma^2}\big) \leq \sup_{f \in \mathcal{F}_W} \left|\E[\gamma^2 f'(F) - F f(F)]\right|,
\end{equation}
 see \eqref{eq:Stein}, where 
$$\mathcal F_W:=\big\{f:\real \to \real,\ \!  \textrm{ twice differentiable with } \|f'\|_\infty \leq 1, \ \!  \|f''\|_\infty \leq 2 \big\}.$$
In addition, the right hand side is equal to $0$ if and only if $F\sim \mathcal{N}(0,\gamma^2)$.
\\
\noindent 
 $(i)$ We follow the beginning the Nourdin-Peccati's methodology (see \textit{e.g.} \cite{Peccati_2010})and apply the integration by parts formula to $\E[F f(F)]$ for $f$ in $\mathcal F_W$. More precisely, according to \cite[Proof of Theorem~3.4]{HHKR}, and using the integration by parts formula for the Hawkes process, see \eqref{eq:IBP}, we have 
	\begin{align*}
\E\left[f(F) F\right] & = \E\left[f(F) \delta^N(z \mathcal{Z})\right]\\
&= \E\left[\int_{\real_+\times \real} z(t,x) \lambda_t D_{(t,x)} f(F) dt \nu(dx)\right] \\
&= \E\left[\int_{\real_+\times} z(t,x) \lambda_t \left(f(F \circ \eps_{(t,0,x)}^+)-f(F)\right) dt\nu(dx)\right].
\end{align*}
 By Taylor expansion we have 
 $$ f(F \circ \eps_{(t,0,x)}^+)-f(F) = f'(F) D_{(t,x)} F + \frac12 f''\big(\widebar{F}^{t,x}\big) | D_{(t,x)} F |^2,
$$
 where $\widebar{F}^{t,x}$ denotes a random element between $F \circ \eps_{(t,0,x)}^+$ and $F$. 
 Hence we have 
\begin{align*}
\E\left[\gamma^2 f'(F) - f(F) F\right] 
  &= \E\left[f'(F) \left(\gamma^2 - \int_{\real_+\times \real} z(t,x) \lambda_t D_{(t,x)} F dt \nu(dx)\right) \right]
  \\
   & \quad - \frac{1}{2} \E\left[\int_{\real_+\times \real} z(t,x) \lambda_t f''\big(\widebar{F}^{t,x}\big) | D_{(t,x)} F |^2 dt \nu(dx)\right].
\end{align*}
At this stage, we provide a different treatment of the first time by expanding $D_{(t,x)} F$ according to \eqref{eq:Ddelta}. Thus
\begin{align*}
&\E\left[f'(F) \left(\gamma^2 - \int_{\real_+\times \real} z(t,x) \lambda_t D_{(t,x)} F dt \nu(dx)\right) \right] \\
&= \E\left[f'(F) \left(\gamma^2 - \int_{\real_+\times \real} |z(t,x)|^2 \lambda_t dt \nu(dx)\right) \right] - \E\left[f'(F) \int_{\real_+\times \real} z(t,x) \lambda_t \delta^N\big(z \widehat{\mathcal{Z}}^t\big) dt \nu(dx) \right],
\end{align*}
so that 
\begin{align}
\label{eq:temp1}
\E\left[\gamma^2 f'(F) - f(F) F\right] 
 & = \E\left[f'(F) \left(\gamma^2 - \int_{\real_+\times \real} |z(t,x)|^2 \lambda_t dt \nu(dx)\right) \right]
\\
\nonumber
& \quad - \frac{1}{2} \E\left[\int_{\real_+\times \real} z(t,x) \lambda_t f''\big(\widebar{F}^{t,x}\big) | D_{(t,x)} F |^2 dt \nu(dx)\right]
\\
\nonumber 
& \quad -  \int_{\real_+\times \real} z(t,x) \E\big[\lambda_t \E_t\big[f'(F)\delta^N\big(z \widehat{\mathcal{Z}}^t\big)\big] \big] dt \nu(dx).
\end{align}
We now compute the last term in \eqref{eq:temp1}. By Lemma \ref{lemma:lemma1}, we get 
$$ \E_t\big[f'(F)\delta^N\big(z \widehat{\mathcal{Z}}^t\big)\big]=0,
$$
and the estimate \eqref{eq:temp1} reads  
\begin{align}
\label{eq:temp3}
\E\left[\gamma^2 f'(F) - f(F) F\right] = & \E\left[f'(F) \left(\gamma^2 - \int_{\real_+\times \real} |z(t,x)|^2 \lambda_t dt \nu(dx)\right) \right]
\nonumber\\
&  - \frac{1}{2} \E\left[\int_{\real_+\times \real} z(t,x) \lambda_t f''\big(\widebar{F}^{t,x}\big) | D_{(t,x)} F |^2 dt \nu(dx)\right], 
\end{align}
which in turn implies 
\begin{align*}
  &\big|\E\big[\gamma^2 f'(F) - f(F) F\big] \big| \leq \left|\E\left[f'(F) \left(\gamma^2 - \int_{\real_+\times \real} |z(t,x)|^2 \lambda_t dt \nu(dx)\right) \right]\right|
  \\
   & \quad + \frac{1}{2} \left|\E\left[\int_{\real_+\times \real} z(t,x) \lambda_t f''\big(\widebar{F}^{t,x}\big) | D_{(t,x)} F |^2 dt \nu(dx)\right] \right|\\
  &\leq \|f'\|_\infty \E\left[\left|\gamma^2 - \int_{\real_+\times \real} |z(t,x)|^2 \lambda_t dt \nu(dx)\right| \right] + \frac{\|f''\|_\infty}{2} \E\left[\int_{\real_+\times \real}
        |z(t,x)|
     | D_{(t,x)} F |^2 \lambda_t dt \nu(dx)\right], 
\end{align*}
which leads to \eqref{eq:rela1} as $f$ belongs to $\mathcal F_W$.
\\
\noindent
$(ii)$ 
Using once again the integration by parts formula \eqref{eq:IBP} for the Hawkes process and the fact that (by an elementary algebraic computation) that
 $D (F^2) = |DF|^2 + 2 F DF$, we have 
\begin{align}
\label{eq:decompmoment3}
& \E\big[F^3\big] = \E\big[\delta^N(z \mathcal{Z}) F^2\big] \nonumber \\
&=\int_{\real_+\times \real} \E\left[z(t,x) \lambda_t D_{(t,x)} (F^2)\right] dt \nu(dx)\nonumber\\
&=\int_{\real_+\times \real} z(t,x) \E\left[ |D_{(t,x)} F|^2 \lambda_t \right] dt \nu(dx) + 2 \int_{\real_+\times \real} z(t,x) \E\left[\lambda_t \ \!  F D_{(t,x)} F\right] dt \nu(dx)\nonumber\\
&=:T_1 + 2 T_2.
\end{align}
Note that $T_1$ is exactly the second term in the right-hand side of \eqref{eq:rela1}, and thus the result follows if we show that $T_2 \geq 0$. To this end we compute this term. Using Relation~\eqref{eq:Ddelta} and integration by parts, we can expand this term as follows: 
\begin{align*}
T_2& =\int_{\real_+\times \real} z(t,x) \E\left[\lambda_t \ \!  F D_{(t,x)} F\right] dt \nu(dx)\\
&=\int_{\real_+\times \real} |z(t,x)|^2 \E\left[\lambda_t \ \!  F \right] dt \nu(dx)+\int_{\real_+\times \real} z(t,x)\E\big[ \lambda_t \ \!  F \delta^N \big(z \widehat{\mathcal{Z}}^t\big) \big] dt \nu(dx)\\
&=\int_{\real_+\times \real} |z(t,x)|^2 \E\left[ \lambda_t \ \! F \right] dt \nu(dx)+\int_{\real_+\times \real} z(t,x) \E\big[ \lambda_t \ \!  \E_t\big[F \delta^N \big(z \widehat{\mathcal{Z}}^t\big)\big] \big] dt \nu(dx)\\
&=\int_{\real_+\times \real} |z(t,x)|^2\E [ \lambda_t \ \!  F ] dt \nu(dx),
\end{align*}
where we applied Lemma \ref{lemma:lemma1}. By definition of $F$, we get that 
\begin{align*}
T_2&=\int_{\real_+\times \real} |z(t,x)|^2 \E [ \lambda_t \ \!  F ] dt \nu(dx) \\
&=\int_{\real_+\times \real} |z(t,x)|^2 \E [ \lambda_t \ \!  \delta^N(z \mathcal{Z}) ] dt \nu(dx) \\
&=\int_{\real_+\times \real}  |z(t,x)|^2 \E \left[ \int_{[0,t)\times \real} z(s,y) \lambda_s D_{(s,y)}
      \lambda_t  ds \nu(dy)\right] dt \nu(dx) \\
&=\int_{\real_+\times \real} \int_{[0,t)\times \real} z(s,y) |z(t,x)|^2 \E\left[ \lambda_s D_{(s,y)} \lambda_t \right] ds \nu(dy) dt \nu(dx) \\
&\geq 0, \quad \P-a.s., 
\end{align*}
where for the last inequality we used Lemma \ref{lemma:lemma2}
and the Assumption~\eqref{eq:conditionZ} on $z(t,x)$. 
To summarize, we have shown that 
\begin{equation}
\label{eq:ineqmoment3}
\E\big[F^3\big] \geq \E\left[\int_{\real_+\times \real} |z(t,x)| 
  | D_{(t,x)} F |^2 \lambda_t dt \nu(dx) \right].
\end{equation}

\subsection{Proof of Theorem~\ref{th:main2}}

Let $T>0$, and note that we have $F_T=\delta^N(z \mathcal{Z})$ with $z(t,x):= \textbf{1}_{\{t\in [0,T] \}} x / \sqrt{T}$, $(t,x)\in \real_+\times \real$.
By Relation~\eqref{eq:rela1} in Theorem~\ref{th:main1} we have 
\begin{equation}
\label{eq:tempproofmain2}
d_W\big(F,{\cal N}_{\gamma^2}\big) \leq \E\left[\left|\gamma^2 - \frac{\vartheta^2}{T} \int_0^T \lambda_t dt \right| \right] + \frac{1}{\sqrt{T}} \E\left[ \int_\real \int_0^T |x| | D_{(t,x)} F_T |^2 \lambda_t dt \nu(dx) \right]. 
\end{equation}
On the other hand, by \cite[Estimates on Term $A_2$-Proof of Theorem~3.10]{HHKR} we have 
$$ \frac{1}{\sqrt{T}} \E\left[ \int_\real \int_0^T |x|
  | D_{(t,x)} F_T |^2 \lambda_t dt \nu(dx) \right] =O(T^{-1/2}),
$$
for any $\phi$ satisfying Assumption~\ref{assumptionPhi}.
Regarding the first term in the right hand-side of \eqref{eq:tempproofmain2}, we have 
\begin{align*}
\E\left[\left|\gamma^2 - \frac{\vartheta^2}{T} \int_0^T \lambda_t dt \right| \right] 
&\leq \left|\gamma^2 - \frac{\vartheta^2}{T} \int_0^T \E[\lambda_t] dt \right| + \frac{\vartheta^2}{T}  \E\left[\left|\int_0^T (\lambda_t-\E[\lambda_t]) dt \right| \right] \\
&=O(T^{-1/2}),
\end{align*}
since 
$$\left|\gamma^2 - \frac{\vartheta^2}{T} \int_0^T \E[\lambda_t] dt \right| =O(T^{-1}),
\quad \frac{\vartheta^2}{T}  \E\left[\left|\int_0^T (\lambda_t-\E[\lambda_t]) dt \right| \right] =O(T^{-1/2}),
$$
 by \cite[Lemma 4.1]{HHKR} and \cite[Estimate on Term $A_{1,2}$ - Proof of Theorem~3.10]{HHKR}.
 \hfill $\Box$
 \\
 \noindent
 As $z(t,x)$ is non-negative, instead of \eqref{eq:rela1}
  we could also have applied \eqref{eq:rela2} which involves the quantity $\E\big[F_T^3\big]$, with 
  $$ \E\big[F_T^3\big] \geq \frac{1}{\sqrt{T}} \E\left[\int_0^T \int_\real |x| | D_{(t,x)} F_T |^2 \lambda_t dt \nu(dx) \right],
  $$
  by \eqref{eq:ineqmoment3},
  which recovers the convergence with decay rate $T^{-1/2}$ using the decomposition~\eqref{eq:decompmoment3}, in which the first term coincides with the bound in \eqref{eq:rela1} and the second term is proved to decay as $T^{-1/2}$ using \cite[Lemma 4.2]{HHKR}.

\subsection{Proof of Theorem~\ref{th:fast}}Let $T>0$. As $m=\int_{-\infty}^\infty x \nu(dx) = \E[X_1] =0$, $S_T = \delta^N(z \mathcal{Z})$ with $z(t,x)= x \textbf{1}_{\{t \in [0,T] \}} / \sqrt{T}$, 
 using \eqref{eq:Stein} in Appendix~\ref{section:Stein} it holds that : 
 $$ \|S_T-\mathcal{N}(0,\gamma^2)\|_{4,\infty} \leq \sup_{f\in \mathcal F_W^4} \big|\E\big[\gamma^2 f'(S_T)-S_Tf(S_T)\big]\big|,
 $$
 with $\gamma^2=\vartheta ^2 \mu/(1-\|\phi\|_1)$, where $\vartheta ^2=\E [X_1^2]$.
Following the lines of the proof of Theorem~\ref{th:main1} and using a Taylor expansion of order $3$ for $D_{(t,x)} f(S_T)$, we have 
$$ f\big(S_T \circ \eps_{(t,0,x)}^+\big) - f(S_T) = f'(S_T) D_{(t,x)} S_T + \frac12 f''(S_T) |D_{(t,x)} S_T|^2 + \frac16 f^{(3)}\big(\widebar{S}^{t,x}\big) (D_{(t,x)} S_T)^3,
$$
where $\widebar{S}^{t,x}$ denotes a random element between $S_T$ and $S_T \circ \eps_{(t,0,x)}^+$. Relation~\eqref{eq:temp3} then becomes
\begin{eqnarray*}
  \lefteqn{ 
    \! \! \! \! \! \! \! \! \! \! \! \! \! \! \!
    \! \! \!  \! \! \! 
    \E\left[\gamma^2 f'(S_T) - f(S_T) S_T\right] 
= \E\left[f'(S_T) \left(\gamma^2 - \int_{\real_+\times \real} z(t,x) D_{(t,x)} S_T \lambda_t dt \nu(dx)\right) \right]
  }
  \\
& & - \frac{1}{2} \E\left[\int_{\real_+\times \real} z(t,x) \lambda_t f''(S_T) | D_{(t,x)} S_T |^2 dt \nu(dx)\right] \\
& & - \frac{1}{6} \E\left[\int_{\real_+\times \real} z(t,x) \lambda_t f^{(3)}\big(\widebar{S}^{t,x}\big) (D_{(t,x)} S_T )^3 dt \nu(dx)\right]\\
&= & \E\left[f'(S_T) \left(\gamma^2 - \int_{\real_+\times \real} |z(t,x)|^2 \lambda_t dt \nu(dx)\right) \right]\\
& & - \frac{1}{2} \E\left[\int_{\real_+\times \real} z(t,x) \lambda_t f''(S_T) | D_{(t,x)} S_T |^2 dt \nu(dx)\right] \\
& & - \frac{1}{6} \E\left[\int_{\real_+\times \real} z(t,x) \lambda_t f^{(3)}\big(\widebar{S}^{t,x}\big) (D_{(t,x)} S_T )^3 dt \nu(dx)\right],
\end{eqnarray*}
where we used Relation~\eqref{eq:Ddelta} for the Malliavin derivative of $S_T$, i.e. 
\begin{equation}
\label{fjkldsf}
  D_{(t,x)} S_T = z(t,x) + \delta^N \big(z\widehat{\mathcal{Z}}^t\big), 
\end{equation} 
and Lemma \ref{lemma:lemma1}. It is important to notice that the quantity $\delta^N \big(z \widehat{\mathcal{Z}}^t\big)$ is independent of $x$ as $\lambda_r \circ \varepsilon_{(t,0,x)}^+$ is (see \ref{eq:templambdashift}), and thus we have 
\begin{align*}
  \delta^N \big(z\widehat{\mathcal{Z}}^t\big) &=\frac{1}{\sqrt{T}}\int_{(t,T]\times \real_+\times \real} y \textbf{1}_{\{\lambda_r < \theta \leq \lambda_r \circ \varepsilon_{(t,0,x)}^+\}} (N(dr,d\theta,dy)-dr d\theta \nu(dy) )
  \\
&=\frac{1}{\sqrt{T}}\int_{(t,T]\times \real_+\times \real} y \textbf{1}_{\{\lambda_r < \theta \leq \lambda_r \circ \varepsilon_{(t,0,1)}^+\}} (N(dr,d\theta,dy)-dr d\theta \nu(dy)).
\end{align*}
Using the definition of $z(t,x)$, we get 
\begin{eqnarray*}
  \lefteqn{
    \E\left[\gamma^2 f'(S_T) - f(S_T) S_T\right] 
    = \vartheta^2 \E\left[f'(S_T) \left(\frac{\mu}{1-\|\phi\|_1} - \frac{1}{T} \int_0^T \lambda_t dt \right) \right] }
\\
& & - \frac{1}{2 \sqrt{T}} \E\left[\int_0^T \int_{-\infty}^\infty x \lambda_t f''(S_T) | D_{(t,x)} S_T |^2 dt \nu(dx)\right] \\
& & - \frac{1}{6 \sqrt{T}} \E\left[\int_0^T \int_{-\infty}^\infty x \lambda_t f^{(3)}\big(\widebar{S}^{t,x}\big) (D_{(t,x)} S_T )^3 dt \nu(dx)\right]\\
&= &  \vartheta^2 \E\left[f'(S_T) \left(\frac{\mu}{1-\|\phi\|_1} - \frac{1}{T} \int_0^T \lambda_t dt \right) \right] 
 - \frac{1}{2 T^{3/2}} \int_{-\infty}^\infty x^3 \nu(dx) \E\left[\int_0^T \lambda_t f''(S_T) dt \right] \\
& & - \frac{\int_{-\infty}^\infty x \nu(dx)}{2 \sqrt{T}} \E\left[\int_0^T \lambda_t f''(S_T) \big| \delta^N \big(z\widehat{\mathcal{Z}}^t\big) \big|^2 dt \right]
 - \frac{\vartheta^2}{T} \E\left[\int_0^T \lambda_t \E_t\big[f''(S_T) \delta^N \big(z\widehat{\mathcal{Z}}^t\big)\big] dt \right] \\
& & - \frac{1}{6 \sqrt{T}} \E\left[\int_0^T \int_{-\infty}^\infty x \lambda_t f^{(3)}\big(\widebar{S}^{t,x}\big) (D_{(t,x)} S_T)^3 \nu(dx) dt \right]\\
&= &  \vartheta^2 \E\left[f'(S_T) \left(\frac{\mu}{1-\|\phi\|_1} - \frac{1}{T} \int_0^T \lambda_t dt \right) \right] \\
& & - \frac{1}{6 \sqrt{T}} \E\left[\int_0^T \int_{-\infty}^\infty x \lambda_t f^{(3)}\big(\widebar{S}^{t,x}\big) (D_{(t,x)} S_T)^3 \nu(dx) dt\right],
\end{eqnarray*}
where we used once again Lemma \ref{lemma:lemma1} and the fact that
$\int_{-\infty}^\infty x \nu(dx) = \int_{-\infty}^\infty x^3 \nu(dx)=0$. 
Hence, we have 
\begin{eqnarray}
\nonumber 
\lefteqn{
  \! \!   
  \left|\E\left[\gamma^2 f'(S_T) - f(S_T) S_T\right]\right| 
  \leq \vartheta^2 \|f'\|_\infty \left|\frac{\mu}{1-\|\phi\|_1} - \frac{1}{T} \int_0^T \E[\lambda_t] dt \right| }
\nonumber\\
& & +\frac{1}{T} \left| \E\left[f'(S_T) \int_0^T (\lambda_t-\E[\lambda_t]) dt \right]\right|  + \frac{2 \|f^{(3)}\|_\infty \int_{-\infty}^\infty x^4 \nu(dx) }{3T^2} \E\left[\int_0^T \lambda_t dt \right] \nonumber \\
& & + \frac{2 \|f^{(3)}\|_\infty \int_{-\infty}^\infty |x| \nu(dx) }{3 \sqrt{T}} \E\left[\int_0^T \lambda_t \big|\delta^N \big(z\widehat{\mathcal{Z}}^t\big)\big|^3 dt \right] \nonumber\\
&= &  \vartheta^2 \|f'\|_\infty \left|\frac{\mu}{1-\|\phi\|_1} - \frac{1}{T} \int_0^T \E[\lambda_t] dt \right| +\frac{1}{T} \left| \E\left[f'(S_T) \int_0^T (\lambda_t-\E[\lambda_t]) dt \right]\right| \nonumber\\
& & + \frac{2 \|f^{(3)}\|_\infty \int_{-\infty}^\infty x^4 \nu(dx) }{3T^2} \E\left[\int_0^T \lambda_t dt \right] + \frac{2 \|f^{(3)}\|_\infty \int_{-\infty}^\infty |x| \nu(dx) }{3 T^2} \E\left[\int_0^T |R_{t,T}|^3 \lambda_t dt \right] \nonumber\\
&=: &  A_1 + A_2 + A_3 + A_4,
\end{eqnarray} 
where we set 
$$R_{t,T}:=\sqrt{T} \delta^N \big(z\widehat{\mathcal{Z}}^t\big) = \int_{(t,T]\times \real_+\times \real} y \textbf{1}_{\{\lambda_r < \theta \leq \lambda_r \circ \varepsilon_{(t,0,1)}^+\}} \left(N(dr,d\theta,dy)-dr d\theta \nu(dy)\right).$$
  We now treat the above three terms separately.
  
\medskip

\noindent
First, note that by \cite[Lemma 4.1]{HHKR} we have $A_1 = O(T^{-1})$. In addition, as $\E\left[\int_0^T  \lambda_t dt \right] =O(T)$ and since $\E_t\left[|R_{t,T}|^3\right]$ is bounded uniformly in $t,T$ by Lemma \ref{lemma:contrhat2}, we have $A_3 + A_4= O(T^{-1})$. It remains to deal with the term $A_2$. Using \cite[Relation (14)]{Bacry_et_al_2013} and \cite[Proof of Theorem~3.10, Term $A_{1,2}$]{HHKR}, we get that 
$$ \int_0^T (\lambda_s - \E[\lambda_s]) ds = \int_0^T \psi(T-s) \mathcal M_s ds,$$
where $\mathcal M_s:= H_s- \int_0^s \lambda_u du$. Hence, we have 
\begin{eqnarray*}
  \lefteqn{ 
    \! \! \! \! \! \! \! \! \! 
    \E\left[f'(S_T) \int_0^T (\lambda_t-\E[\lambda_t]) dt \right]
 = \E\left[f'(S_T) \int_0^T \psi(T-s) \mathcal M_s ds \right]
  }
  \\
&=&\int_0^T \psi(T-s) \E\left[f'(S_T)  \mathcal M_s \right] ds \\
&=& \int_0^T \psi(T-s) \E\left[f'(S_T)  \delta^N(\mathcal{Z} \textrm{1}_{\{ \cdot \leq s \}}) \right] ds\\
&=& \int_0^T \psi(T-s) \int_0^{s} \int_{\real} \E\left[\lambda_u D_{(u,x)} f'(S_T)  \right] \nu(dx) du ds\\
&=& \int_0^T \psi(T-s) \int_0^{s} \int_{\real} \E\left[\lambda_u f''(S_T) D_{(u,x)} S_T \right] \nu(dx) du ds\\
&&+ \frac12 \int_0^T \psi(T-s) \int_0^{s} \int_{\real} \E\left[\lambda_u f^{(3)}\big(\widebar{S}^{u,x}\big) |D_{(u,x)} S_T|^2 \right] \nu(dx) du ds\\
  &=& \frac{1}{\sqrt{T}} \int_{\real} x  \nu(dx) \int_0^T \psi(T-s) \int_0^{s} \E\left[\lambda_u f''(S_T) \right] du ds
  \\
&&+ \int_0^T \psi(T-s) \int_0^{s} \int_{\real} \E\big[\lambda_u \E_u\big[f''(S_T) \delta^N\big(z \widehat{\mathcal{Z}}^u\big)\big] \big] \nu(dx) du ds\\
&&+ \frac12 \int_0^T \psi(T-s) \int_0^{s} \int_{\real} \E\left[\lambda_u f^{(3)}\big(\widebar{S}^{u,x}\big) |D_{(u,x)} S_T|^2 \right] \nu(dx) du ds\\
&=&\frac12 \int_0^T \psi(T-s) \int_0^{s} \int_{\real} \E\left[\lambda_u f^{(3)}\big(\widebar{S}^{u,x}\big) |D_{(u,x)} S_T|^2 \right] \nu(dx) du ds,
\end{eqnarray*}
as $\int_{\real} x  \nu(dx) = \E[X_1] =0$ and $\E_u\big[f''(S_T)\delta^N\big(z \widehat{\mathcal{Z}}^u\big)\big]=0$ by Lemma \ref{lemma:lemma1}, where $\widebar{S}^{u,x}$ denotes a random element between $S_T$ and ${S_T} \circ \varepsilon_{(u,0,x)}^+$. Hence, by \eqref{fjkldsf} we have 
\begin{align*}
  & \E\left[f'(S_T) \int_0^T (\lambda_t-\E[\lambda_t]) dt \right]
  \\
&=\frac12 \int_0^T \psi(T-s) \int_0^{s} \int_{\real} \E\big[\lambda_u f^{(3)}\big(\widebar{S}^{u,x}\big) |D_{(u,x)} S_T|^2 \big] \nu(dx) du ds \\
  &\leq
  \frac12 \|f^{(3)}\|_\infty
  \int_0^T \psi(T-s) \int_0^{s} \int_{\real} \E\big[\lambda_u |D_{(u,x)} S_T|^2 \big] \nu(dx) du ds \\
  &\leq
\frac{1}{T}
  \|f^{(3)}\|_\infty \|\psi\|_1 \int_{\real} x^2 \nu(dx)  \int_0^T  \E\left[\lambda_u \right] du \\
  & \quad + \|f^{(3)}\|_\infty \int_0^T \psi(T-s) \int_0^T \E\big[\lambda_u \E_u\big[\big|\delta^N\big(z\widehat{\mathcal{Z}}^u\big)\big|^2\big] \big]du ds.
\end{align*}
 Next, by the It\^o isometry, see \textit{e.g.}, \cite[Proposition 6.5.4]{Privault_LectureNotes}
  and
  (\ref{eq:inequalityhatlambda}),
 for some $C>0$ it holds that 
\begin{align*}
   \E_u\big[\big|\delta^N\big(\widehat{\mathcal{Z}}^u\big)\big|^2\big] 
   &= \frac{1}{T} \int_{\real} x^2 \nu(dx)
    \int_u^{T} \int_{0}^{+\infty} \E_u\big[\big|\textbf{1}_{\{\lambda_r < \theta \leq \lambda_r \circ \varepsilon_{(t,0,1)}^+\}}\big|^2\big]  d\theta ds \\
    &= \frac{1}{T} \int_{\real} x^2 \nu(dx)
     \int_u^{T} \E_u\big[ \widehat{\lambda}_s^u \big] ds \\
& \leq \frac{C}{T}, \qquad u\in [0,T]. 
\end{align*}
Thus, for some $C>0$ we have 
\begin{align*}
  \textrm{\rm esssup}_{T>0} \left|\E\left[f'(S_T) \int_0^T (\lambda_t-\E[\lambda_t]) dt \right]\right|&\leq C \|f^{(3)}\|_\infty \int_{\real} x^2 \nu(dx) \|\psi\|_1 \textrm{\rm esssup}_{T>0}
   \int_0^T  \frac{\E [\lambda_u ]}{T} du \\
   & \leq C,
\end{align*}  
as $\lim_{T\to+\infty} T^{-1} \int_0^T  \E\left[\lambda_u \right] du
= \mu / ( 1-\|\phi\|_1 )$,
 which shows that $A_2=O(T^{-1})$ and concludes the proof.
 
\subsection{Proof of Theorem \ref{th:alternativ}} 
 
We start by proving the upper bound on the Wasserstein distance between
the distribution of
$$
V_T:=\frac{S_T-m\int_0^T \E[\lambda_s]ds}{\sqrt T}
$$
and $\mathcal N(0,\zeta^2)$. For this, we consider the normalized martingale
$$F_t:=\frac{S_t-m\int_0^t \lambda_sds}{\sqrt t}
$$
associated to the compound process $S$, we write
\begin{align*}
    F_T&=\frac{1}{\sqrt T} \left( S_T-m\int_0^T\lambda_s ds\right)\\
    &=\frac{1}{\sqrt T} \left( S_T-mH_T\right)+m\frac{\mathcal M_T}{\sqrt T},
\end{align*}
where $\mathcal M_T := H_T- \int_0^T \lambda_u du$. Similarly, we have 
\begin{align*}
    V_T&=\frac{1}{\sqrt T}\left( S_T-m\int_0^T \E[\lambda_s]ds\right)\\
    &=\frac{1}{\sqrt T} \left( S_T-mH_T\right)+mY_T,
\end{align*}
with $Y_T:=  ( H_T-\int_0^T\E[\lambda_s]ds ) / \sqrt{T}$. 
Thus, 
\begin{align*}
    (1-\|\phi\|_1)V_T-F_T=-\|\phi\|_1 \frac{S_T-mH_T}{\sqrt T}+m(1-\|\phi\|_1)\mathfrak R_T,
\end{align*}
where we let  
$$\mathfrak R_T := Y_T-\frac{\mathcal M_T}{\sqrt T (1-\|\phi\|_1)}.$$
Next, we note that
\begin{align*}
 & \delta ^N\left(((x-m)\mathds 1_{t\leq T})_{(t,x)\in (\mathbb R_+\times \mathbb R)} \mathcal Z\right) = \int_0^T\int_{\mathbb R_+\times \mathbb R}(x-m)\textbf{1}_{\{\theta \leq \lambda_t \}}\left( N(dt,d\theta,dx)-dtd\theta \nu(dx)\right)
  \\
  & = \int_0^T\int_{\mathbb R_+\times \mathbb R}(x-m)\textbf{1}_{\{\theta \leq \lambda_t \}}N(dt,d\theta,dx)-\int_0^T\int_{\mathbb R_+\times \mathbb R}x\textbf{1}_{\{\theta \leq \lambda_t \}} dtd\theta \nu(dx)\\
    & \quad +m\int_0^T\int_{\mathbb R_+\times \mathbb R}\textbf{1}_{\{\theta \leq \lambda_t \}} dtd\theta \nu(dx)\\
   & = S_T-mH_T.
\end{align*}
Hence, using the fact that $F$ is also written as a divergence, we have 
\begin{align*}
    (1-\|\phi\|_1)V_T&=\delta^N\big(( z(t,x))_{(t,x)\in (\mathbb R_+\times \mathbb R)} \mathcal Z\big)+m(1-\|\phi\|_1)\mathfrak R_T,
\end{align*}
where
$$
z(t,x):=\frac{x+(m-x)\|\phi\|_1}{\sqrt T}\mathds 1_{[0,T]}(t). 
$$
We now proceed in the same manner as the proof of Theorem \ref{th:main2}, only by replacing $x$ by $(1-\|\phi\|_1)x+\|\phi\|_1m$. This proves that 
$\delta ^N\big (( z(t,x))_{(t,x)\in (\mathbb R_+\times \mathbb R)} \mathcal Z\big)$ converges to a centered Gaussian random variable of variance 
\begin{align*}
&\frac{\mu}{1-\|\phi\|_1}\int_{\mathbb R}((1-\|\phi\|_1)x+\|\phi\|_1m)^2\nu(dx)\\
&=\frac{\mu}{1-\|\phi\|_1}\int_{\mathbb R}((1-\|\phi\|_1)^2x^2+2(1-\|\phi\|_1) \|\phi\|_1xm+\|\phi\|_1^2m^2)\nu(dx)\\
&=\frac{\mu}{1-\|\phi\|_1}\left((1-2\|\phi\|_1+\|\phi\|_1^2)\vartheta^2 +2(1-\|\phi\|_1) \|\phi\|_1m^2+\|\phi\|_1^2m^2\right)\\
&=\frac{\mu}{1-\|\phi\|_1}\left( \vartheta^2+\|\phi\|_1(\vartheta^2-m^2) (\|\phi\|_1-2)\right)
\end{align*}
where the Wasserstein distance between the two variables is bounded by $O(T^{-1/2})$.
By proceeding as in the proof of Theorem 3.13 in \cite{HHKR} it is enough to show that $\E[\mathfrak R_T^2]=O(T^{-1})$, which is done in Lemma \ref{lemma:Rfrak},
to obtain 
$$d_W\left(V_T,\mathcal N(0,\zeta^2)\right)\leq \frac{C}{\sqrt T}.$$
Finally, by \cite[Lemma 5]{Bacry_et_al_2013}, we have 
$$ 
    V_T-\Gamma_T =\frac{m}{\sqrt T} \left( \int_0^T \E[\lambda_t]dt -\frac{\mu}{1-\|\phi\|_1}\right)
  =O(T^{-1/2}),
 $$ 
 which yields the desired result.
  
\appendix

\section{Elements of Stein's method}
\label{section:Stein}

In this section we describe elements
of Stein's method, which was has been introduced by C.M. Stein in \cite{Stein_method}, 
that are relevant to our analysis and to 
the derivation of bounds of the form \eqref{eq:Stein}.
We let $h^{(i)}$, $i\geq 1$, denotes the $i$th derivative of $h$. 
\noindent 
\begin{definition}
\label{def:W}
In what follows $\mathcal H$ denotes one of the following Hilbert spaces : 
\begin{enumerate}
\item $ \mathcal H_{W}:=\big\{h:\real \to \real \textrm{ differentiable a.e. with } \ \!  \|h'\|_\infty \leq 1\big\}$, 
\item $ \mathcal H_{4,\infty}:=\big\{h:\real \to \real \textrm{ four times differentiable a.e. with }  \displaystyle \max_{1\leq i \leq 4} \|h^{(i)}\|_\infty \leq 1 \big\}$. 
\end{enumerate}
Given $F$ and $G$ two random variables on a probability space $(\Omega, \mathcal F_\infty^N,\P)$, we let
$$ d_\mathcal{H}(F,G) :=\sup_{h \in \mathcal{H}} \left| \E[h(F)-h(G)] \right|
$$
denote the distance (with respect to the class of test functions $\mathcal H$) between the laws $\mathcal{L}_F$ and $\mathcal{L}_G$ of $F$ and $G$.
 In addition, 
\begin{enumerate}
\item if $ \mathcal H=\mathcal H_{W}$ we write $d_{W}$ for $d_{\mathcal{H}_{W}}$ and corresponds to the Wasserstein distance;  
\item if $ \mathcal H=\mathcal H_{4,\infty}$ we write $d_{4,\infty}$ for $d_{\mathcal H_{4,\infty}}$.
\end{enumerate}
 We also set 
\begin{enumerate}
\item $\mathcal F_{W}:=\big\{f:\real \to \real \textrm{ twice differentiable map }, \ \!  \|f'\|_\infty \leq 1, \ \!  \|f''\|_\infty \leq 2\big\}$, 
\item $\mathcal F_{4,\infty}:=\big\{f:\real \to \real \textrm{ four times differentiable map }, \ \!  \|f^{(i)}\|_\infty \leq 1/i, \ \!  i = 1,2,3,4 \big\}$, 
\end{enumerate}
 and 
$$\mathcal F_{\mathcal{H}} := \left\lbrace \begin{array}{l} \mathcal F_{W} \textrm{ if } \mathcal H = \mathcal H_{W},
  \medskip
  \\ \mathcal F_{4,\infty} \textrm{ if } \mathcal H = \mathcal H_{4,\infty}.
\end{array}\right.$$
\end{definition}

\noindent
Let ${\cal N}_{\sigma^2} \sim \mathcal{N}(0,\sigma^2)$,
let $\mathcal H$ be one of the spaces in 1.-2. above,
 and let $h$ in $\mathcal H$.
 C.M. Stein proved in \cite{Stein_method} (see also \cite[Lemma 2.6 and Section 4.8]{ChenGolsteinShao} for the $\mathcal H_{4,\infty}$ distance), that there exists a function $f_h$ in $\mathcal{F}_W$ solution to the functional Stein equation 
$$ h(x)-\E[h({\cal N}_{\sigma^2})] = \sigma^2 f_h'(x) - x f_h(x), \quad x \in \real. $$
 For $F$ a centered random variable,
 plugging $F$ in this equation and taking expectations, we get  
 $$ \left|\E[h(F)-h({\cal N}_{\sigma^2})]\right| = \left|\E[\sigma^2 f_h'(F) - F f_h(F)]\right|,
  $$
 which yields 
\begin{equation}
\label{eq:Stein}
d_{\mathcal H}(F,{\cal N}_{\sigma^2}) \leq \sup_{f \in \mathcal{F}_{\mathcal H}} \left|\E[\sigma^2 f'(F) - F f(F)]\right|.
\end{equation}
In addition, the right hand side is equal to $0$ if and only if $F\sim \mathcal{N}(0,\sigma^2)$.	

\subsubsection*{Competing interests}
The authors have no competing interests to declare that are relevant to the content of this article.

\subsubsection*{Data availability statement}
 No new data were created during the study.

\footnotesize

\end{document}